\makeatletter \@addtoreset{equation}{section} \makeatother
\newcommand{\eref}[1]{(\ref{#1})}
\newcommand{\tref}[1]{Theorem \ref{#1}}
\newcommand{\pref}[1]{Proposition \ref{#1}}
\newcommand{\lref}[1]{Lemma \ref{#1}}
\newcommand{\rref}[1]{Remark \ref{#1}}
\newcommand{\sref}[1]{Section \ref{#1}}
\newcommand{\cref}[1]{Corollary \ref{#1}}
\theoremstyle{plain} \newtheorem{thm}{Theorem}[section] \newtheorem{lem}{Lemma}[section] \newtheorem{prop}{Proposition}[section] \newtheorem{cor}{Corollary}[section]
\theoremstyle{definition} \newtheorem{rem}{Remark}[section] 
\title[FEM for generalized Robin BCs in curved domains]{Finite element analysis of a generalized Robin boundary value problem in curved domains based on the extension method}
\author[Takahito Kashiwabara]{Takahito Kashiwabara}
\address{Graduate School of Mathematical Sciences, The University of Tokyo, 3-8-1 Komaba, Meguro, 153-8914 Tokyo, Japan}
\email{tkashiwa@ms.u-tokyo.ac.jp}
\date{\today}
\subjclass[2020]{Primary: 65N30}%; Secondary: 35Q30.}
\keywords{Finite element method; Generalized Robin boundary condition; Domain perturbation error; Extension method; Local coordinate representation; $H^1$-stable interpolation on boundary}
\thanks{This work was supported by a Grant-in-Aid for Early-Career Scientists (No.\ 20K14357) of the Japan Society for the Promotion of Science (JSPS)}
\begin{document}
% abstract
\begin{abstract}
	A theoretical analysis of the finite element method for a generalized Robin boundary value problem, which involves a second-order differential operator on the boundary, is presented.
	If $\Omega$ is a general smooth domain with a curved boundary, we need to introduce an approximate domain $\Omega_h$ and to address issues owing to the domain perturbation $\Omega \neq \Omega_h$.
	In contrast to the transformation approach used in existing studies, we employ the extension approach, which is easier to handle in practical computation, in order to construct a numerical scheme.
	Assuming that approximate domains and function spaces are given by isoparametric finite elements of order $k$, we prove the optimal rate of convergence in the $H^1$- and $L^2$-norms.
	A numerical example is given for the piecewise linear case $k = 1$.
\end{abstract}
% title
\maketitle

% 1
\section{Introduction}
The generalized Robin boundary value problem for the Poisson equation introduced in \cite{KCDQ2015} is described by
\begin{align}
	-\Delta u &= f \quad\text{in}\quad \Omega, \label{eq1: g-robin} \\
	\frac{\partial u}{\partial \bm n} + u - \Delta_{\Gamma} u &= \tau \quad\text{on}\quad \Gamma := \partial\Omega, \label{eq2: g-robin}
\end{align}
where $\Omega \subset \mathbb R^d$ is a smooth domain, $\bm n$ is the outer unit normal to $\Gamma$, and $\Delta_{\Gamma}$ stands for the Laplace--Beltrami operator defined on $\Gamma$.
Since elliptic equations in the bulk domain and on the surface are coupled through the normal derivative, it can be regarded as one of the typical models of coupled bulk-surface PDEs, cf.\ \cite{EllRan2013}.
It is also related to problems with dynamic boundary conditions \cite{KovLub2017} or to reduced-order models for fluid-structure interaction problems \cite{CDQ2014}.

Throughout this paper, we exploit the standard notation of the Sobolev spaces in the domain and on the boundary, that is, $W^{m,p}(\Omega)$ and $W^{m,p}(\Gamma)$ (written as $H^m(\Omega)$ and $H^m(\Gamma)$ if $p = 2$), together with the non-standard ones
\begin{equation*}
	H^m(\Omega; \Gamma) := \{ v \in H^m(\Omega) \mid v|_{\Gamma} \in H^m(\Gamma) \}, \quad \|v\|_{H^m(\Omega; \Gamma)} := \|v\|_{H^m(\Omega)} + \|v\|_{H^m(\Gamma)}.
\end{equation*}
According to \cite[Section 3.1]{KCDQ2015}, the weak formulation for \eref{eq1: g-robin}--\eref{eq2: g-robin} consists in finding $u \in H^1(\Omega; \Gamma)$ such that 
\begin{equation} \label{eq: continuous problem}
	(\nabla u, \nabla v)_{\Omega} + (u, v)_{\Gamma} + (\nabla_{\Gamma}u, \nabla_{\Gamma}v)_{\Gamma} = (f, v)_{\Omega} + (\tau, v)_{\Gamma} \qquad \forall v \in H^1(\Omega; \Gamma),
\end{equation}
where $(\cdot, \cdot)_\Omega$ and $(\cdot, \cdot)_\Gamma$ denote the $L^2(\Omega)$- and $L^2(\Gamma)$-inner products respectively, and $\nabla_\Gamma$ stands for the surface gradient along $\Gamma$.
It is shown in \cite{KCDQ2015} that this problem admits the following regularity structure for some constant $C > 0$:
\begin{equation*}
	\|u\|_{H^m(\Omega; \Gamma)} \le C (\|f\|_{H^{m-2}(\Omega)} + \|\tau\|_{H^{m-2}(\Gamma)}) \qquad (m = 2, 3, \dots).
\end{equation*}
Moreover, the standard finite element analysis is shown to be applicable, provided that either $\Omega$ is a polyhedral domain and \eref{eq2: g-robin} is imposed on a whole edge or face in $\Gamma$, or $\Omega$ is smooth and can be exactly represented in the framework of the isogeometric analysis.

For a more general smooth domain, a feasible setting is to exploit the $\mathbb P_k$-isoparametric finite element method, in which $\Gamma = \partial\Omega$ is approximated by piecewise polynomial (of degree $k$) boundary $\Gamma_h = \partial\Omega_h$.
Because the approximate domain $\Omega_h$ does not agree with $\Omega$, its theoretical analysis requires estimation of errors owing to the discrepancy of the two domains, i.e., the domain perturbation.

Such an error analysis is presented by \cite{KovLub2017} in a time-dependent case for $k = 1$ and by \cite{Ede2021} for $k \ge 1$, based on the \emph{transformation method}.
The name comes from the fact that they introduce a bijection $L_h : \Omega_h \to \Omega$ and ``lift'' a function $v : \Omega_h \to \mathbb R$ to $v \circ L_h^{-1} : \Omega \to \mathbb R$ defined in $\Omega$, thus transforming all functions so that they are defined in the original domain $\Omega$.
In this setting, the finite element scheme reads, with a suitable choice of the finite element space $V_h \subset H^1(\Omega_h; \Gamma_h)$: find $u_h \in V_h$ such that
\begin{equation} \label{eq: scheme by transformation method}
	(\nabla u_h, \nabla v_h)_{\Omega_h} + (u_h, v_h)_{\Gamma_h} + (\nabla_{\Gamma_h}u_h, \nabla_{\Gamma_h}v_h)_{\Gamma_h} = (f^{-l}, v_h)_{\Omega_h} + (\tau^{-l}, v_h)_{\Gamma_h} \qquad \forall v_h \in V_h,
\end{equation}
where $f^{-l} = f \circ L_h$ and $\tau^{-l} = \tau \circ L_h$ mean the inverse lifts of $f$ and $\tau$ respectively.
Then the error between the approximate and exact solutions are defined as $u - u_h^l$ on $\Omega$ with $u_h^l := u_h \circ L_h^{-1}$.
It is theoretically proved by \cite{Len86} and \cite{Ber1989} that such a transformation $L_h$ indeed exists.
However, from the viewpoint of practical computation, it does not seem easy to construct $L_h$ for general domains in a concrete way.
Therefore, it is non-trivial to numerically compute $f^{-l}, \tau^{-l}$, and $u - u_h^l$.

There is a more classical and direct approach to treat the situation $\Omega \neq \Omega_h$, which we call the \emph{extension method} (see e.g.\ \cite[Section 4.5]{Cia78} and \cite{BaEl88}; a more recent result is found in \cite{ChiSai2023}).
Namely, we extend $f$ and $\tau$ to some $\tilde f$ and $\tilde\tau$ which are defined in $\mathbb R^d$, preserving their smoothness (this can be justified by the Sobolev extension theorem or the trace theorem).
Then the numerical scheme reads: find $u_h \in V_h$ such that
\begin{equation*}
	(\nabla u_h, \nabla v_h)_{\Omega_h} + (u_h, v_h)_{\Gamma_h} + (\nabla_{\Gamma_h}u_h, \nabla_{\Gamma_h}v_h)_{\Gamma_h} = (\tilde f, v_h)_{\Omega_h} + (\tilde\tau, v_h)_{\Gamma_h} \qquad \forall v_h \in V_h,
\end{equation*}
and the error is defined as $\tilde u - u_h$ in the approximate domain $\Omega_h$.
If $f$ and $\tau$ are given as entire functions, which is often the case in practical computation, then no special treatment for them is needed.
Moreover, when computing errors numerically for verification purposes, it is usual to calculate $\tilde u - u_h$ in the computational domain $\Omega_h$ rather than $u - u_h^l$ in $\Omega$ simply because the former is easier to deal with.

In view of these situations, we aim to justify the use of the extension method for problem \eref{eq1: g-robin}--\eref{eq2: g-robin} in the present paper.
Considering $\Omega_h$ which approximates $\Omega$ by the $\mathbb P_k$-isoparametric elements, we establish in \sref{sec: error estimate} the following error estimates as the main result:
\begin{equation*}
	\|\tilde u - u_h\|_{H^1(\Omega_h; \Gamma_h)} \le O(h^{k}), \qquad \|\tilde u - u_h\|_{L^2(\Omega_h; \Gamma_h)} \le O(h^{k+1}).
\end{equation*}
They do not follow from the results of \cite{KovLub2017} or \cite{Ede2021} directly since we need to estimate errors caused from a transformation that are absent in the transformation method.
In addition, there is a completely non-trivial point that is specific to the boundary condition \eref{eq2: g-robin}:
even if $u \in H^2(\Omega)$ with $u|_\Gamma \in H^2(\Gamma)$, we may have only $\tilde u|_{\Gamma_h} \in H^{3/2}(\Gamma_h)$, which could cause loss in the rate of convergence on the boundary.
To overcome this technical difficulty, a delicate analysis of interpolation errors on $\Gamma_h$, including the use of the Scott--Zhang interpolation operator on the boundary, is necessary as presented in \sref{sec: FEM}.

There is another delicate point when comparing a quantity defined in $\Gamma_h$ with that in $\Gamma$.
For simplicity in the explanation, let $\Gamma_h$ be given as a piecewise linear ($k = 1$) approximation to $\Gamma$.
If $d = 2$ and every node (vertex) of $\Gamma_h$ lies exactly on $\Gamma$, then the orthogonal projection $\bm p: \Gamma \to \Gamma_h$ is bijective and it is reasonable to set a local coordinate along each boundary element $S \in \mathcal S_h$ (see Subsection \ref{subsec: approximate domains} for the notation).
Namely, $S$ and $\bm p^{-1}(S)$ are represented as graphs $(y_1, 0)$ and $(y_1, \varphi(y_1))$ respectively with a local coordinate $(y_1, y_2)$.

However, if nodes do not belong to $\Gamma$, then $\bm p$ is no longer injective (see Figure \ref{fig1}).
Furthermore, for $d \ge 3$ the same situation necessarily occurs---no matter if boundary nodes are in $\Gamma$ or not---since $\partial S$ (its dimension is $\ge 1$) is not exactly contained in $\Gamma$.
Consequently, it is inconsistent in general to assume the following simultaneously:
\begin{enumerate}[label=(\roman{*})]
	\item each $S \in \mathcal S_h$ has one-to-one correspondence to some subset $\Gamma_S \subset \Gamma$;
	\item both $S$ and $\Gamma_S$ admit graph representations in some rotated cartesian coordinate, whose domains of definition are the same;
	\item $\Gamma = \bigcup_{S \in \mathcal S_h} \Gamma_S$ is a disjoint union, that is, $\{\Gamma_S\}_{S \in \mathcal S_h}$ forms an exact partition of $\Gamma$.
\end{enumerate}
We remark that this inconsistency is sometimes overlooked in literature considering $\Omega \neq \Omega_h$.

\begin{figure}[htbp] \label{fig1}
	\centering
	\includegraphics[width=7cm]{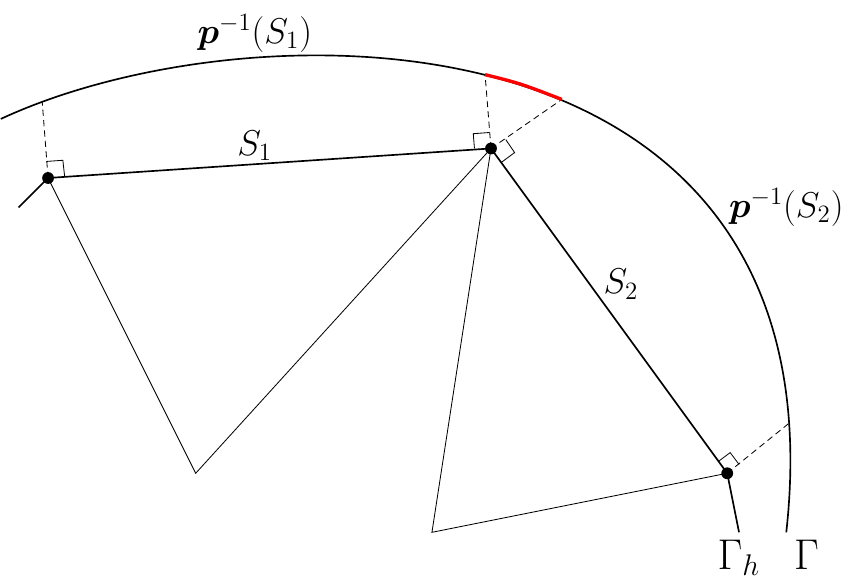}
	\hspace{1cm}
	\includegraphics[width=7cm]{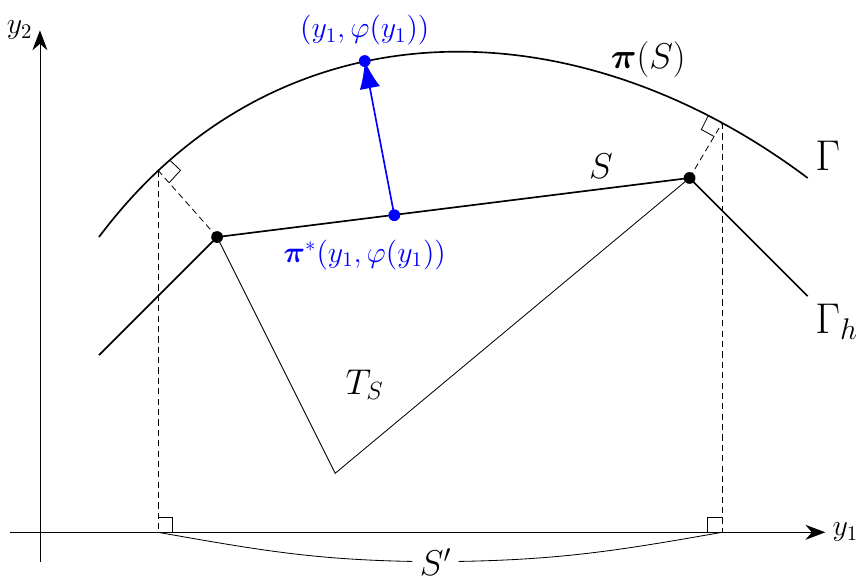}
	\caption{$\Gamma$ and $\Gamma_h$ for $d = 2$ and $k = 1$.
	Left: if $\partial S \not\subset \Gamma$, $\bm p$ is not injective (in the red part) and property (iii) fails to hold.
	Right: $\bm\pi(S)$ and $S$ are parametrized over the common domain $S'$. The representation of $\bm\pi(S)$ is a graph but that of $S$ is not.}
\end{figure}

To address the issue, we utilize the orthogonal projection $\bm\pi : \Gamma_h \to \Gamma$ (its precise definition is given in Subsection \ref{subsec: local coordinate}) instead of $\bm p$.
This map is bijective as long as $\Gamma_h$ is close enough to $\Gamma$, so that properties (i) and (iii) hold with $\Gamma_S = \bm\pi(S)$.
Then we set a local coordinate along $\bm\pi(S)$ and parametrize $S$ through $\bm\pi$ with the same domain as in Figure \ref{fig1}, avoiding the inconsistency above (we do not rely on a graph representation of $S$ in evaluating surface integrals etc.).

Finally, in Appendix \ref{sec: estimate in exact domain}, considering the so-called natural extension of $u_h$ to $\Omega$ denoted by $\bar u_h$, we also prove that $u - \bar u_h$ converges to $0$ at the optimal rate in $H^1(\Omega; \Gamma)$ and $L^2(\Omega; \Gamma)$ (actually there is some abuse of notation here; see \rref{rem: abuse of notation}).
This result may be regarded as an extension of \cite[Section 4.2.3]{Ric2017}, which discussed a Dirichlet problem for $d = 2$, to a more general setting.
Whereas it is of interest mainly from the mathematical point of view, it justifies calculating errors in approximate domains $\Omega_h, \Gamma_h$ based on extensions to estimate the rate of convergence in the original domains $\Omega, \Gamma$.

% 2
\section{Approximation and perturbation of domains}
% 2.1
\subsection{Assumptions on $\Omega$}
Let $\Omega \subset \mathbb R^d \, (d \ge 2)$ be a bounded domain of $C^{k+1,1}$-class ($k \ge 1$), with $\Gamma := \partial\Omega$.
Then there exist a system of local coordinates $\{(U_r, \bm y_r, \varphi_r)\}_{r=1}^M$ such that $\{U_r\}_{r=1}^M$ forms an open covering of $\Gamma$, 
$\bm y_r = {}^t(y_{r1}, \dots, y_{rd-1}, y_{rd}) = {}^t(\bm y_r', y_{rd})$ is a rotated coordinate of $\bm x$, and $\varphi_r: \Delta_r \to \mathbb R$ gives a graph representation $\bm \Phi_r(\bm y_r') := {}^t(\bm y_r', \varphi_r(\bm y_r'))$ of $\Gamma \cap U_r$, where $\Delta_r$ is an open cube in $\mathbb R^{N-1}$. 
Because $C^{k, 1}(\Delta_r) = W^{k+1, \infty}(\Delta_r)$, we may assume that
\begin{equation*}
	\|(\nabla')^{m} \varphi_r\|_{L^\infty(\Delta')} \le C \quad (m = 0, \dots, k+1, \; r = 1, \dots, M)
\end{equation*}
for some constant $C > 0$, where $\nabla'$ means the gradient with respect to $\bm y_r'$.

We also introduce a notion of tubular neighborhoods $\Gamma(\delta) := \{x\in\mathbb R^N \,:\, \operatorname{dist}(x, \Gamma) \le \delta\}$.
It is known that (see \cite[Section 14.6]{GiTr98}) there exists $\delta_0>0$, which depends on the $C^{1,1}$-regularity of $\Omega$, such that each $\bm x \in \Gamma(\delta_0)$ admits a unique representation
\begin{equation*}
	\bm x = \bar{\bm x} + t \bm n(\bar{\bm x}), \qquad \bar{\bm x} \in \Gamma, \, t \in [-\delta_0, \delta_0].
\end{equation*}
We denote the maps $\Gamma(\delta_0)\to \Gamma$; $\bm x\mapsto\bar{\bm x}$ and $\Gamma(\delta_0)\to \mathbb R$; $\bm x \mapsto t$ by $\bm\pi(\bm x)$ and $d(\bm x)$, respectively
(actually, $\bm\pi$ is an orthogonal projection to $\Gamma$ and $d$ agrees with the signed-distance function).
The regularity of $\Omega$ is transferred to that of $\bm\pi$, $d$, and $\bm n$ (cf.\ \cite[Section 7.8]{DeZo11}).
In particular, $\bm n \in \bm C^{k, 1}(\Gamma)$.

% 2.2
\subsection{Assumptions on approximate domains} \label{subsec: approximate domains}
We make the following assumptions (H1)--(H8) on finite element partitions and approximate domains.
First we introduce a regular family of triangulations $\{\tilde{\mathcal T}_h\}_{h \downarrow 0}$ of \emph{straight $d$-simplices} and define the set of nodes corresponding to the standard $\mathbb P_k$-finite element.
\begin{enumerate}[label=(H\arabic*)]
	\item Every $T \in \mathcal{\tilde T}_h$ is affine-equivalent to the standard closed simplex $\hat T$ of $\mathbb R^d$, 
	via the isomorphism $\tilde{\bm F}_{T}(\hat{\bm x}) = B_{T} \hat{\bm x} + \bm b_{T}$.
	The set $\tilde{\mathcal T}_h$ is mutually disjoint, that is, the intersection of every two different elements is either empty or agrees with their common face of dimension $\le d - 1$.
	
	\item $\{\tilde{\mathcal T}_h\}_{h \downarrow 0}$ is regular in the sense that
	\begin{equation*}
		h_T \le C \rho_T \quad (\forall h > 0, \, \forall T \in \mathcal T_h),
	\end{equation*}
	where $h_T$ and $\rho_T$ stand for the diameter of the smallest ball containing $T$ and that of the largest ball contained $T$, respectively.
	
	\item We let $\hat\Sigma_k = \{\hat{\bm a}_i\}_{i=1}^{N_k}$ denote the nodes in $\hat T$ of the continuous $\mathbb P_k$-finite element (see e.g.\ \cite[Section 2.2]{Cia78}).
	The nodal basis functions $\hat\phi_i \in \mathbb P_k(\hat T)$, also known as the shape functions, are then defined by $\hat\phi_i(\hat{\bm a}_j) = \delta_{ij}$ (the Kronecker delta) for $i, j = 1, \dots, N_k$.
\end{enumerate}

\begin{rem}
	If $\hat T$ is chosen as the standard $d$-simplex, i.e., $\hat T = \{(\hat x_1, \dots, \hat x_d) \in \mathbb R^d \mid x_1 \ge 0, \dots, x_d \ge 0, \hat x_1 + \cdots + \hat x_d \le 1\}$, then the standard position of the nodes for the $\mathbb P_k$-finite element is specified as $\hat\Sigma_k = \{ (\hat i_1/k, \dots, \hat i_d/k) \in \hat T \mid \hat i_1, \dots, \hat i_d \in \mathbb N_{\ge 0} \}$.
\end{rem}

We now introduce a partition into $\mathbb P_k$-isoparametric finite elements, denoted by $\mathcal T_h$, from $\tilde{\mathcal T}_h$, which results in approximate domains $\Omega_h$.
We assume that $\Omega_h$ is a perturbation of a polyhedral domain.
\begin{enumerate}[label=(H\arabic*)]
	\setcounter{enumi}{3}
	\item \label{H4}
	For $\tilde T \in \tilde{\mathcal T}_h$ we define a parametric map $\bm F \in [\mathbb P_k(\hat T)]^d$ by
	\begin{equation*}
		\bm F(\hat{\bm x}) = \sum_{i=1}^{N_k} \bm a_i \hat\phi_i(\hat{\bm x}),
	\end{equation*}
	where the ``mapped nodes'' $\bm a_i \in \mathbb R^d \, (i = 1, \dots, N_k)$ satisfy
	\begin{equation*}
		|\bm a_i - \bm F_{\tilde T}(\hat{\bm a}_i)| \le Ch_{\tilde T}^2.
	\end{equation*}
	If $h_{\tilde T}$ is small such $\bm F$ becomes diffeomorphic on $\hat T$ (see \cite[Theorem 3]{CiaRav1972}), and we set $T := \bm F(\hat T)$.
	For convenience in the notation, henceforth we write $\bm F$ as $\bm F_T$, $\tilde{\bm F}_{\tilde T}$ as $\tilde{\bm F}_T$, and $h_{\tilde T}$ as $h_T$.
	
	\item The partition $\mathcal T_h$ is defined as the set of $T$ constructed above.
	We define $\Omega_h$ to be the interior of the union of $\mathcal T_h$; in particular, $\overline\Omega_h = \bigcup_{T \in \mathcal T_h} T$.
	
	\item \label{H6}
	$\{\mathcal T_h\}_{h \downarrow 0}$ is regular of order $k$ in the sense of \cite[Definition 3.2]{Ber1989}, that is,
	\begin{equation*}
		\|\nabla_{\hat{\bm x}}^m \bm F_T\|_{L^\infty(\hat T)} \le C\|B_T\|_{\mathcal L(\mathbb R^d, \mathbb R^d)}^m \le Ch_T^m
			\qquad (T \in \mathcal T_h, \quad m = 2, \dots, k+1),
	\end{equation*}
	where $C$ is independent of $h$ (if $m = k + 1$ the left-hand side is obviously $0$).
\end{enumerate}
\begin{rem} \label{rem: after regularity of order k}
	(i) Throughout this paper, we assume without special emphasis that $h$ is sufficiently small; especially that $h \le 1$.

	(ii) \ref{H6} automatically holds if $\bm F_T$ is an $O(h^k)$-perturbation of $\tilde{\bm F}_T$ (see \cite[p.\ 239]{CiaRav1972}).
	It is a reasonable assumption for $k = 2$, but is not compatible with \ref{H8} below for $k \ge 3$, which is why we presume \ref{H6} independently.
	
	(iii) \cite{Len86} presented a procedure to construct $\mathcal T_h$ satisfying \ref{H4}--\ref{H6} for general $d$ and $k$, which is done inductively on $k$.
	In order to get, e.g., cubic isoparametric partitions with regularity of order 3, one needs to know a quadratic partition of order 2 in advance.
	Then, a kind of perturbation is added to the quadratic map to satisfy the condition of order 3 (see \cite[eq.\ (22)]{Len86}).
	
	(iv) As a result of \ref{H4}--\ref{H6}, for $T \in \mathcal T_h$ we have (see \cite[Theorems 3 and 4]{CiaRav1972} and \cite[Theorem 1]{Len86}):
	\begin{gather*}
		\|\nabla_{\hat{\bm x}} \bm F_T\|_{L^\infty(\hat T)} \le C\|B_T\|_{\mathcal L(\mathbb R^d, \mathbb R^d)} \le Ch_T, \\
		C_1 h_T^d \le |\operatorname{det} (\nabla_{\hat{\bm x}} \bm F_T)| \le C_2 h_T^d, \\
		\|\nabla_{\bm x}^m \bm F_T^{-1}\|_{L^\infty(T)} \le C \|B_T^{-1}\|_{\mathcal L(\mathbb R^d, \mathbb R^d)}^m \le C h_T^{-m} \quad (m = 1, \dots, k+1).
	\end{gather*}
\end{rem}

We next introduce descriptions on boundary meshes.
Setting $\Gamma_h := \partial\Omega_h$, we define the boundary mesh $\mathcal S_h$ inherited from $\mathcal T_h$ by
\begin{equation*}
	\mathcal S_h = \{ S \subset \Gamma_h \mid \text{$S = \bm F_T(\hat S)$ for some $T \in \mathcal T_h$, where $\hat S \subset \partial\hat T$ is a $(d-1)$-face of $\hat T$} \}.
\end{equation*}
Then we have $\Gamma_h = \bigcup_{S \in \mathcal T_h} S$ (disjoint union).
Each boundary element $S \in \mathcal S_h$ admits a unique $T \in \mathcal T_h$ such that $S \subset \partial T$, which is denoted by $T_S$.
We let $\bm b_r : U_r \to \mathbb R^{d-1}; {}^t(\bm y_r', y_{rd}) \mapsto \bm y_r'$ denote the projection to the base set.
Let us now assume that $\Omega$ is approximated by $\Omega_h$ in the following sense.
\begin{enumerate}[label=(H\arabic*)]
	\setcounter{enumi}{6}
	\item \label{asmp: omit r}
	$\Gamma_h$ is covered by $\{U_r\}_{r=1}^M$, and each portion $\Gamma_h \cap U_r$ is represented as a graph $(\bm y_r', \varphi_{rh}(\bm y_r'))$, where $\varphi_{rh}$ is a continuous function defined in $\overline{\Delta_r}$.
	Moreover, each $S \in \mathcal S_h$ is contained in some $U_r$.
	We fix such $r$ and agree to omit the subscript $r$ for simplicity when there is no fear of confusion.
	
	\item \label{H8} The restriction of $\varphi_{rh}$ to $\bm b_r(S)$ for each $S \in \mathcal S_h$ is a polynomial function of degree $\le k$.
	Moreover, $\varphi_{rh}$ approximates $\varphi_r$ as accurately as a general $\mathbb P_k$-interpolation does; namely, we assume that
	\begin{align}
		\|\varphi_r - \varphi_{rh}\|_{L^\infty(\bm b_r(S))} &\le Ch_{S}^{k+1} =: \delta_S, \label{eq: phi - phih} \\
		\|(\nabla')^m (\varphi_r - \varphi_{rh})\|_{L^\infty(\bm b_r(S))} &\le Ch_{S}^{k+1-m} \qquad (m = 1, \dots, k + 1), \label{eq: derivatives of phi - phih}
	\end{align}
	where the boundary mesh size is defined as $h_S := h_{T_S}$.
\end{enumerate}

These assumptions essentially imply that the local coordinate system for $\Omega$ is compatible with $\{\Omega_h\}_{h \downarrow 0}$ and that $\Gamma_h$ is a piecewise $\mathbb P_k$ interpolation of $\Gamma$.
Setting $\delta := \max_{S \in \mathcal S_h} \delta_S$, we have $\operatorname{dist}(\Gamma, \Gamma_h) \le \delta < \delta_0$ if $h$ is sufficiently small, so that $\bm\pi$ is well-defined on $\Gamma_h$.

% 2.3
\subsection{Local coordinates for $\Gamma$ and $\Gamma_h$} \label{subsec: local coordinate}
In \cite[Proposition 8.1]{KOZ16}, we proved that $\bm\pi|_{\Gamma_h}$ gives a homeomorphism (and element-wisely a diffeomorphism) between $\Gamma$ and $\Gamma_h$ provided $h$ is sufficiently small, taking advantage of the fact that $\Gamma_h$ can be regarded as a $\mathbb P_k$-interpolation of $\Gamma$ (there we assumed $k = 1$, but the method can be easily adapted to general $k \ge 1$).
If we write its inverse map $\bm\pi^* : \Gamma\to\Gamma_h$ as $\bm\pi^*(\bm x) = \bar{\bm x} + t^*(\bar{\bm x}) \bm n(\bar{\bm x})$, then $t^*$ satisfies (cf.\ \cite[Proposition 8.2]{KOZ16})
\begin{equation} \label{eq: global t*}
	\| t^*\|_{L^\infty(\Gamma)} \le \delta, \qquad \|\nabla_\Gamma^m t^*\|_{L^\infty(\Gamma)} \le Ch^{k+1-m} \quad (m = 1, \dots, k + 1),
\end{equation}
corresponding to \eref{eq: phi - phih} and \eref{eq: derivatives of phi - phih}.
Here, $\nabla_\Gamma$ means the surface gradient along $\Gamma$ and the constant depends only on the $C^{1,1}$-regularity of $\Omega$.
This in particular implies that $\Omega_h\triangle\Omega := (\Omega_h\setminus\Omega) \cup (\Omega\setminus\Omega_h)$ and $\Gamma_h \cup \Gamma$ are contained in $\Gamma(\delta)$.
We refer to $\Omega_h\triangle\Omega$, $\Gamma(\delta)$ and their subsets as \emph{boundary-skin layers} or more simply as \emph{boundary skins}.

For $S\in \mathcal S_h$, we may assume that $S \cup \bm\pi(S)$ is contained in some local coordinate neighborhood $U_r$.
As announced in \ref{asmp: omit r} above, we will omit the subscript $r$ in the subsequent argument.
We define
\begin{equation*}
	S' := \bm b(\bm\pi(S)) \quad (\text{note that it differs from $\bm b(S)$})
\end{equation*}
to be the common domain of parameterizations of $\bm\pi(S) \subset \Gamma$ and $S \subset \Gamma_h$.
In fact, $\bm\Phi : S' \to \bm\pi(S)$ and $\bm\Phi_{h} := \bm\pi^* \circ \bm\Phi : S' \to S$ constitute smooth (at least $C^{k,1}$) bijections.
We then obtain $\bm\pi^*(\bm\Phi(\bm z')) = \bm\Phi(\bm z') + t^*(\bm\Phi(\bm z')) \bm n(\bm\Phi(\bm z'))$ for $\bm z' \in S'$ and
\begin{equation*}
	\|t^* \circ \bm\Phi\|_{L^\infty(S')} \le \delta_S, \qquad \|(\nabla')^m (t^* \circ \bm\Phi)\|_{L^\infty(S')} \le Ch_{S}^{k+1-m} \quad (m = 1, \dots, k + 1),
\end{equation*}
which are localized versions of \eref{eq: global t*}.

Let us represent integrals associated with $S$ in terms of the local coordinates introduced above.
First, surface integrals along $\bm\pi(S)$ and $S$ are expressed as
\begin{align*}
	\int_{\bm\pi(S)} v \, d\gamma = \int_{S'} v(\bm\Phi(\bm y')) \sqrt{\operatorname{det} G(\bm y')} \, d\bm y', \qquad \int_{S} v \, d\gamma_h = \int_{S'} v(\bm\Phi_h(\bm y')) \sqrt{\operatorname{det} G_h(\bm y')} \, d\bm y',
\end{align*}
where $G$ and $G_h$ denote the Riemannian metric tensors obtained from the parameterizations $\bm\Phi$ and $\bm\Phi_h$, respectively.
Namely, for tangent vectors $\bm g_\alpha := \frac{\partial \bm\Phi}{\partial z_\alpha}$ and $\bm g_{h, \alpha} := \frac{\partial \bm\Phi_h}{\partial z_\alpha}$ $(\alpha = 1, \dots, d-1)$, the components of and $G$ and $G_h$, which are $(d-1)\times(d-1)$ matrices, are given by
\begin{equation*}
	G_{\alpha \beta} = \bm g_\alpha \cdot \bm g_\beta, \qquad G_{h, \alpha \beta} = \bm g_{h, \alpha} \cdot \bm g_{h, \beta}.
\end{equation*}
The contravariant components of the metric tensors and the contravariant vectors on $\Gamma$ are defined as
\begin{equation*}
	G^{\alpha \beta} = (G^{-1})_{\alpha \beta}, \qquad \bm g^{\alpha} = \sum_{\beta=1}^{d-1} G^{\alpha \beta} \bm g_\beta,
\end{equation*}
together with their counterparts $G_h^{\alpha, \beta}$ and $\bm g_h^\alpha$ on $\Gamma_h$.
Then the surface gradients along $\Gamma$ and $\Gamma_h$ can be represented in the local coordinate as (see \cite[Lemma 2.1]{KCDQ2015})
\begin{equation} \label{eq: local representation of surface gradient}
	\nabla_{\Gamma} = \sum_{\alpha=1}^{d-1} \bm g^\alpha \frac{\partial}{\partial z_\alpha}, \qquad \nabla_{\Gamma_h} = \sum_{\alpha=1}^{d-1} \bm g_h^\alpha \frac{\partial}{\partial z_\alpha}.
\end{equation}

In the same way as we did in \cite[Theorem 8.1]{KOZ16}, we can show $\|\bm g_\alpha - \bm g_{h,\alpha}\|_{L^\infty(S')} \le Ch_S^k$ and $\|G_{\alpha\beta} - G_{h, \alpha \beta}\|_{L^\infty(S')} \le C\delta_S$.
We then have $\|G^{\alpha\beta} - G_h^{\alpha \beta}\|_{L^\infty(S')} \le C\delta_S$, because
\begin{equation*}
	G_h^{-1} - G^{-1} = G^{-1}\underbrace{(G_h - G)}_{=O(\delta_S)}G_h^{-1}.
\end{equation*}
Note that the stability of $G_h^{-1}$ follows from the representation $G_h = G (I + G^{-1} X)$, with $X = G_h - G$ denoting a perturbation, together with a Neumann series argument.
As a result, one also gets an error estimate for contravariant vectors, i.e., $\|\bm g^\alpha - \bm g_h^\alpha\|_{L^\infty(S')} \le Ch_S^k$.

Derivative estimates for metric tensors and vectors can be derived as well for $m = 1, \dots, k$:
\begin{equation} \label{eq: derivative of G - Gh}
\begin{aligned}
	\|G_{\alpha\beta} - G_{h, \alpha \beta}\|_{W^{m, \infty}(S')} &\le Ch_S^{k-m}, \qquad \|G^{\alpha\beta} - G_h^{\alpha \beta}\|_{W^{m, \infty}(S')} \le Ch_S^{k-m}, \qquad \\
	\|\bm g_\alpha - \bm g_{h,\alpha}\|_{W^{m, \infty}(S')} &\le Ch_S^{k-m}, \qquad \|\bm g^\alpha - \bm g^{h,\alpha}\|_{W^{m, \infty}(S')} \le Ch_S^{k-m}.
\end{aligned}
\end{equation}

Next, let $\bm\pi(S, \delta) := \{\bar{\bm x} + t \bm n(\bar{\bm x}) \mid \bar{\bm x} \in \bm\pi(S), \; -\delta\le t\le \delta \}$ be a tubular neighborhood with the base $\bm\pi(S)$, and consider volume integrals over $\bm\pi(S, \delta)$.
To this end we introduce a one-to-one transformation $\bm\Psi: S'\times [-\delta, \delta] \to \bm\pi(S, \delta)$ by
\begin{equation*}
	\bm x = \bm\Psi(\bm z', t) := \bm\Phi(\bm z') + t \bm n(\bm\Phi(\bm z')) \Longleftrightarrow \bm z' = \bm b(\bm\pi(\bm x)), \; t = d(\bm x),
\end{equation*}
where we recall that $\bm b : \mathbb R^d \to \mathbb R^{d-1}$ is the projection.
Then, by change of variables, we obtain
\begin{equation*}
	\int_{\bm \pi(S,\delta)} v(\bm x) \, d\bm x = \int_{S' \times [-\delta, \delta]} v(\bm\Psi(\bm z', t)) |\operatorname{det} J(z', t)| \, d\bm z' dt,
\end{equation*}
where $J := \nabla_{(\bm z', t)} \bm\Psi$ denotes the Jacobi matrix of $\bm\Psi$.
In the formulas above, $\operatorname{det}G$, $\operatorname{det}G_h$, and $\operatorname{det}J$ can be bounded, from above and below, by positive constants depending on the $C^{1,1}$-regularity of $\Omega$, provided $h$ is sufficiently small.
In particular, we obtain the following equivalence estimates:
\begin{align}
	&C_1 \int_{\bm\pi(S)} |v| \, d\gamma \le \int_{S'} |v \circ \bm\Phi| \, d\bm z' \le C_2 \int_{\bm\pi(S)} |v| \, d\gamma, \label{eq: local surface integral on Gamma} \\
	&C_1 \int_{S} |v| \, d\gamma_h \le \int_{S'} |v \circ \bm\Phi_h| \, d\bm z' \le C_2 \int_{S} |v| \, d\gamma_h, \label{eq: local surface integral on Gammah} \\
	&C_1 \int_{\bm\pi(S, \delta)} |v| \, d\bm x \le \int_{S' \times [-\delta, \delta]} |v \circ \bm\Psi| \,d\bm z' dt  \le C_2 \int_{\bm\pi(S, \delta)} |v| \, d\bm x. \label{eq: local tubular neighborhood equivalence}
\end{align}
We remark that the width $\delta$ in \eref{eq: local tubular neighborhood equivalence} may be replaced with arbitrary $\delta' \in [\delta_S, \delta]$.

We also state an equivalence relation between $W^{m,p}(\Gamma)$ and $W^{m,p}(\Gamma_h)$ when the transformation $\bm\pi$ is involved.
\begin{lem} \label{lem: equivalence of Sobolev spaces on Gamma and Gammah}
	Let $m =0, \dots, k + 1$ and $1\le p\le \infty$.
	For $S \in \mathcal S_h$ and $v \in W^{m, p}(\bm\pi(S))$, we have
	\begin{align}
		C_1 \|v\|_{L^p(\bm\pi(S))} &\le \|v \circ \bm\pi\|_{L^p(S)} \le C_2 \|v\|_{L^p(\bm\pi(S))}, \label{eq1: equivalence of surface integrals} \\
		C_1 \|\nabla_{\Gamma} v\|_{L^p(\bm\pi(S))} &\le \|\nabla_{\Gamma_h} (v \circ \bm\pi)\|_{L^p(S)} \le C_2 \|\nabla_{\Gamma} v\|_{L^p(\bm\pi(S))}, \label{eq2: equivalence of surface integrals} \\
		C_1 \|v\|_{W^{m, p}(\bm\pi(S))} &\le \|v \circ \bm\pi\|_{W^{m, p}(S)} \le C_2 \|v\|_{W^{m, p}(\bm\pi(S))} \quad (m \ge 2). \label{eq3: equivalence of surface integrals}
	\end{align}
\end{lem}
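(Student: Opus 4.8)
The plan is to reduce all three estimates to the common parameter domain $S'$ and to exploit the identity $\bm\pi \circ \bm\Phi_h = \bm\Phi$. Indeed, since $\bm\Phi_h = \bm\pi^* \circ \bm\Phi$ and $\bm\pi^*$ is by construction the inverse of $\bm\pi|_{\Gamma_h}$, we have $\bm\pi \circ \bm\Phi_h = \bm\pi\circ\bm\pi^*\circ\bm\Phi = \bm\Phi$ on $S'$. Consequently the two functions under comparison share a single pullback,
\[
	V := v\circ\bm\Phi = (v\circ\bm\pi)\circ\bm\Phi_h : S' \to \mathbb R,
\]
so that $v$ on $\bm\pi(S)$ and $v\circ\bm\pi$ on $S$ are, in their respective local coordinates, literally the same function of $\bm z'$. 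This is what turns all three estimates into statements about the geometric factors ($\det G$, $\det G_h$, $G^{\alpha\beta}$, $G_h^{\alpha\beta}$) alone, which are uniformly controlled by the $C^{1,1}$-regularity of $\Omega$.

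For \eref{eq1: equivalence of surface integrals} I would apply \eref{eq: local surface integral on Gamma} to $|v|^p$ and \eref{eq: local surface integral on Gammah} to $|v\circ\bm\pi|^p$; both right-hand sides equal $\int_{S'}|V|^p\,d\bm z'$, so the two $L^p$-norms are each comparable to $\|V\|_{L^p(S')}$, hence to one another (for $p=\infty$ the same conclusion holds because $\bm\Phi$ and $\bm\Phi_h$ are bi-Lipschitz). For \eref{eq2: equivalence of surface integrals} I would pull back the local representation \eref{eq: local representation of surface gradient}: along the charts, $(\nabla_\Gamma v)\circ\bm\Phi = \sum_\alpha \bm g^\alpha\,\partial_\alpha V$ and $(\nabla_{\Gamma_h}(v\circ\bm\pi))\circ\bm\Phi_h = \sum_\alpha \bm g_h^\alpha\,\partial_\alpha V$, whence $|\nabla_\Gamma v|^2 = \sum_{\alpha\beta}G^{\alpha\beta}\partial_\alpha V\partial_\beta V$ and $|\nabla_{\Gamma_h}(v\circ\bm\pi)|^2 = \sum_{\alpha\beta}G_h^{\alpha\beta}\partial_\alpha V\partial_\beta V$. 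Since $G^{\alpha\beta}$ and $G_h^{\alpha\beta}$ are uniformly positive definite, both magnitudes are pointwise comparable to $|\nabla' V|$, and \eref{eq2: equivalence of surface integrals} follows after integrating and invoking the $L^p$-equivalence on $S'$ once more.

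The higher-order estimate \eref{eq3: equivalence of surface integrals} is the main work. The strategy is to prove, separately for each chart, the equivalence of the intrinsic surface norm with the flat one,
\[
	\|v\|_{W^{m,p}(\bm\pi(S))} \simeq \|V\|_{W^{m,p}(S')} \simeq \|v\circ\bm\pi\|_{W^{m,p}(S)},
\]
and then to chain the two through the common middle term. Each equivalence is the standard comparison of a Sobolev norm on a hypersurface with the Euclidean Sobolev norm of its chart pullback: differentiating $V = v\circ\bm\Phi$ up to order $m$ expresses the flat derivatives $\partial^\gamma V$ as combinations of surface derivatives of $v$ with coefficients built from $\bm g_\alpha$, $G^{\alpha\beta}$ and their derivatives up to order $m-1$, and the transition is invertible because the metric stays uniformly elliptic. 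Here I would use that $\bm\Phi\in C^{k+1,1}$ (from the regularity of $\Omega$) and $\bm\Phi_h\in C^{k,1}$ (from \ref{H6} together with the bounds \eref{eq: global t*} on $t^*$), so that all coefficients are bounded in $W^{m-1,\infty}(S')$ for every $m \le k+1$ while $\det G$ and $\det G_h$ stay bounded away from $0$, uniformly in $h$ and $S$.

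The delicate point I expect is precisely this last step: organizing the chain- and product-rule bookkeeping so that the equivalence constants depend only on the regularity of $\Omega$, not on $h$ or on the particular $S$. The closeness estimates \eref{eq: derivative of G - Gh} are not actually needed for the equivalence itself—uniform boundedness and uniform ellipticity of the two metrics suffice—but they confirm that the constants are $h$-independent, which is what makes the bound uniform over the family $\{\mathcal S_h\}_{h\downarrow 0}$.
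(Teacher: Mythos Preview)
Your proposal is correct and follows essentially the same approach as the paper: both reduce everything to the common parameter domain $S'$ via the identity $\bm\pi\circ\bm\Phi_h=\bm\Phi$, express the surface gradients through \eref{eq: local representation of surface gradient}, and obtain the higher-order equivalence by observing that the metric tensors and their derivatives (up to order $k$) are bounded uniformly in $h$. The paper's proof is terser---it records the inversion formula $\partial_{z_\alpha}=\sum_\beta G_{\alpha\beta}(\bm g^\beta\cdot\nabla_{\bm\pi(S)})$ explicitly and cites \eref{eq: derivative of G - Gh} for the $h$-uniform bounds---but your more detailed chain-rule discussion amounts to the same argument.
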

\begin{proof}
	Estimate \eref{eq1: equivalence of surface integrals} follows from \eref{eq: local surface integral on Gamma} and \eref{eq: local surface integral on Gammah} combined with $\bm\Phi_h = \bm\pi^* \circ \bm\Phi \Longleftrightarrow \bm\pi \circ \bm\Phi_h = \bm\Phi$.
	To obtain derivative estimates \eref{eq2: equivalence of surface integrals} and \eref{eq3: equivalence of surface integrals}, it suffices to notice that we can invert \eref{eq: local representation of surface gradient} as
	\begin{equation*}
		\frac{\partial}{\partial z_\alpha} = \sum_{\beta = 1}^{d-1} G_{\alpha\beta} (\bm g^\beta \cdot \nabla_{\bm\pi(S)}), \qquad 
		\frac{\partial}{\partial z_\alpha} = \sum_{\beta = 1}^{d-1} G_{h, \alpha\beta} (\bm g_h^\beta \cdot \nabla_S),
	\end{equation*}
	and that the derivatives of $G_{h, \alpha \beta}, G_h^{\alpha \beta}, \bm g_{h, \alpha}, \bm g_h^\alpha$ up to the $k$-th order are bounded independently of $h$ in $L^\infty(S')$, due to \eref{eq: derivative of G - Gh} and $h_S \le 1$.
\end{proof}

% 2.4
\subsection{Estimates for domain perturbation errors}
We recall the following boundary-skin estimates for $S \in \mathcal S_h$, $1\le p\le \infty$, and $v \in W^{1,p}(\Omega \cup \Gamma(\delta))$ (note that $\Omega \cup \Gamma(\delta) \supset \Omega \cup \Omega_h$):
\begin{align}
	&\left| \int_{\bm\pi(S)} v\,d\gamma - \int_{S} v\circ\bm\pi\,d\gamma_h \right| \le C\delta_S \|v\|_{L^1(\bm\pi(S))}, \label{eq1: boundary-skin estimates} \\
	&\|v\|_{L^p(\bm\pi(S, \delta'))} \le C(\delta'^{1/p} \|v\|_{L^p(\bm\pi(S))} + \delta' \|\nabla v\|_{L^p(\bm\pi(S, \delta'))}) \quad (\delta' \in [\delta_S, \delta]), \label{eq2: boundary-skin estimates} \\
	&\|v - v\circ\bm\pi\|_{L^p(S)} \le C\delta_S^{1-1/p} \|\nabla v\|_{L^p(\bm\pi(S, \delta_S))}. \label{eq3: boundary-skin estimates}
\end{align}
The proofs are given in \cite[Theorems 8.1--8.3]{KOZ16} for the case $k = 1$, which can be extended to $k \ge 2$ without essential difficulty.
As a version of \eref{eq1: boundary-skin estimates}--\eref{eq3: boundary-skin estimates}, we also have
\begin{align}
	\left| \int_{\bm\pi(S)} v\circ\bm\pi^* \,d\gamma - \int_{S} v \,d\gamma_h \right| &\le C\delta_S \|v\|_{L^1(S)}, \notag \\
	\|v\|_{L^p(\bm\pi(S, \delta))} &\le C(\delta_S^{1/p} \|v\|_{L^p(S)} + \delta_S \|\nabla v\|_{L^p(\bm\pi(S, \delta))}), \label{eq2': boundary-skin estimates} \\
	\|v\circ\bm\pi^* - v\|_{L^p(\bm\pi(S))} &\le C\delta_S^{1-1/p} \|\nabla v\|_{L^p(\bm\pi(S, \delta))}. \notag
\end{align}
Adding up these for $S \in \mathcal S_h$ yields corresponding global estimates on $\Gamma$ or $\Gamma(\delta)$.
The following estimate limited to $\Omega_h \setminus \Omega$, rather than the whole boundary skin $\Gamma(\delta)$, also holds:
\begin{equation} \label{eq: RHS with Omegah minus Omega}
	\|v\|_{L^p(\Omega_h \setminus \Omega)} \le C(\delta^{1/p} \|v\|_{L^p(\Gamma_h)} + \delta \|\nabla v\|_{L^p(\Omega_h \setminus \Omega)}),
\end{equation}
which is proved in \cite[Lemma A.1]{KasKem2020a}.
Finally, denoting by $\bm n_h$ the outward unit normal to $\Gamma_h$, we notice that its error compared with $\bm n$ is estimated as (see \cite[Lemma 9.1]{KOZ16})
\begin{equation} \label{eq: n - nh}
	\|\bm n\circ\bm\pi - \bm n_h\|_{L^\infty(S)} \le Ch_S^k.
\end{equation}

We now state a version of \eref{eq3: boundary-skin estimates} which involves the surface gradient.
The proof will be given in Appendix \ref{apx: nablaGammah(v - vpi)}.
\begin{lem} \label{lem: nablaGammah(v - vpi)}
	Let $S \in \mathcal S_h$ and $v \in W^{2,p}(\Omega \cup \Gamma(\delta))$ for $1\le p\le \infty$.
	Then we have
	\begin{align}
		\|\nabla_{\Gamma_h} (v - v\circ\bm\pi) \|_{L^p(S)} &\le Ch_S^k \|\nabla v\|_{L^p(S)} + C \delta_S^{1-1/p} \|\nabla^2 v\|_{L^p(\bm\pi(S, \delta_S))}, \label{eq1: conclusion of lemma which bounds nabla(v - v circ pi)} \\
		\|\nabla_{\Gamma_h} (v - v\circ\bm\pi) \|_{L^p(S)} &\le Ch_S^k \|\nabla v\|_{L^p(\bm\pi(S))} + C \delta_S^{1-1/p} \|\nabla^2 v\|_{L^p(\bm\pi(S, \delta_S))}. \label{eq2: conclusion of lemma which bounds nabla(v - v circ pi)}
	\end{align}
\end{lem}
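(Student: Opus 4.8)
The plan is to carry out the entire computation in the local coordinate $\bm z' \in S'$ of Subsection~\ref{subsec: local coordinate} and to invoke the representation \eqref{eq: local representation of surface gradient} of $\nabla_{\Gamma_h}$. The crucial preliminary observation is that $\bm\pi \circ \bm\Phi_h = \bm\Phi$, so the pullback of $v\circ\bm\pi$ by $\bm\Phi_h$ equals $v\circ\bm\Phi$; consequently the local representative of $v - v\circ\bm\pi$ on $S$ is the map $\bm z'\mapsto v(\bm\Phi_h(\bm z')) - v(\bm\Phi(\bm z'))$. Differentiating in $z_\alpha$ and applying the chain rule gives $\partial_{z_\alpha}[v\circ\bm\Phi_h - v\circ\bm\Phi] = \nabla v(\bm\Phi_h)\cdot\bm g_{h,\alpha} - \nabla v(\bm\Phi)\cdot\bm g_\alpha$, which I would split into
\begin{equation*}
	\underbrace{[\nabla v(\bm\Phi_h) - \nabla v(\bm\Phi)]\cdot\bm g_{h,\alpha}}_{=:A_\alpha} + \underbrace{\nabla v(\bm\Phi)\cdot[\bm g_{h,\alpha} - \bm g_\alpha]}_{=:B_\alpha}.
\end{equation*}
Multiplying by $\bm g_h^\alpha$, summing over $\alpha$, and using the equivalence \eqref{eq: local surface integral on Gammah} to return from $L^p(S')$ to $L^p(S)$ reduces the lemma to estimating $\|A_\alpha\|_{L^p(S')}$ and $\|B_\alpha\|_{L^p(S')}$; here the coefficients $\bm g_h^\alpha$ and $\bm g_{h,\alpha}$ are bounded in $L^\infty(S')$ by the bounds on $\bm g_h^\alpha$, $\bm g_{h,\alpha}$ established in Subsection~\ref{subsec: local coordinate} (using $h_S \le 1$).

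For $A_\alpha$, the key point is that for $\bm x = \bm\Phi_h(\bm z')\in S$ one has $\nabla v(\bm\Phi_h(\bm z')) - \nabla v(\bm\Phi(\bm z')) = \nabla v(\bm x) - (\nabla v)(\bm\pi(\bm x))$; that is, $A_\alpha$ is (up to the bounded factor $\bm g_{h,\alpha}$) the $\bm\Phi_h$-pullback of $\nabla v - (\nabla v)\circ\bm\pi$ on $S$. Since $v\in W^{2,p}(\Omega\cup\Gamma(\delta))$, each component of $\nabla v$ lies in $W^{1,p}(\Omega\cup\Gamma(\delta))$, so the boundary-skin estimate \eqref{eq3: boundary-skin estimates} applies componentwise to $\nabla v$ and yields $\|A_\alpha\|_{L^p(S')}\le C\delta_S^{1-1/p}\|\nabla^2 v\|_{L^p(\bm\pi(S,\delta_S))}$. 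This is the second term common to both \eqref{eq1: conclusion of lemma which bounds nabla(v - v circ pi)} and \eqref{eq2: conclusion of lemma which bounds nabla(v - v circ pi)}.

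For $B_\alpha$ I would use $\|\bm g_{h,\alpha}-\bm g_\alpha\|_{L^\infty(S')}\le Ch_S^k$. To obtain \eqref{eq2: conclusion of lemma which bounds nabla(v - v circ pi)}, I note that $\nabla v(\bm\Phi) = (\nabla v)\circ\bm\Phi$ is the $\bm\Phi$-pullback of $\nabla v$ on $\bm\pi(S)$, so the equivalence \eqref{eq: local surface integral on Gamma} gives $\|\nabla v(\bm\Phi)\|_{L^p(S')}\le C\|\nabla v\|_{L^p(\bm\pi(S))}$ and hence $\|B_\alpha\|_{L^p(S')}\le Ch_S^k\|\nabla v\|_{L^p(\bm\pi(S))}$. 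To obtain instead the sharper right-hand side $\|\nabla v\|_{L^p(S)}$ of \eqref{eq1: conclusion of lemma which bounds nabla(v - v circ pi)}, I would rewrite $\nabla v(\bm\Phi) = \nabla v(\bm\Phi_h) - [\nabla v(\bm\Phi_h) - \nabla v(\bm\Phi)]$: the first piece contributes $Ch_S^k\|\nabla v\|_{L^p(S)}$ via \eqref{eq: local surface integral on Gammah}, while the second piece is again the boundary-skin difference appearing in $A_\alpha$, now multiplied by the factor $h_S^k\le 1$, hence is absorbed into the $\delta_S^{1-1/p}\|\nabla^2 v\|_{L^p(\bm\pi(S,\delta_S))}$ term.

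I do not anticipate a serious obstacle, as the argument is essentially bookkeeping in the local frame. The one point requiring care is the identification of $\nabla v(\bm\Phi_h) - \nabla v(\bm\Phi)$ with $\nabla v - (\nabla v)\circ\bm\pi$ evaluated on $S$, which is precisely what allows the reuse of \eqref{eq3: boundary-skin estimates} rather than a fresh fundamental-theorem-of-calculus estimate along the normal segments. One should also verify that all constants remain uniform as $p\to\infty$, where $\delta_S^{1-1/p}$ degenerates to $\delta_S$.
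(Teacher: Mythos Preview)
Your argument is correct and in fact slightly more economical than the paper's. The paper does not differentiate $v\circ\bm\Phi_h - v\circ\bm\Phi$ directly; instead it writes this difference as a line integral $\int_0^{t^*(\bm\Phi)} \bm n(\bm\Phi)\cdot(\nabla v)\circ\bm\Psi\,dt$ along the normal segment, then differentiates under the integral sign. The boundary term from differentiating the upper limit $t^*\circ\bm\Phi$ produces $\partial_\alpha(t^*\circ\bm\Phi)\,[\bm n\circ\bm\Phi\cdot(\nabla v)\circ\bm\Phi_h]$, whose factor $|\partial_\alpha(t^*\circ\bm\Phi)|\le Ch_S^k$ from \eqref{eq: global t*} plays exactly the role of your $\|\bm g_{h,\alpha}-\bm g_\alpha\|_{L^\infty(S')}\le Ch_S^k$. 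The remaining integrand, after H\"older in $t$, yields $\delta_S^{1-1/p}(\|\nabla v\|_{L^p(\bm\pi(S,\delta_S))}+\|\nabla^2 v\|_{L^p(\bm\pi(S,\delta_S))})$, and the paper then invokes \eqref{eq2': boundary-skin estimates} to absorb the first-order skin term---an extra step you avoid because your $A_\alpha$ is already recognized as $(\nabla v - (\nabla v)\circ\bm\pi)|_S$, to which \eqref{eq3: boundary-skin estimates} applies directly. In short, the paper rederives a version of \eqref{eq3: boundary-skin estimates} from scratch inside this proof, whereas you reuse it; both routes land on the same two terms. A minor difference in logical order: the paper proves \eqref{eq1: conclusion of lemma which bounds nabla(v - v circ pi)} first and deduces \eqref{eq2: conclusion of lemma which bounds nabla(v - v circ pi)} from it via \eqref{eq3: boundary-skin estimates} and \eqref{eq1: equivalence of surface integrals}, while you obtain \eqref{eq2: conclusion of lemma which bounds nabla(v - v circ pi)} more naturally and then pass to \eqref{eq1: conclusion of lemma which bounds nabla(v - v circ pi)} by your add--subtract trick on $\nabla v(\bm\Phi)$.
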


\begin{cor} \label{cor: u - u circ pi on Gammah}
	Let $m = 0, 1$ and assume that $v \in H^{2}(\Omega \cup \Gamma(\delta))$ if $k = 1$ and that $v \in H^{3}(\Omega \cup \Gamma(\delta))$ if $k \ge 2$.
	Then we have
	\begin{equation*}
		\| v - v\circ\bm\pi\|_{H^m(\Gamma_h)} \le Ch^{k+1-m} \|v\|_{H^{\min\{k+1, 3\}}(\Omega \cup \Gamma(\delta))}.
	\end{equation*}
\end{cor}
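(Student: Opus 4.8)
The plan is to localize to each boundary element $S \in \mathcal S_h$, derive the bound there, and then sum, exploiting that $\{\bm\pi(S)\}_{S \in \mathcal S_h}$ partitions $\Gamma$ and that the tubular neighborhoods $\{\bm\pi(S,\delta_S)\}_{S \in \mathcal S_h}$ have pairwise disjoint interiors inside $\Gamma(\delta)$ (each $\bm x \in \Gamma(\delta_0)$ has a unique foot point $\bm\pi(\bm x)$, which lies in exactly one $\bm\pi(S)$). The cases $m=0$ and $m=1$ are treated separately: the former rests on the boundary-skin estimates \eref{eq3: boundary-skin estimates} and \eref{eq2: boundary-skin estimates}, the latter on \lref{lem: nablaGammah(v - vpi)}. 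Throughout I would use $\delta_S = C h_S^{k+1}$, $h_S \le h \le 1$, and the trace inequalities $\|\nabla^j v\|_{L^2(\Gamma)} \le C\|v\|_{H^{j+1}(\Omega)}$, which require $\nabla^j v \in H^1(\Omega)$, i.e. $v \in H^{j+1}(\Omega)$.

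For $m=0$ I would start from \eref{eq3: boundary-skin estimates} with $p=2$, giving $\|v - v\circ\bm\pi\|_{L^2(S)} \le C\delta_S^{1/2}\|\nabla v\|_{L^2(\bm\pi(S,\delta_S))}$, and then feed the right-hand side into \eref{eq2: boundary-skin estimates} (again $p=2$, $\delta' = \delta_S$, applied to $\nabla v$) to trade one power of the skin width for a surface norm:
\begin{equation*}
	\|v - v\circ\bm\pi\|_{L^2(S)} \le C\delta_S \|\nabla v\|_{L^2(\bm\pi(S))} + C\delta_S^{3/2} \|\nabla^2 v\|_{L^2(\bm\pi(S,\delta_S))}.
\end{equation*}
Since $\delta_S = Ch_S^{k+1}$ and $\delta_S^{3/2} \le Ch^{k+1}$ for $h \le 1$, both terms are $O(h^{k+1})$. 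Squaring, summing over $S$, and using the disjointness together with $\|\nabla v\|_{L^2(\Gamma)} \le C\|v\|_{H^2(\Omega)}$ yields the case $m=0$. Note that $H^2$-regularity already suffices here, so the bound holds a fortiori under the $H^3$-hypothesis when $k \ge 2$.

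For $m=1$ I would invoke \eref{eq2: conclusion of lemma which bounds nabla(v - v circ pi)} of \lref{lem: nablaGammah(v - vpi)} with $p=2$:
\begin{equation*}
	\|\nabla_{\Gamma_h}(v - v\circ\bm\pi)\|_{L^2(S)} \le Ch_S^k \|\nabla v\|_{L^2(\bm\pi(S))} + C\delta_S^{1/2} \|\nabla^2 v\|_{L^2(\bm\pi(S,\delta_S))}.
\end{equation*}
When $k=1$ one has $\delta_S^{1/2} = Ch_S$, so both terms are already $O(h) = O(h^k)$, and summation with the trace inequality finishes the case using only $H^2 = H^{\min\{k+1,3\}}$. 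The genuinely delicate point is $k \ge 2$: here $\delta_S^{1/2} = Ch_S^{(k+1)/2}$ is only of order $h^{(k+1)/2}$, which is \emph{larger} than the target $h^k$, so the naive bound loses accuracy. I expect this order mismatch to be the main obstacle. I would recover the optimal rate by applying \eref{eq2: boundary-skin estimates} once more, now to $\|\nabla^2 v\|_{L^2(\bm\pi(S,\delta_S))}$, producing $C\delta_S \|\nabla^2 v\|_{L^2(\bm\pi(S))} + C\delta_S^{3/2} \|\nabla^3 v\|_{L^2(\bm\pi(S,\delta_S))}$ after multiplication by $\delta_S^{1/2}$; both are $O(h^{k+1}) = O(h^k)$. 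This bootstrap is precisely where the stronger assumption $v \in H^3$ is consumed, since the surface term $\|\nabla^2 v\|_{L^2(\bm\pi(S))}$ is meaningful and controllable by $\|v\|_{H^3(\Omega)}$ only because $\nabla^2 v \in H^1(\Omega)$.

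Summing the element-wise bounds (squaring the triangle inequality and using the disjointness of the $\bm\pi(S,\delta_S)$) and collecting the surface norms $\|\nabla v\|_{L^2(\Gamma)}$, $\|\nabla^2 v\|_{L^2(\Gamma)}$, each bounded via the trace theorem by $\|v\|_{H^2(\Omega)}$ and $\|v\|_{H^3(\Omega)}$ respectively, together with $\|\nabla^3 v\|_{L^2(\Gamma(\delta))} \le \|v\|_{H^3(\Omega \cup \Gamma(\delta))}$, gives the gradient estimate at rate $h^k$. Finally, for $m=1$ the full norm $\|v - v\circ\bm\pi\|_{H^1(\Gamma_h)}$ is the sum of its $L^2(\Gamma_h)$-part, already shown to be $O(h^{k+1})$ in the $m=0$ analysis, and its tangential-gradient part $O(h^k)$; the latter dominates and produces the asserted $h^{k}=h^{k+1-m}$ bound, completing the proof. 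Once the extra boundary-skin iteration and the $H^3$-trace are in place, everything else is routine bookkeeping.
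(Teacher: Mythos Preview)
Your proof is correct and follows essentially the same route as the paper: apply \eref{eq3: boundary-skin estimates} and then \eref{eq2: boundary-skin estimates} for $m=0$, apply \eref{eq2: conclusion of lemma which bounds nabla(v - v circ pi)} for $m=1$, and for $k\ge2$ bootstrap once more via \eref{eq2: boundary-skin estimates} applied to $\nabla^2 v$ to convert the suboptimal $\delta^{1/2}$ factor into $\delta$, which is precisely where the $H^3$-regularity is spent. The only cosmetic difference is that you work element-wise and sum (invoking disjointness of the $\bm\pi(S,\delta_S)$), whereas the paper applies the global versions of the boundary-skin estimates directly; since the global versions are obtained by exactly that summation, the two arguments are equivalent.
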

\begin{proof}
	By virtue of \eref{eq2: boundary-skin estimates} and \eref{eq3: boundary-skin estimates} (more precisely, their global versions) we have
	\begin{align*}
		\| v - v\circ\bm\pi\|_{L^2(\Gamma_h)} &\le C \delta^{1/2} \|\nabla v\|_{L^2(\Gamma(\delta))}
			\le C \delta^{1/2} (\delta^{1/2} \|\nabla v\|_{L^2(\Gamma)} + \delta \|\nabla^2  v\|_{L^2(\Gamma(\delta))})
			\le C \delta \|v\|_{H^2(\Omega \cup \Gamma(\delta))}.
	\end{align*}
	Similarly, we see from \eref{eq2: conclusion of lemma which bounds nabla(v - v circ pi)} that
	\begin{align*}
		\|\nabla_{\Gamma_h} ( v - v\circ\bm\pi)\|_{L^2(\Gamma_h)} &\le C h^k (\|\nabla v\|_{L^2(\Gamma)} + \delta^{1/2} \|\nabla^2 v\|_{L^2(\Gamma(\delta))} ) \\
			&\le \begin{cases}
				Ch \|v\|_{H^2(\Omega)} + Ch \|\nabla^2  v\|_{L^2(\Omega \cup \Gamma(\delta))} & (k = 1) \\
				Ch^k \|v\|_{H^2(\Omega)} + C\delta^{1/2} (\delta^{1/2}\|\nabla^2 v\|_{L^2(\Gamma)} + \delta \|\nabla^3 v\|_{L^2(\Gamma(\delta))}) & (k \ge 2)
			\end{cases} \\
			&\le \begin{cases}
				Ch \|v\|_{H^2(\Omega \cup \Gamma(\delta))} & (k = 1) \\
				Ch^k \|v\|_{H^3(\Omega \cup \Gamma(\delta))} & (k \ge 2),
			\end{cases}
	\end{align*}
	where we have used $\delta = Ch^{k+1}$ and $h \le 1$.
\end{proof}
Below several lemmas are introduced to address errors related with the $L^2$-inner product on surfaces.
\begin{lem}
	For $u, v \in H^2(\Omega \cup \Gamma(\delta))$ we have
	\begin{equation*}
		|(u, v)_{\Gamma_h} - (u, v)_\Gamma| \le C \delta \|u\|_{H^2(\Omega \cup \Gamma(\delta))} \|v\|_{H^2(\Omega \cup \Gamma(\delta))}.
	\end{equation*}
\end{lem}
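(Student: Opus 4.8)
The plan is to reduce the global difference $(u,v)_{\Gamma_h}-(u,v)_\Gamma$ to an element-wise comparison and then invoke the boundary-skin estimates \eref{eq1: boundary-skin estimates}--\eref{eq3: boundary-skin estimates}. Since $\Gamma_h=\bigcup_{S\in\mathcal S_h}S$ and $\Gamma=\bigcup_{S\in\mathcal S_h}\bm\pi(S)$ are disjoint unions, I would write
\begin{equation*}
(u,v)_{\Gamma_h}-(u,v)_\Gamma=\sum_{S\in\mathcal S_h}\left(\int_S uv\,d\gamma_h-\int_{\bm\pi(S)}uv\,d\gamma\right),
\end{equation*}
and for each $S$ insert the pullback $(uv)\circ\bm\pi$ on $\Gamma_h$ to split the summand into a \emph{Jacobian-mismatch} part $\int_S (uv)\circ\bm\pi\,d\gamma_h-\int_{\bm\pi(S)}uv\,d\gamma$ and a \emph{trace-mismatch} part $\int_S\left(uv-(uv)\circ\bm\pi\right)d\gamma_h$.

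For the Jacobian-mismatch part I would apply \eref{eq1: boundary-skin estimates} with the integrand $uv$, giving the bound $C\delta_S\|uv\|_{L^1(\bm\pi(S))}\le C\delta_S\|u\|_{L^2(\bm\pi(S))}\|v\|_{L^2(\bm\pi(S))}$ by Cauchy--Schwarz. Summing over $S$, using $\delta_S\le\delta$ and a discrete Cauchy--Schwarz, this contributes at most $C\delta\|u\|_{L^2(\Gamma)}\|v\|_{L^2(\Gamma)}$, which is controlled by $C\delta\|u\|_{H^2(\Omega\cup\Gamma(\delta))}\|v\|_{H^2(\Omega\cup\Gamma(\delta))}$ through the trace inequality on the interior hypersurface $\Gamma\subset\Gamma(\delta)$.

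For the trace-mismatch part I would use the product identity $uv-(u\circ\bm\pi)(v\circ\bm\pi)=u\,(v-v\circ\bm\pi)+(u-u\circ\bm\pi)(v\circ\bm\pi)$, Cauchy--Schwarz on $S$, and then \eref{eq3: boundary-skin estimates} with $p=2$. A direct application only yields a factor $\delta_S^{1/2}$, which is insufficient; the key point---exactly as in the proof of \cref{cor: u - u circ pi on Gammah}---is that the resulting gradient appears as a \emph{volume} norm $\|\nabla v\|_{L^2(\bm\pi(S,\delta_S))}$ over a slab of width $\delta_S$, so a second application of \eref{eq2: boundary-skin estimates} trades that slab norm for $\delta_S^{1/2}\|\nabla v\|_{L^2(\bm\pi(S))}+\delta_S\|\nabla^2 v\|_{L^2(\bm\pi(S,\delta_S))}$, upgrading $\delta_S^{1/2}$ to the full $\delta_S$. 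Combined with $\|v\circ\bm\pi\|_{L^2(S)}\le C\|v\|_{L^2(\bm\pi(S))}$ from \eref{eq1: equivalence of surface integrals} and the pointwise trace bound $\|u\|_{L^2(S)}\le C(\|u\|_{L^2(\bm\pi(S))}+\|\nabla u\|_{L^2(\bm\pi(S,\delta_S))})$, summation over $S$ (with Cauchy--Schwarz and the fact that the slabs $\bm\pi(S,\delta)$ tile $\Gamma(\delta)$) bounds this part by $C\delta\,\|u\|_{H^1(\Omega\cup\Gamma(\delta))}\|v\|_{H^2(\Omega\cup\Gamma(\delta))}$ plus a harmless higher-order $O(\delta^{3/2})$ remainder.

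The main obstacle is precisely this loss of half a power of $\delta$ in the trace-mismatch term: naively one only sees $\delta^{1/2}=O(h^{(k+1)/2})$, which would destroy the claimed estimate. Recovering the full $\delta$ requires recognizing the slab structure of the error and applying the boundary-skin estimate twice, which forces the use of the full $H^2$-regularity of both factors (so that $\nabla v$ admits a trace and $\nabla^2 v$ is square-integrable on the slab). Everything else is routine: trace inequalities on the interior surfaces $\Gamma$ and $\Gamma_h$ together with the disjointness of the tubular slabs provide the uniform constants.
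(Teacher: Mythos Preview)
Your proposal is correct and follows essentially the same route as the paper: a Jacobian-mismatch term handled by \eref{eq1: boundary-skin estimates} and a trace-mismatch term handled by the two-step boundary-skin argument \eref{eq3: boundary-skin estimates}+\eref{eq2: boundary-skin estimates} (which the paper packages as a citation of \cref{cor: u - u circ pi on Gammah}). The only cosmetic difference is that the paper splits into three terms $(u-u\circ\bm\pi,v)_{\Gamma_h}+[(u\circ\bm\pi,v)_{\Gamma_h}-(u,v\circ\bm\pi^*)_\Gamma]+(u,v\circ\bm\pi^*-v)_\Gamma$, separating the two factors individually rather than treating the product $uv$ first, but the ingredients are identical.
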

\begin{proof}
	Observe that
	\begin{equation*}
		(u, v)_{\Gamma_h} - (u, v)_\Gamma = (u - u\circ\bm\pi, v)_{\Gamma_h} + \big[ (u\circ\bm\pi, v)_{\Gamma_h} - (u, v\circ\bm\pi^*)_\Gamma \big] + (u, v\circ\bm\pi^* - v)_\Gamma.
	\end{equation*}
	The first term in the right-hand side is bounded by $C  \delta \|\tilde u\|_{H^2(\Omega \cup \Gamma(\delta))} \|v\|_{L^2(\Gamma_h)}$ due to \cref{cor: u - u circ pi on Gammah}.
	The third term can be treated similarly.
	From \eref{eq1: boundary-skin estimates} and \eref{eq1: equivalence of surface integrals} the second term is bounded by
	\begin{equation*}
		C\delta \|u (v\circ\bm\pi^*)\|_{L^1(\Gamma)} \le C \delta \|u\|_{L^2(\Gamma)} \|v\circ\bm\pi^*\|_{L^2(\Gamma)} \le C \delta \|u\|_{L^2(\Gamma)} \|v\|_{L^2(\Gamma_h)}. 
	\end{equation*}
	Using trace inequalities on $\Gamma$ and $\Gamma_h$, we arrive at the desired estimate.
\end{proof}
\begin{lem} \label{lem: error from integration by parts on Gammah}
	For $u \in H^2(\Gamma)$ and $v \in H^1(\Gamma_h)$ we have
	\begin{equation*}
		\big| ( (\Delta_\Gamma u)\circ\bm\pi, v )_{\Gamma_h} + (\nabla_{\Gamma_h}(u\circ\bm\pi), \nabla_{\Gamma_h}v)_{\Gamma_h} \big|
			\le C\delta (\|u\|_{H^2(\Gamma)} \|v\|_{L^2(\Gamma_h)} + \|\nabla_\Gamma u\|_{L^2(\Gamma)} \|\nabla_{\Gamma_h} v\|_{L^2(\Gamma_h)}).
	\end{equation*}
\end{lem}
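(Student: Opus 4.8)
The plan is to pull the pairing back to the smooth, \emph{closed} boundary $\Gamma$, on which the Laplace--Beltrami operator obeys Green's identity with no boundary term, and then to control the perturbation incurred by passing from $(\Gamma, \nabla_\Gamma)$ to $(\Gamma_h, \nabla_{\Gamma_h})$. Concretely, set $w := v\circ\bm\pi^* \in H^1(\Gamma)$, so that $w\circ\bm\pi = v$ on $\Gamma_h$ and $\|w\|_{H^1(\Gamma)} \le C\|v\|_{H^1(\Gamma_h)}$ by \lref{lem: equivalence of Sobolev spaces on Gamma and Gammah}. Since $\Gamma$ is a closed manifold, $(\Delta_\Gamma u, w)_\Gamma + (\nabla_\Gamma u, \nabla_\Gamma w)_\Gamma = 0$, and I would subtract this vanishing quantity to write
\[
	((\Delta_\Gamma u)\circ\bm\pi, v)_{\Gamma_h} + (\nabla_{\Gamma_h}(u\circ\bm\pi), \nabla_{\Gamma_h}v)_{\Gamma_h} = \mathrm{I} + \mathrm{II},
\]
with $\mathrm{I} := ((\Delta_\Gamma u)\circ\bm\pi, v)_{\Gamma_h} - (\Delta_\Gamma u, w)_\Gamma$ and $\mathrm{II} := (\nabla_{\Gamma_h}(u\circ\bm\pi), \nabla_{\Gamma_h}v)_{\Gamma_h} - (\nabla_\Gamma u, \nabla_\Gamma w)_\Gamma$.

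For the zeroth-order term $\mathrm{I}$, I would apply the global version of the boundary-skin estimate \eref{eq1: boundary-skin estimates} to the scalar integrand $F := (\Delta_\Gamma u)\,w$ on $\Gamma$, observing that $F\circ\bm\pi = ((\Delta_\Gamma u)\circ\bm\pi)\,v$ on $\Gamma_h$. This gives $|\mathrm{I}| \le C\delta\|F\|_{L^1(\Gamma)} \le C\delta\|\Delta_\Gamma u\|_{L^2(\Gamma)}\|w\|_{L^2(\Gamma)} \le C\delta\|u\|_{H^2(\Gamma)}\|v\|_{L^2(\Gamma_h)}$, the last inequality using $\|\Delta_\Gamma u\|_{L^2(\Gamma)} \le C\|u\|_{H^2(\Gamma)}$ together with $\|w\|_{L^2(\Gamma)} \le C\|v\|_{L^2(\Gamma_h)}$ from \eref{eq1: equivalence of surface integrals}.

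The substance is in $\mathrm{II}$. The decisive observation is that $u$ and $u\circ\bm\pi$ possess a single local representation $U := u\circ\bm\Phi = (u\circ\bm\pi)\circ\bm\Phi_h$, by virtue of $\bm\pi\circ\bm\Phi_h = \bm\Phi$; likewise $w$ and $v$ share $W := w\circ\bm\Phi = v\circ\bm\Phi_h$. Inserting \eref{eq: local representation of surface gradient} and using $\bm g^\alpha\cdot\bm g^\beta = G^{\alpha\beta}$, each pairing localizes element by element to
\[
	\int_{S'}\sum_{\alpha,\beta}G^{\alpha\beta}\,\frac{\partial U}{\partial z_\alpha}\frac{\partial W}{\partial z_\beta}\sqrt{\det G}\,d\bm z' \quad\text{and}\quad \int_{S'}\sum_{\alpha,\beta}G_h^{\alpha\beta}\,\frac{\partial U}{\partial z_\alpha}\frac{\partial W}{\partial z_\beta}\sqrt{\det G_h}\,d\bm z',
\]
so their difference is driven by $|G_h^{\alpha\beta}\sqrt{\det G_h} - G^{\alpha\beta}\sqrt{\det G}| \le C\delta_S$. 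This bound follows from $\|G^{\alpha\beta} - G_h^{\alpha\beta}\|_{L^\infty(S')} \le C\delta_S$ and $\|\sqrt{\det G} - \sqrt{\det G_h}\|_{L^\infty(S')} \le C\delta_S$ together with the uniform two-sided bounds on $\det G, \det G_h$ recorded in \sref{subsec: local coordinate}. Finally, the inversion formula from the proof of \lref{lem: equivalence of Sobolev spaces on Gamma and Gammah} yields $|\partial U/\partial z_\alpha| \le C|(\nabla_\Gamma u)\circ\bm\Phi|$ and $|\partial W/\partial z_\beta| \le C|(\nabla_{\Gamma_h}v)\circ\bm\Phi_h|$; applying Cauchy--Schwarz on $S'$ with \eref{eq: local surface integral on Gamma} and \eref{eq: local surface integral on Gammah}, then summing over $S\in\mathcal S_h$ with Cauchy--Schwarz, gives $|\mathrm{II}| \le C\delta\|\nabla_\Gamma u\|_{L^2(\Gamma)}\|\nabla_{\Gamma_h}v\|_{L^2(\Gamma_h)}$. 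Adding the two bounds produces the claim.

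The main obstacle is the treatment of $\mathrm{II}$: one must recognize that, after exploiting the common coordinate representations $U$ and $W$, the two gradient pairings differ only through the metric weights $G^{\alpha\beta}\sqrt{\det G}$ versus $G_h^{\alpha\beta}\sqrt{\det G_h}$, and that the function-dependent factors collapse cleanly onto $\nabla_\Gamma u$ and $\nabla_{\Gamma_h}v$ with no loss of order. By comparison, $\mathrm{I}$ and the underlying integration-by-parts identity are routine.
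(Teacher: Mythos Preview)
Your proof is correct and follows essentially the same approach as the paper: you introduce $w = v\circ\bm\pi^*$, subtract the vanishing Green's identity on $\Gamma$, and split into the same two pieces $\mathrm{I}$ and $\mathrm{II}$, bounding $\mathrm{I}$ via \eref{eq1: boundary-skin estimates} and $\mathrm{II}$ via the local-coordinate comparison of the metric weights $G^{\alpha\beta}\sqrt{\det G}$ versus $G_h^{\alpha\beta}\sqrt{\det G_h}$. The paper's argument is identical in structure, differing only in that it leaves the passage from $\|\nabla_{\bm z'}(u\circ\bm\Phi)\|_{L^2(S')}$ back to $\|\nabla_\Gamma u\|_{L^2(\bm\pi(S))}$ implicit rather than invoking the inversion formula explicitly.
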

\begin{proof}
	Using an integration-by-parts formula on $\Gamma$, we decompose the left-hand side as
	\begin{align*}
		&( (\Delta_\Gamma u)\circ\bm\pi, v )_{\Gamma_h} + (\nabla_{\Gamma_h}(u\circ\bm\pi), \nabla_{\Gamma_h}v)_{\Gamma_h} \\
		= \; &\big[ ( (\Delta_\Gamma u)\circ\bm\pi, v )_{\Gamma_h} - (\Delta_\Gamma u, v\circ\bm\pi^*)_\Gamma \big]
			+ \big[ -(\nabla_\Gamma u, \nabla_\Gamma(v\circ\bm\pi^*))_\Gamma + (\nabla_{\Gamma_h}(u\circ\bm\pi), \nabla_{\Gamma_h}v)_{\Gamma_h} \big] \\
		=: \; &I_1 + I_2.
	\end{align*}
	By \eref{eq1: boundary-skin estimates} and \eref{eq1: equivalence of surface integrals}, $|I_1| \le C\delta \|(\Delta_\Gamma u)\circ\bm\pi\|_{L^2(\Gamma_h)} \|v\|_{L^2(\Gamma_h)} \le Ch^2 \|u\|_{H^2(\Gamma)} \|v\|_{L^2(\Gamma_h)}$.
	For $I_2$, we represent the surface integrals on $S$ and $\bm\pi(S)$ based on the local coordinate as follows:
	\begin{align*}
		\int_S \nabla_{\Gamma_h}(u\circ\bm\pi) \cdot \nabla_{\Gamma_h} v \, d\gamma_h &= \int_{S'} \sum_{\alpha,\beta} \partial_\alpha (u \circ \bm\Phi) \partial_\beta (v \circ \bm\Phi_h) \, G_h^{\alpha\beta} \sqrt{\operatorname{det}G_h} \, dz', \\
		\int_{\bm\pi(S)} \nabla_{\Gamma}u \cdot \nabla_{\Gamma} (v\circ\bm\pi^*) \, d\gamma &= \int_{S'} \sum_{\alpha,\beta} \partial_\alpha (u \circ \bm\Phi) \partial_\beta (v \circ \bm\Phi_h) \, G^{\alpha\beta} \sqrt{\operatorname{det}G} \, dz'.
	\end{align*}
	Since $\|G - G_h\|_{L^\infty(S')} \le C\delta_S$, their difference is estimated by
	\begin{equation*}
		C\delta_S \|\nabla_{\bm z'}(u\circ\bm\Phi)\|_{L^2(S')} \|\nabla_{\bm z'}(v\circ\bm\Phi_h)\|_{L^2(S')} \le C\delta_S \|\nabla_\Gamma u\|_{L^2(\bm\pi(S))} \|\nabla_{\Gamma_h} v\|_{L^2(S)}.
	\end{equation*}
	Adding this up for $S \in \mathcal S_h$ gives $|I_2| \le C\delta \|\nabla_\Gamma u\|_{L^2(\Gamma)} \|\nabla_{\Gamma_h} v\|_{L^2(\Gamma_h)}$, and this completes the proof.
\end{proof}

\begin{rem}
	(i) Since $\Gamma_h$ itself is not $C^{1,1}$-smooth globally, $(-\Delta_{\Gamma_h} u, v) = (\nabla_{\Gamma_h} u, \nabla_{\Gamma_h} v)$ does not hold in general (see \cite[Lemma 3.1]{KCDQ2015}).
	
	(ii) An argument similar to the proof above shows, for $u, v \in H^1(\Gamma)$,
	\begin{equation}
		\big| ( \nabla_{\Gamma_h}(u\circ\bm\pi), \nabla_{\Gamma_h}(v\circ\bm\pi) )_{\Gamma_h}  - (\nabla_\Gamma u, \nabla_\Gamma v)_\Gamma \big|
			\le C \delta \|\nabla_\Gamma u\|_{L^2(\Gamma)} \|\nabla_\Gamma v\|_{L^2(\Gamma)}. \label{eq: error between nabla Gammah and nabla Gamma}
	\end{equation}
\end{rem}

\begin{lem} \label{lem: inner product between nabla(u - u circ pi) and nabla v circ pi}
	Let $u \in H^2(\Omega \cup \Gamma(\delta))$ and $v \in H^2(\Gamma)$.  Then we have
	\begin{equation*}
		\big| (\nabla_{\Gamma_h} (u - u\circ\bm\pi), \nabla_{\Gamma_h} (v\circ\bm\pi))_{\Gamma_h} \big| \le C\delta \|u\|_{H^2(\Omega \cup \Gamma(\delta))}\|v\|_{H^2(\Gamma)}.
	\end{equation*}
\end{lem}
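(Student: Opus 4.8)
The plan is to avoid estimating the two factors by a direct Cauchy--Schwarz inequality, which is too lossy here. Indeed, the global version of \lref{lem: nablaGammah(v - vpi)} only yields
$\|\nabla_{\Gamma_h}(u-u\circ\bm\pi)\|_{L^2(\Gamma_h)} \le C(h^k\|\nabla u\|_{L^2(\Gamma)} + \delta^{1/2}\|\nabla^2 u\|_{L^2(\Gamma(\delta))})$,
whose leading term is of order $h^k$; since $\delta = Ch^{k+1}$ and $h\le 1$, this is strictly larger than the target order $\delta$. Pairing it with $\|\nabla_{\Gamma_h}(v\circ\bm\pi)\|_{L^2(\Gamma_h)}$ would therefore give only $O(h^k)$, one power short of the claim. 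The missing power must be recovered through cancellation, which I obtain by transplanting the whole expression onto the \emph{closed} surface $\Gamma$ and integrating by parts there.

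First I would introduce $f := u\circ\bm\pi^* - u$, viewed as a function on $\Gamma$, and record the identity $f\circ\bm\pi = u - u\circ\bm\pi$ on $\Gamma_h$ (using $\bm\pi^*\circ\bm\pi = \mathrm{id}_{\Gamma_h}$). Consequently the left-hand side equals $(\nabla_{\Gamma_h}(f\circ\bm\pi), \nabla_{\Gamma_h}(v\circ\bm\pi))_{\Gamma_h}$, which is exactly the form to which the metric-comparison estimate \eref{eq: error between nabla Gammah and nabla Gamma} applies (with $u$ there replaced by $f$). This splits the target into the principal term $(\nabla_\Gamma f, \nabla_\Gamma v)_\Gamma$ plus a remainder bounded by $C\delta\|\nabla_\Gamma f\|_{L^2(\Gamma)}\|\nabla_\Gamma v\|_{L^2(\Gamma)}$. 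For the remainder I only need the crude bound $\|\nabla_\Gamma f\|_{L^2(\Gamma)} \le C\|u\|_{H^2(\Omega\cup\Gamma(\delta))}$, which follows from the equivalence \eref{eq2: equivalence of surface integrals} (bounding $\|\nabla_\Gamma(u\circ\bm\pi^*)\|_{L^2(\Gamma)}$ by $\|\nabla_{\Gamma_h}u\|_{L^2(\Gamma_h)}$) together with the trace inequality; this contribution is already $O(\delta)$.

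The decisive term is $(\nabla_\Gamma f, \nabla_\Gamma v)_\Gamma$. Since $\Gamma$ is a closed smooth surface without boundary and $v\in H^2(\Gamma)$, integration by parts on $\Gamma$ produces no edge terms at all: $(\nabla_\Gamma f, \nabla_\Gamma v)_\Gamma = -(f, \Delta_\Gamma v)_\Gamma$. I would then estimate this by $\|f\|_{L^2(\Gamma)}\|\Delta_\Gamma v\|_{L^2(\Gamma)}$ and invoke the $\bm\pi^*$-analogue of \cref{cor: u - u circ pi on Gammah}, namely $\|u\circ\bm\pi^* - u\|_{L^2(\Gamma)} \le C\delta\|u\|_{H^2(\Omega\cup\Gamma(\delta))}$, which is proved exactly as the corollary but using the $\bm\pi^*$-versions \eref{eq2': boundary-skin estimates} of the boundary-skin estimates. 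Combined with $\|\Delta_\Gamma v\|_{L^2(\Gamma)} \le \|v\|_{H^2(\Gamma)}$, this gives the remaining $O(\delta)$ contribution, and adding the two parts finishes the proof.

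The main obstacle is conceptual rather than computational: one must route the argument through $\Gamma$ and not $\Gamma_h$. Integrating by parts directly on $\Gamma_h$ would generate inter-element conormal-jump terms, since the facets of $\Gamma_h$ meet at $O(h^k)$ kinks and neighbouring elements may sit in different charts $U_r$; these jumps are delicate to bound. On the smooth closed $\Gamma$ they vanish identically, and the required extra factor $\delta^{1/2}$ is instead produced by two independent mechanisms—the metric distortion $\|G-G_h\|_{L^\infty}\le C\delta_S$ inside \eref{eq: error between nabla Gammah and nabla Gamma}, and the $L^2$-smallness of $f$. The only points needing care are verifying $f\in H^1(\Gamma)$ so that both \eref{eq: error between nabla Gammah and nabla Gamma} and the integration by parts are legitimate (here $u|_{\Gamma_h}\in H^{3/2}\subset H^1$ suffices), and checking that the $\bm\pi^*$-version of the $L^2$ boundary-skin bound really delivers order $\delta$ and not merely $\delta^{1/2}$—which it does, precisely because $\nabla u$ is the trace of an $H^1$ function and \eref{eq2: boundary-skin estimates} contributes an additional $\delta^{1/2}$.
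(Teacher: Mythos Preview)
Your proposal is correct and is essentially the paper's own argument: both apply \eref{eq: error between nabla Gammah and nabla Gamma} with $f = u\circ\bm\pi^* - u$ to transplant the inner product to $\Gamma$, integrate by parts there against $\Delta_\Gamma v$, and then use the $O(\delta)$ bound on $\|u\circ\bm\pi^* - u\|_{L^2(\Gamma)}$ (the paper routes this last step through $\|u - u\circ\bm\pi\|_{L^2(\Gamma_h)}$ via \eref{eq1: equivalence of surface integrals}, but this is the same estimate).
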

\begin{proof}
	By \eref{eq: error between nabla Gammah and nabla Gamma},
	\begin{equation*}
		\big| (\nabla_{\Gamma_h} (u - u\circ\bm\pi), \nabla_{\Gamma_h} (v\circ\bm\pi))_{\Gamma_h} - (\nabla_{\Gamma} (u\circ\bm\pi^* - u), \nabla_{\Gamma} v)_{\Gamma} \big| \le C\delta (\|u\|_{H^1(\Gamma_h)} + \|u\|_{H^1(\Gamma)}) \|v\|_{H^1(\Gamma)}.
	\end{equation*}
	Next we observe that
	\begin{align*}
		|(\nabla_{\Gamma} (u\circ\bm\pi^* - u), \nabla_{\Gamma} v)_{\Gamma}| &= |(u\circ\bm\pi^* - u, \Delta_\Gamma v)_{\Gamma}| \le \|u\circ\bm\pi^* - u\|_{L^2(\Gamma)} \|v\|_{H^2(\Gamma)} \\
			&\le C \|u - u\circ\bm\pi\|_{L^2(\Gamma_h)} \|v\|_{H^2(\Gamma)}.
	\end{align*}
	This combined with the boundary-skin estimate
	\begin{equation*}
		\|u - u\circ\bm\pi\|_{L^2(\Gamma_h)} \le C\delta^{1/2} \|\nabla u\|_{L^2(\Gamma(\delta))} \le C\delta^{1/2} (\delta^{1/2} \|\nabla u\|_{H^1(\Gamma)} + \delta \|\nabla^2 u\|_{L^2(\Gamma(\delta))}),
	\end{equation*}
	with the trace theorem in $\Omega$, and with $\delta \le 1$, yields the desired estimate.
\end{proof}

% 3
\section{Finite element approximation} \label{sec: FEM}
% 3.1
\subsection{Finite element spaces}
We introduce the global nodes of $\mathcal T_h$ by
\begin{equation*}
	\mathcal N_h = \{ \bm F_T(\hat{\bm a}_i) \in \overline\Omega_h \mid T \in \mathcal T_h, \; i = 1, \dots, N_k \}.
\end{equation*}
The interior and boundary nodes are denoted by $\mathring{\mathcal N}_h = \mathcal N_h \cap \operatorname{int}\Omega_h$ and $\mathcal N_h^\partial = \mathcal N_h \cap \Gamma_h$, respectively.
We next define the global nodal basis functions $\phi_{\bm p} \, (\bm p \in \mathcal N_h)$ by
\begin{equation*}
	\phi_{\bm p}|_T = \begin{cases}
		0 & \text{ if } \bm p \notin T, \\
		\hat\phi_i \circ \bm F_T^{-1} & \text{ if $\bm p \in T$ and  $\bm p = \bm F_T(\hat{\bm a_i})$ with $\hat{\bm a_i} \in \Sigma_k$},
	\end{cases}
	\quad (\forall T \in \mathcal T_h)
\end{equation*}
which becomes continuous in $\overline\Omega_h$ thanks to the assumption on $\hat\Sigma_k$.
Then $\phi_{\bm p}(\bm q) = 1$ if $\bm p = \bm q$ and $\phi_{\bm p}(\bm q) = 0$ otherwise, for $\bm p, \bm q \in \mathcal N_h$.
We now set the $\mathbb P_k$-isoparametric finite element spaces by
\begin{equation*}
	V_h = \operatorname{span}\{\phi_{\bm p}\}_{\bm p \in \mathcal N_h} = \{ v_h \in C(\overline\Omega_h) \mid v_h\circ \bm F_T \in \mathbb P_k(\hat T) \; (\forall T \in \mathcal T_h) \}.
\end{equation*}
We see that $V_h \subset H^1(\Omega_h; \Gamma_h)$.
In particular, the restriction of $v_h \in V_h$ to $\Gamma_h$ is represented by $\mathbb P_k$-isoparametric finite element bases defined on $\Gamma_h$, that is,
\begin{equation*}
	(v_h \circ \bm F_{T_S})|_{\hat S} \in \mathbb P_k(\hat S) \quad (\forall S \in \mathcal S_h),
\end{equation*}
where $\hat S := \bm F_{T_S}^{-1}(S)$ denotes the pullback of the face $S$ in the reference coordinate (recall that $T_S$ is the element in $\mathcal T_h$ that contains $S$).

Noticing the chain rules $\nabla_{\bm x} = (\nabla_{\bm x}\bm F_T^{-1}) \nabla_{\hat{\bm x}}$, $\nabla_{\hat{\bm x}} = (\nabla_{\bm x}\bm F_T) \nabla_{\bm x}$ and the estimates given in \rref{rem: after regularity of order k}(v), we obtain the following estimates concerning the transformation between $\hat T$ and $T$:
\begin{prop}
	For $T \in \mathcal T_h$ and $v \in H^m(T)$ we have
	\begin{align*}
		\|\nabla_{\bm x}^m v\|_{L^2(T)} \le C h_T^{-m + d/2} \|\hat v\|_{H^m(\hat T)}, \qquad
		\|\nabla_{\hat{\bm x}}^m \hat v\|_{L^2(\hat T)} \le C h_T^{m - d/2} \|v\|_{H^m(T)},
	\end{align*}
	where $\hat v := v \circ \bm F_T \in H^m(\hat T)$.
\end{prop}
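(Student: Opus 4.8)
The plan is to establish both bounds by a routine scaling argument, the only non-mechanical ingredient being the higher-order chain rule (the Fa\`a di Bruno formula) applied to the composition of $\hat v$ with $\bm F_T$ or $\bm F_T^{-1}$. I would begin with the first inequality, writing $v = \hat v \circ \bm F_T^{-1}$ and differentiating $m$ times in $\bm x$. Iterating $\nabla_{\bm x} = (\nabla_{\bm x}\bm F_T^{-1})\nabla_{\hat{\bm x}}$ and applying the Leibniz rule produces, after collecting terms, a finite sum of contributions of the form $\bigl((\nabla_{\hat{\bm x}}^j \hat v)\circ\bm F_T^{-1}\bigr)\,\prod_{i=1}^j \nabla_{\bm x}^{l_i}\bm F_T^{-1}$, where $1 \le j \le m$ and the differentiation orders satisfy $l_1 + \cdots + l_j = m$ with each $l_i \ge 1$.

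The key observation, on which the whole estimate hinges, is that whatever the partition $(l_1, \dots, l_j)$ may be, the orders of the map-derivative factors always sum to exactly $m$. Hence, using $\|\nabla_{\bm x}^{l} \bm F_T^{-1}\|_{L^\infty(T)} \le C h_T^{-l}$ from \rref{rem: after regularity of order k} (valid for $l = 1, \dots, k+1$, which forces the mild restriction $m \le k+1$), each such product is bounded in $L^\infty(T)$ by $C h_T^{-(l_1 + \cdots + l_j)} = C h_T^{-m}$, uniformly in $j$. This yields the pointwise bound $|\nabla_{\bm x}^m v| \le C h_T^{-m} \sum_{j=1}^m |(\nabla_{\hat{\bm x}}^j \hat v)\circ\bm F_T^{-1}|$. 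Squaring, integrating over $T$, and changing variables via $\bm x = \bm F_T(\hat{\bm x})$ with $|\operatorname{det}(\nabla_{\hat{\bm x}}\bm F_T)| \le C_2 h_T^d$ then gives $\|\nabla_{\bm x}^m v\|_{L^2(T)}^2 \le C h_T^{-2m+d}\|\hat v\|_{H^m(\hat T)}^2$, which is the first claim after taking square roots.

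The second inequality follows in an entirely symmetric fashion, now starting from $\hat v = v \circ \bm F_T$ and using $\nabla_{\hat{\bm x}} = (\nabla_{\hat{\bm x}}\bm F_T)\nabla_{\bm x}$; here the factors are derivatives of $\bm F_T$, bounded by $\|\nabla_{\hat{\bm x}}^{l}\bm F_T\|_{L^\infty(\hat T)} \le C h_T^{l}$ (from \ref{H6} for $l \ge 2$ and from \rref{rem: after regularity of order k} for $l = 1$, these vanishing for $l > k$ since $\bm F_T$ is a polynomial of degree $k$). The same accounting gives $h_T^{l_1 + \cdots + l_j} = h_T^m$, hence $|\nabla_{\hat{\bm x}}^m \hat v| \le C h_T^m \sum_{j=1}^m |(\nabla_{\bm x}^j v)\circ\bm F_T|$. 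Changing variables back, with $|\operatorname{det}(\nabla_{\bm x}\bm F_T^{-1})| = |\operatorname{det}(\nabla_{\hat{\bm x}}\bm F_T)|^{-1} \le C h_T^{-d}$, produces $\|\nabla_{\hat{\bm x}}^m \hat v\|_{L^2(\hat T)} \le C h_T^{m-d/2}\|v\|_{H^m(T)}$.

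I expect the main obstacle to be purely bookkeeping: organizing the terms produced by the iterated chain rule and verifying that the powers of $h_T$ always combine to $\mp m$ regardless of how the $m$ derivatives are distributed among the map factors. Once this combinatorial fact is secured, together with the two-sided Jacobian bound $C_1 h_T^d \le |\operatorname{det}(\nabla_{\hat{\bm x}}\bm F_T)| \le C_2 h_T^d$ from \rref{rem: after regularity of order k}, both estimates reduce to a single change of variables.
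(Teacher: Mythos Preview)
Your proposal is correct and is precisely the argument the paper has in mind: the paper does not give a separate proof but simply introduces the proposition with the sentence ``Noticing the chain rules $\nabla_{\bm x} = (\nabla_{\bm x}\bm F_T^{-1}) \nabla_{\hat{\bm x}}$, $\nabla_{\hat{\bm x}} = (\nabla_{\hat{\bm x}}\bm F_T) \nabla_{\bm x}$ and the estimates given in \rref{rem: after regularity of order k}\ldots'', which is exactly the Fa\`a di Bruno expansion plus the bounds on $\nabla^l\bm F_T$, $\nabla^l\bm F_T^{-1}$, and the Jacobian that you spell out. Your observation that the argument implicitly requires $m \le k+1$ is also on point.
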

In particular, if $T \in \mathcal T_h$, $\bm p \in \mathcal N_h \cap T$, and $\bm p = \bm F_T(\hat{\bm a}_i)$, then
\begin{equation*}
	\|\nabla_{\bm x}^m \phi_{\bm p}\|_{L^2(T)} \le Ch_T^{-m + d/2} \Big( \sum_{l = 0}^m \|\nabla_{\hat{\bm x}}^l \hat\phi_{\bm p}\|_{L^2(\hat T)}^2 \Big)^{1/2}
		\le Ch_T^{-m + d/2},
\end{equation*}
where the quantities depending only on the reference element $\hat T$ have been combined into the generic constant.

To get an analogous estimate on the boundary $\Gamma_h$, we let $S$ be a curved $(d-1)$-face of $T \in \mathcal T_h$, i.e., $S = \bm F_T(\hat S)$ where $\hat S$ is a $(d-1)$-face of $\hat T$.
Then $\hat S$ is contained in some hyperplane $\hat x_d = \hat{\bm a}_{\hat S}' \cdot \hat{\bm x}' + \hat{\bm b}_{\hat S}$, and we get the following parametrization of $S$:
\begin{equation*}
	\bm F_S: \hat S' \to S; \quad \hat{\bm x}' \mapsto \bm F_T(\hat{\bm x}', \hat{\bm a}_{\hat S}' \cdot \hat{\bm x}' + \hat{\bm b}_{\hat S})  =: \bm F_T \circ \bm\Phi_{\hat S}(\hat{\bm x}'),
\end{equation*}
where $\hat S'$ is the projected image of $\hat S$ to the plane $\{x_d = 0\}$.
A similar parametrization can be obtained for the straight $(d-1)$-simplex $\tilde{\bm F}_T(\hat S) =: \tilde S$, which is denoted by $\tilde{\bm F}_S$ and is affine.

We see that the covariant and contravariant vectors $\tilde{\bm g}_\alpha, \tilde{\bm g}^\alpha$, and the covariant and contravariant components of metric tensors $\tilde G_{\alpha\beta}, \tilde G^{\alpha\beta}$ with respect to $\tilde S$ satisfies, for $\alpha, \beta = 1, \dots, d-1$,
\begin{align*}
	&|\tilde{\bm g}_\alpha| \le Ch_S, \quad |\tilde{\bm g}^\alpha| \le Ch_S^{-1}, \\
	&C_1 h_S^{d-1} \le \sqrt{\operatorname{det} \tilde G} = \frac{\operatorname{meas}_{d-1}(\tilde S)}{\operatorname{meas}_{d-1}(\hat S)} \le C_2 h_S^{d-1}, \quad
	|\tilde G_{\alpha\beta}| \le C h_S^2, \quad |\tilde G^{\alpha\beta}| \le C h_S^{-2},
\end{align*}
where $h_S := h_T$ and the regularity of the meshes has been used.
These vectors and components can also be defined for the curved simplex $S$, which are denoted by $\bar{\bm g}_\alpha, \bar{\bm g}^\alpha, \bar G_{\alpha\beta}, \bar G^{\alpha\beta}$.
Because $\bm F_S$ is a perturbation of $\tilde{\bm F}_S$, they satisfy the following estimates.
\begin{prop} \label{prop: transformation between S and Shat}
	(i) Let $m = 0, \dots, k$, and $\alpha, \beta = 1, \dots, d-1$.
	Then, for $S \in \mathcal S_h$ we have
	\begin{align*}
		&\|\nabla_{\hat{\bm x}'}^m \bar{\bm g}_\alpha\|_{L^\infty(\hat S')} \le Ch_S^{m+1}, \quad \|\nabla_{\hat{\bm x}'}^m \bar{\bm g}^\alpha\|_{L^\infty(\hat S')} \le Ch_S^{m-1}, \\
		&\|\nabla_{\hat{\bm x}'}^m \bar G_{\alpha\beta}\|_{L^\infty(\hat S')} \le C h_S^{m+2}, \quad \|\nabla_{\hat{\bm x}'}^m \bar G^{\alpha\beta}\|_{L^\infty(\hat S')} \le C h_S^{m-2}, \\
		&C_1 h_S^{(d-1)} \le \sqrt{\operatorname{det} \bar G} \le C_2 h_S^{(d-1)}.
	\end{align*}
	
	(ii) For $v \in H^m(S)$ we have
	\begin{align*}
		\|\nabla_{S}^m v\|_{L^2(S)} \le C h_S^{-m + (d-1)/2} \|v \circ \bm F_S\|_{H^m(\hat S')}, \qquad
		\|\nabla_{\hat{\bm x}'}^m (v \circ \bm F_S)\|_{L^2(\hat S')} \le C h_S^{m - (d-1)/2} \|v\|_{H^m(S)}.
	\end{align*}
\end{prop}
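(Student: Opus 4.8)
The plan is to reduce everything to the factorization $\bm F_S = \bm F_T \circ \bm\Phi_{\hat S}$, in which $\bm\Phi_{\hat S}$ is affine with derivatives depending only on the reference simplex $\hat T$ (there being finitely many faces $\hat S$), so that all $h_S$-dependence is carried by $\bm F_T$. First I would record that the bounds of \rref{rem: after regularity of order k}(iv) together with \ref{H6} give $\|\nabla_{\hat{\bm x}}^m \bm F_T\|_{L^\infty(\hat T)} \le Ch_T^m$ for every $m = 1, \dots, k+1$. Since $\bar{\bm g}_\alpha = (\nabla_{\hat{\bm x}}\bm F_T)(\bm\Phi_{\hat S})\,\partial_{\hat x_\alpha'}\bm\Phi_{\hat S}$ by the chain rule, differentiating $m$ times ($m \le k$) brings in derivatives of $\bm F_T$ up to order $m+1 \le k+1$, each bounded by $Ch_S^{m+1}$; this yields $\|\nabla_{\hat{\bm x}'}^m \bar{\bm g}_\alpha\|_{L^\infty(\hat S')} \le Ch_S^{m+1}$. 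Applying the Leibniz rule to $\bar G_{\alpha\beta} = \bar{\bm g}_\alpha \cdot \bar{\bm g}_\beta$ then gives $\|\nabla_{\hat{\bm x}'}^m \bar G_{\alpha\beta}\|_{L^\infty(\hat S')} \le Ch_S^{m+2}$, and the upper bound $\sqrt{\operatorname{det}\bar G} \le C_2 h_S^{d-1}$ follows since $\operatorname{det}\bar G$ is a degree-$(d-1)$ polynomial in entries of size $O(h_S^2)$.

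The crux is the \emph{lower} bound $\sqrt{\operatorname{det}\bar G} \ge C_1 h_S^{d-1}$, which is the only place where the perturbation-of-affine structure is genuinely needed. Here I would use \ref{H4}: writing $\bm F_T - \tilde{\bm F}_T = \sum_i (\bm a_i - \tilde{\bm F}_T(\hat{\bm a}_i))\hat\phi_i$ and noting $|\bm a_i - \tilde{\bm F}_T(\hat{\bm a}_i)| \le Ch_T^2$, one obtains $\|\nabla_{\hat{\bm x}}(\bm F_T - \tilde{\bm F}_T)\|_{L^\infty(\hat T)} \le Ch_S^2$, hence $|\bar{\bm g}_\alpha - \tilde{\bm g}_\alpha| \le Ch_S^2$ and, by the same splitting as for $\bar G_{\alpha\beta}$, $\bar G_{\alpha\beta} - \tilde G_{\alpha\beta} = O(h_S^3)$. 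Rescaling by $h_S^{-2}$, the matrix $h_S^{-2}\bar G$ differs from $h_S^{-2}\tilde G$ by $O(h_S)$, while $\operatorname{det}(h_S^{-2}\tilde G) = h_S^{-2(d-1)}\operatorname{det}\tilde G \ge C_1 > 0$ by the straight-simplex bound $\sqrt{\operatorname{det}\tilde G} \ge C_1 h_S^{d-1}$ and mesh regularity; hence $\operatorname{det}(h_S^{-2}\bar G) \ge C_1/2$ for $h$ small, giving the claim.

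With $\operatorname{det}\bar G$ bounded below, the contravariant estimates follow by differentiating the inverse. Setting $H := h_S^{-2}\bar G$, Part~(i) gives $\|\nabla_{\hat{\bm x}'}^m H_{\alpha\beta}\|_{L^\infty(\hat S')} \le Ch_S^m \le C$ together with $\operatorname{det}H \ge C > 0$, so an induction on $\nabla(H^{-1}) = -H^{-1}(\nabla H)H^{-1}$ (or a Neumann-series argument) yields $\|\nabla_{\hat{\bm x}'}^m (H^{-1})_{\alpha\beta}\|_{L^\infty(\hat S')} \le Ch_S^m$. Multiplying back by $h_S^{-2}$ gives $\|\nabla_{\hat{\bm x}'}^m \bar G^{\alpha\beta}\|_{L^\infty(\hat S')} \le Ch_S^{m-2}$, and the Leibniz rule applied to $\bar{\bm g}^\alpha = \sum_\beta \bar G^{\alpha\beta}\bar{\bm g}_\beta$ gives $\|\nabla_{\hat{\bm x}'}^m \bar{\bm g}^\alpha\|_{L^\infty(\hat S')} \le Ch_S^{m-1}$, completing Part~(i).

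For Part~(ii) I would use the local-coordinate representation $\nabla_S = \sum_\alpha \bar{\bm g}^\alpha \partial_{\hat x_\alpha'}$ and its inversion $\partial_{\hat x_\alpha'} = \sum_\beta \bar G_{\alpha\beta}(\bar{\bm g}^\beta \cdot \nabla_S)$, exactly as in the proof of \lref{lem: equivalence of Sobolev spaces on Gamma and Gammah}. Expanding $\nabla_S^m v$ (resp.\ $\nabla_{\hat{\bm x}'}^m(v\circ\bm F_S)$) by the product rule produces finitely many terms, each a product of at most $m$ factors $\bar{\bm g}^\alpha$ (resp.\ $\bar G_{\alpha\beta}$) and their derivatives---all controlled by Part~(i)---times a derivative of order $\le m$ of $v\circ\bm F_S$ (resp.\ $v$). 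Since $h_S \le 1$, the leading power of $h_S$ dominates, the coefficients being $O(h_S^{-m})$ in the first case and $O(h_S^{m})$ in the second; passing between $\int_S \cdot\,d\gamma$ and $\int_{\hat S'}\cdot\,d\hat{\bm x}'$ via change of variables introduces the Jacobian $\sqrt{\operatorname{det}\bar G} \sim h_S^{d-1}$, and collecting powers after taking square roots yields the two stated estimates. I expect the only genuinely delicate point to be the lower determinant bound of the second paragraph, as it is the unique ingredient requiring more than the upper derivative bounds on $\bm F_T$; the remaining steps are routine applications of the chain and Leibniz rules combined with crude power counting in $h_S$.
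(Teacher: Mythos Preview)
Your proposal is correct and follows essentially the same route as the paper's proof: both obtain the covariant bounds from the chain rule applied to $\bm F_S=\bm F_T\circ\bm\Phi_{\hat S}$ together with \ref{H6} and \rref{rem: after regularity of order k}(iv), both extract the determinant lower bound from the $O(h_S^2)$ perturbation of $\tilde{\bm F}_T$ furnished by \ref{H4}, and both handle Part~(ii) via $\nabla_S=\sum_\alpha\bar{\bm g}^\alpha\partial_{\hat x_\alpha'}$ and its inversion with the same power counting. Your rescaling $H=h_S^{-2}\bar G$ together with the recursion $\nabla(H^{-1})=-H^{-1}(\nabla H)H^{-1}$ is a slightly cleaner packaging of the contravariant step than the paper's cofactor-formula-plus-induction, but the content is the same.
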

\begin{proof}
	(i) First let $m = 0$.
	Since $\bar{\bm g}_\alpha = (\frac{\partial \bm F_T}{\partial \hat x_\alpha} + \hat a_{\hat S\alpha}' \frac{\partial \bm F_T}{\partial \hat x_d})|_{\bm\Phi_{\hat S}}$, we have $\|\bar{\bm g}_\alpha\|_{L^\infty(\hat S')} \le Ch_S$, so that $\|\bar G_{\alpha\beta}\|_{L^\infty(\hat S')} \le C h_S^{2}$.
	By assumption \ref{H4}, we also get $\|\bar{\bm g}_\alpha - \tilde{\bm g}_\alpha\|_{L^\infty(\hat S')} \le Ch_S^2$ and $\|\bar G_{\alpha\beta} - \tilde G_{\alpha\beta}\|_{L^\infty(\hat S')} \le C h_S^3$, which allows us to bound $\operatorname{det} \bar G$ from above and below.
	This combined with the formula $\bar G^{-1} = (\operatorname{det} \bar G)^{-1} \operatorname{Cof} \bar G$ yields $\|\bar G^{\alpha\beta}\|_{L^\infty(\hat S')} \le C h_S^{-2}$, and, consequently, $\|\bar{\bm g}^\alpha\|_{L^\infty(\hat S')} \le Ch_S^{-1}$.
	
	The case $m \ge 1$ can be addressed by induction using assumption \ref{H6}.
	
	(ii) The first inequality is a result of $\nabla_S = \sum_{\alpha = 1}^{d-1} \bar{\bm g}^\alpha \frac{\partial}{\partial \hat x_\alpha}$ and (i).
	To show the second inequality, its inverted formula
	\begin{equation*}
		\frac{\partial}{\partial \hat x_\alpha} = \sum_{\beta = 1}^{d-1} \bar G_{\alpha\beta} (\bar{\bm g}^\beta \cdot \nabla_S)
	\end{equation*}
	is useful.
	We also notice the following for the case $m \ge 2$: even when $\nabla_S$ is acted on $\bar G_{\alpha\beta}, \bar{\bm g}^\beta$, or on their derivatives rather than on $v$, the $L^\infty$-bounds of them---in terms of the order of $h_S$---are the same as in the case where all the derivatives are applied to $v$.
	For example,
	\begin{equation*}
		\|\nabla_{\!S} \, \bar G_{\alpha\beta}\|_{L^\infty(\hat S')} = \Big\| \sum_{\alpha = 1}^{d-1} \bar{\bm g}^\alpha \frac{\partial \bar G_{\alpha\beta}}{\partial \hat x_\alpha} \Big\|_{L^\infty(\hat S')} \le Ch_S^{-1} \times Ch_S^3 = Ch_S^2,
	\end{equation*}
	which can be compared with $\|\bar G_{\alpha\beta}\|_{L^\infty(\hat S')} \le Ch_S^2$.
	Therefore, 
	\begin{align*}
		\|\nabla_{\hat{\bm x}'}^m (v \circ \bm F_S)\|_{L^2(\hat S')} &\le C (h_S^2 h_S^{-1})^m h_S^{(1-d)/2}
			\bigg[ \sum_{l = 0}^{k+1} \int_{\hat S'} \Big| \Big( \sum_{\alpha = 1}^{d-1} \bar{\bm g}^\alpha \frac{\partial}{\partial \hat x_\alpha} \Big)^l (v \circ \bm F_S) \Big|^2 \sqrt{\operatorname{det} \bar G} \, d\hat{\bm x}' \bigg]^{1/2} \\
		&= C h_S^{m - (d-1)/2} \|v\|_{H^{k+1}(S)},
	\end{align*}
	which is the desired estimate.
\end{proof}

In particular, if $\bm p \in \mathcal N_h \cap S$ and $\bm p = \bm F_T(\hat{\bm a}_i)$, we obtain
\begin{equation} \label{eq: nodal basis estimate on S}
	\|\nabla_S^m \phi_{\bm p}\|_{L^2(S)} \le Ch_S^{-m + (d-1)/2}.
\end{equation}

% 3.2
\subsection{Scott--Zhang interpolation operator}
We need the interpolation operator $\mathcal I_h$ introduced by \cite{ScoZha1990}, which is well-defined and stable in $H^1(\Omega_h)$.
We show that it is also stable in $H^1(\Gamma_h)$ on the boundary.
To each node $\bm p \in \mathcal N_h$ we assign $\sigma_{\bm p}$, which is either a $d$-curved simplex or $(d-1)$-curved simplex, in the following way:
\begin{itemize}
	\item If $\bm p \in \mathring{\mathcal N}_h$, we set $\sigma_{\bm p}$ to be one of the elements $T \in \mathcal T_h$ containing $\bm p$.
	\item If $\bm p \in \mathcal N_h^\partial$, we set $\sigma_{\bm p}$ to be one of the boundary elements $S \in \mathcal S_h$ containing $\bm p$.
\end{itemize}
For each $\bm p \in \mathcal N_h$, we see that $V_h|_{\sigma_{\bm p}}$ (the restrictions to $\sigma_{\bm p}$ of the functions in $V_h$) is a finite dimensional subspace of the Hilbert space $L^2(\sigma_{\bm p})$.
We denote by $\psi_{\bm q}$ the dual basis function corresponding to $\phi_{\bm p}$ with respect to $L^2(\sigma_{\bm p})$, that is, $\{\psi_{\bm q}\}_ {\bm q \in \mathcal N_h} \subset V_h$ is determined by
\begin{equation*}
	(\phi_{\bm p}, \psi_{\bm q})_{L^2(\sigma_{\bm p})} = 
	\begin{cases}
		1 & \text{if $\bm p = \bm q$}, \\
		0 & \text{otherwise},
	\end{cases}
	\qquad \forall \bm p \in \mathcal N_h.
\end{equation*}
The support of $\psi_{\bm p}$ is contained in a ``macro element'' of $\sigma_{\bm p}$.
In fact, depending on the cases $\sigma_{\bm p} = T \in \mathcal T_h$ and $\sigma_{\bm p} = S \in \mathcal S_h$, it holds that
\begin{align*}
	\operatorname{supp} \psi_{\bm p} \subset M_T &:= \bigcup \mathcal T_h(T), \quad \mathcal T_h(T) := \{T_1 \in \mathcal T_h \mid T_1 \cap T \neq \emptyset \}, \\
	\operatorname{supp} \psi_{\bm p} \subset M_S &:= \bigcup \mathcal S_h(S), \quad \mathcal S_h(S) := \{S_1 \in \mathcal S_h \mid S_1 \cap S \neq \emptyset \}.
\end{align*}

Now we define $\mathcal I_h : H^1(\Omega_h) \to V_h$ by
\begin{equation*}
	\mathcal I_h v = \sum_{\bm p \in \mathcal N_h} (v, \psi_{\bm p})_{L^2(\sigma_{\bm p})} \phi_{\bm p}.
\end{equation*}
By direct computation one can check $\mathcal I_h v_h = v_h$ for $v_h \in V_h$.
This invariance indeed holds at local level as shown in the lemma below.
To establish it, we first notice that $\mathcal I_h v$ in $T \in \mathcal T_h$ (resp.\ in $S \in \mathcal S_h$) is completely determined by $v$ in $M_T$ (resp.\ in $M_S$), which allows us to exploit the notation $(\mathcal I_h v)|_T$ for $v \in H^1(M_T)$ (resp.\ $(\mathcal I_h v)|_S$ for $v \in H^1(M_S)$).
\begin{rem}
	The choices of $\{\sigma_{\bm p}\}_{\bm p\in\mathcal N_h}$ and $\{\psi_{\bm p}\}_{\bm p \in \mathcal N_h}$ are not unique.
	Although the definition of $\mathcal I_h$ are dependent on those choices, the norm estimates below only depends on the shape-regularity constant and on a reference element.
\end{rem}

\begin{lem}
	Let $\bm p \in \mathcal N_h$ and $v \in H^1(\Omega_h)$.
	
	(i) If $\sigma_{\bm p} = T \in \mathcal T_h$, then
	\begin{equation*}
		\|\psi_{\bm p}\|_{L^\infty(T)} \le C h_T^{-d}.
	\end{equation*}
	Moreover, if $v \circ \bm F_{T_1} \in \mathbb P_k(\hat T)$ for $T_1 \in \mathcal T_h(T)$, then $(\mathcal I_h v)|_{T} = v|_{T}$.
	
	(ii) If $\sigma_{\bm p} = S \in \mathcal S_h$, then
	\begin{equation} \label{eq: Linfty norm of psip}
		\|\psi_{\bm p}\|_{L^\infty(S)} \le C h_S^{1-d}.
	\end{equation}
	Moreover, if $v \circ \bm F_{S_1} \in \mathbb P_k(\hat S')$ for $S_1 \in \mathcal S_h(S)$, then $(\mathcal I_h v)|_{S} = v|_{S}$.
\end{lem}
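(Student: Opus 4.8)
The plan is to treat both statements purely locally on the single simplex $\sigma_{\bm p}$ and to reduce everything to a fixed reference configuration via the parametric map $\bm F_T$ in case (i) and $\bm F_S$ in case (ii). For the $L^\infty$-bounds, I would write $\psi_{\bm p}$ in the nodal basis of the finite-dimensional space $V_h|_{\sigma_{\bm p}}$: since $\psi_{\bm p}\in V_h|_{\sigma_{\bm p}}$ we have $\psi_{\bm p}=\sum_{\bm q}c_{\bm q}\phi_{\bm q}$ on $\sigma_{\bm p}$, the sum running over the nodes $\bm q$ of $\sigma_{\bm p}$, and the defining relation $(\phi_{\bm q},\psi_{\bm p})_{L^2(\sigma_{\bm p})}=\delta_{\bm p\bm q}$ becomes the linear system $M\bm c=\bm e_{\bm p}$, where $M=\big((\phi_{\bm q},\phi_{\bm r})_{L^2(\sigma_{\bm p})}\big)$ is the local mass matrix and $\bm e_{\bm p}$ the unit vector at the index of $\bm p$. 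Pulling the integrals back to $\hat T$ (resp.\ $\hat S'$) turns $M_{\bm q\bm r}$ into $\int_{\hat T}\hat\phi_j\hat\phi_l\,|\det\nabla_{\hat{\bm x}}\bm F_T|\,d\hat{\bm x}$ (resp.\ the analogous surface integral weighted by $\sqrt{\det\bar G}$), with $\bm q=\bm F(\hat{\bm a}_j)$ and $\bm r=\bm F(\hat{\bm a}_l)$. The key point is the two-sided bound $C_1h_T^d\le|\det\nabla_{\hat{\bm x}}\bm F_T|\le C_2h_T^d$ from \rref{rem: after regularity of order k}(iv) (resp.\ $C_1h_S^{d-1}\le\sqrt{\det\bar G}\le C_2h_S^{d-1}$ from \pref{prop: transformation between S and Shat}(i)), which makes $M$ spectrally equivalent to $h_T^d$ (resp.\ $h_S^{d-1}$) times the fixed, positive-definite reference Gram matrix of $\{\hat\phi_i\}$. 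Hence $\|M^{-1}\|\le Ch_T^{-d}$ (resp.\ $Ch_S^{1-d}$), so $|\bm c|\le Ch_T^{-d}$ (resp.\ $Ch_S^{1-d}$); combining with the $h$-independent bound $\max_i\|\hat\phi_i\|_{L^\infty(\hat T)}\le C$ and the invariance $\|\psi_{\bm p}\|_{L^\infty(\sigma_{\bm p})}=\|\psi_{\bm p}\circ\bm F\|_{L^\infty}$ yields the two asserted estimates.

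For the reproduction identities, I would start from $(\mathcal I_h v)|_T=\sum_{\bm p\in T}(v,\psi_{\bm p})_{L^2(\sigma_{\bm p})}\,\phi_{\bm p}|_T$, using that $\phi_{\bm p}|_T=0$ whenever $\bm p\notin T$. For each node $\bm p\in T$ the simplex $\sigma_{\bm p}$ shares $\bm p$ with $T$, hence $\sigma_{\bm p}\subset M_T$, and the hypothesis then guarantees that $v|_{\sigma_{\bm p}}$ is a $\mathbb P_k$ finite element function, so it coincides with its nodal interpolant $v|_{\sigma_{\bm p}}=\sum_{\bm q}v(\bm q)\,\phi_{\bm q}|_{\sigma_{\bm p}}$ by unisolvence of $\hat\Sigma_k$ in \ref{H6}'s setting. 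The dual-basis relation then collapses the pairing to a single nodal value, $(v,\psi_{\bm p})_{L^2(\sigma_{\bm p})}=\sum_{\bm q}v(\bm q)\delta_{\bm p\bm q}=v(\bm p)$, so that $(\mathcal I_h v)|_T=\sum_{\bm p\in T}v(\bm p)\,\phi_{\bm p}|_T=v|_T$. Case (ii) is identical after the observation that every node of $S\subset\Gamma_h$ is a boundary node, whence each associated $\sigma_{\bm p}$ is a boundary simplex in $\mathcal S_h(S)\subset M_S$---precisely the patch on which $v$ is assumed to be piecewise $\mathbb P_k$.

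The only genuinely delicate ingredient is the uniform spectral equivalence of the local mass matrix to its reference counterpart: everything rests on the bounds for $|\det\nabla_{\hat{\bm x}}\bm F_T|$ and $\sqrt{\det\bar G}$ being uniform in $h$ and in the element, which is exactly what the regularity assumptions \ref{H4}--\ref{H6} and \pref{prop: transformation between S and Shat} provide. Once that equivalence is established, the norm estimates follow by a scaling argument and the reproduction properties reduce to the dual-basis bookkeeping above, both routine.
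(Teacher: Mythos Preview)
Your proposal is correct and follows essentially the same route as the paper's proof. The paper also expands $\psi_{\bm p}$ in the nodal basis on $\sigma_{\bm p}$ and inverts the local mass matrix, the only cosmetic difference being that it bounds $\|M^{-1}\|$ via the cofactor formula $M^{-1}=(\det M)^{-1}\operatorname{Cof}M$ together with componentwise bounds $|M_{\bm q\bm r}|\le Ch_S^{d-1}$ and $\det M\ge Ch_S^{D(d-1)}$, whereas you use the cleaner spectral-equivalence argument based on the two-sided Jacobian bounds. For the reproduction identity the paper's argument is identical to yours: restrict the sum to nodes in $S$ (resp.\ $T$), observe that each $\sigma_{\bm q}$ then lies in the relevant patch, expand $v|_{\sigma_{\bm q}}$ in the local nodal basis, and collapse via the biorthogonality relation. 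One small point worth making explicit in case (i): when $\bm p\in T$ is a boundary node, $\sigma_{\bm p}$ is a \emph{surface} element $S_1$, and you need that the volume hypothesis $v\circ\bm F_{T_{S_1}}\in\mathbb P_k(\hat T)$ restricts to $v\circ\bm F_{S_1}\in\mathbb P_k(\hat S')$; this is immediate but your write-up glosses over it.
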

\begin{proof}
	We consider only case (ii); case (i) can be treated similarly.
	We can represent $\psi_{\bm p}$ as
	\begin{equation*}
		\psi_{\bm p} = \sum_{\bm q \in \mathcal N_h \cap M_S} C_{\bm p \bm q} \phi_{\bm q},
	\end{equation*}
	where $C = (C_{\bm p \bm q})$ is the inverse matrix of $A = ((\phi_{\bm p}, \phi_{\bm q})_{L^2(S)})$ (its dimension is supposed to be $D$).
	Note that each component of $A$ is bounded by $Ch_S^{d-1}$ and that $\operatorname{det} A \ge Ch_S^{D(d-1)}$.
	Therefore, each component of $C = (\operatorname{det} A)^{-1} \operatorname{Cof} A$ is bounded by $Ch_S^{(1-d)}$.
	This combined with $\|\phi_{\bm q}\|_{L^\infty(S)} \le C$ proves \eref{eq: Linfty norm of psip}.
	
	To show the second statement, observe that
	\begin{equation} \label{eq: Ih v restricted to S}
		(\mathcal I_h v)|_{S} = \sum_{\bm q \in \mathcal N_h} (v, \psi_{\bm q})_{L^2(\sigma_{\bm q})} \phi_{\bm q}|_S.
	\end{equation}
	However, $\phi_{\bm q}|_S$ is non-zero only if $\bm q \in S$, in which case $\sigma_{\bm q} \in \mathcal S_h(S)$.
	Therefore, $v|_{\sigma_{\bm q}}$ is represented as a linear combination of $\phi_{\bm s}|_{\sigma_{\bm q}} \, (\bm s \in \mathcal N_h \cap \sigma_{\bm q})$.
	This implies that \eref{eq: Ih v restricted to S} agrees with $v|_S$.
\end{proof}

Let us establish the stability of $\mathcal I_h$, which is divided into two lemmas and is proved in Appendix \ref{sec: stability of Ih}.

\begin{lem} \label{lem1: stability of Ih}
	Let $v \in H^1(\Omega_h; \Gamma_h)$, $T \in \mathcal T_h$, and $S \in \mathcal S_h$.
	Then for $m = 0, 1$ we have
	\begin{align}
		\|\nabla^m (\mathcal I_h v)\|_{L^2(T)} \le C \sum_{l=0}^1 h_T^{l-m} \sum_{T_1 \in \mathcal T_h(T)} \|\nabla^l v\|_{L^2(T_1)}, \notag \\
		\|\nabla_S^m (\mathcal I_h v)\|_{L^2(S)} \le C \sum_{l=0}^1 h_S^{l-m} \sum_{S_1 \in \mathcal S_h(S)} \|\nabla_{S_1}^l v\|_{L^2(S_1)}, \label{eq: local stability of Ihv on S}
	\end{align}
	where $\mathcal T_h(T) = \{T_1 \in \mathcal T_h \mid T_1 \cap T \neq \emptyset\}$ and $\mathcal S_h(S) = \{S_1 \in \mathcal S_h \mid S_1 \cap S \neq \emptyset\}$.
\end{lem}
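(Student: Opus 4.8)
The plan is to prove the stronger, purely stable bound
\[
	\|\nabla_S^m(\mathcal I_h v)\|_{L^2(S)} \le C h_S^{-m} \sum_{S_1 \in \mathcal S_h(S)} \|v\|_{L^2(S_1)}
\]
(and its evident interior analogue on $T$), from which the asserted estimate follows at once, since the omitted term $C h_S^{1-m} \sum_{S_1} \|\nabla_{S_1} v\|_{L^2(S_1)}$ on the right-hand side is nonnegative. No polynomial-reproduction argument is needed, because the claimed inequality is a stability estimate---it carries the blow-up factor $h_S^{-m}$---rather than an approximation estimate. First I would restrict the nodal expansion $\mathcal I_h v = \sum_{\bm p \in \mathcal N_h} (v, \psi_{\bm p})_{L^2(\sigma_{\bm p})} \phi_{\bm p}$ to $S$. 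Since $\phi_{\bm q}|_S \neq 0$ forces $\bm q \in S$ (a $\mathbb P_k$ shape function on $S$ vanishing at all face nodes is identically zero), and for such a boundary node the patch $\sigma_{\bm q}$ is a boundary element containing $\bm q$, hence meeting $S$, we have $\sigma_{\bm q} \in \mathcal S_h(S)$; moreover the number of contributing nodes is bounded by $N_k$, a constant depending only on $k$ and $d$.

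For each contributing node I would estimate the coefficient by Cauchy--Schwarz, $|(v, \psi_{\bm q})_{L^2(\sigma_{\bm q})}| \le \|v\|_{L^2(\sigma_{\bm q})} \|\psi_{\bm q}\|_{L^2(\sigma_{\bm q})}$, and bound the dual basis function by combining its $L^\infty$ estimate \eref{eq: Linfty norm of psip} with the measure bound $|\sigma_{\bm q}| \le C h_{\sigma_{\bm q}}^{d-1}$, which comes from $\sqrt{\operatorname{det} \bar G} \le C h_S^{d-1}$ in \pref{prop: transformation between S and Shat}(i); using shape-regularity (H2) to replace $h_{\sigma_{\bm q}}$ by $h_S$, this gives $\|\psi_{\bm q}\|_{L^2(\sigma_{\bm q})} \le C h_S^{1-d} h_S^{(d-1)/2} = C h_S^{(1-d)/2}$. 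Inserting this together with the nodal-basis gradient estimate \eref{eq: nodal basis estimate on S}, namely $\|\nabla_S^m \phi_{\bm q}\|_{L^2(S)} \le C h_S^{-m + (d-1)/2}$, into the triangle inequality yields
\[
	\|\nabla_S^m(\mathcal I_h v)\|_{L^2(S)} \le \sum_{\bm q \in S} \|v\|_{L^2(\sigma_{\bm q})} \, C h_S^{(1-d)/2} \, C h_S^{-m + (d-1)/2} = C h_S^{-m} \sum_{\bm q \in S} \|v\|_{L^2(\sigma_{\bm q})},
\]
where the powers $(1-d)/2$ and $(d-1)/2$ cancel exactly. Summing over the finitely many contributing nodes and recalling $\sigma_{\bm q} \in \mathcal S_h(S)$ converts the right-hand side into $C h_S^{-m} \sum_{S_1 \in \mathcal S_h(S)} \|v\|_{L^2(S_1)}$, as desired. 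Here the coefficients $(v,\psi_{\bm q})_{L^2(\sigma_{\bm q})}$ are well defined because $v \in H^1(\Omega_h; \Gamma_h)$ has a trace in $L^2(\Gamma_h)$.

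The interior estimate over $T$ is carried out identically, using the bound $\|\psi_{\bm p}\|_{L^\infty(T)} \le C h_T^{-d}$, the volume bound $|T| \le C h_T^d$ from \rref{rem: after regularity of order k}(iv), and $\|\nabla^m \phi_{\bm p}\|_{L^2(T)} \le C h_T^{-m + d/2}$; again the $h_T$-powers collapse to the single factor $h_T^{-m}$. The only points demanding care are the bookkeeping of the powers of $h_S$ (resp.\ $h_T$) so that they cancel to $h_S^{-m}$, and the finite-overlap argument, which relies on shape-regularity to control both the number of contributing nodes and the sizes of neighbouring elements relative to $h_S$. There is no genuine analytic obstacle; the essential observation is simply that the stated right-hand side, carrying the negative power $h_S^{-m}$, is already produced by the crude nodal bound, so local polynomial invariance need not be invoked.
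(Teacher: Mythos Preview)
Your boundary estimate is correct and essentially reproduces the paper's argument: where the paper pairs $L^1$ with $L^\infty$ and then uses $\operatorname{meas}_{d-1}(S_1)\le Ch_S^{d-1}$ to pass to $L^2$, you apply Cauchy--Schwarz directly and bound $\|\psi_{\bm q}\|_{L^2}$ via its $L^\infty$ norm and the same measure bound---the two computations are the same up to the order of steps. You also correctly observe that the paper's last step on $S$, bounding $\|v\|_{L^2(S_1)}$ by $C\sum_{l=0}^1 h_S^l\|\nabla_{S_1}^l v\|_{L^2(S_1)}$ via the reference element, is redundant there, since the $l=0$ term already equals $\|v\|_{L^2(S_1)}$.

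The interior estimate, however, has a genuine gap. Your ``evident interior analogue'' $\|\nabla^m(\mathcal I_h v)\|_{L^2(T)}\le C h_T^{-m}\sum_{T_1\in\mathcal T_h(T)}\|v\|_{L^2(T_1)}$ is in general \emph{false}. If $T$ meets $\Gamma_h$ then some nodes $\bm p\in T$ are boundary nodes, and by the construction of $\mathcal I_h$ one has $\sigma_{\bm p}\in\mathcal S_h$, a \emph{face}, not an element; the bound $\|\psi_{\bm p}\|_{L^\infty(T)}\le Ch_T^{-d}$ you invoke applies only when $\sigma_{\bm p}\in\mathcal T_h$. The coefficient $(v,\psi_{\bm p})_{L^2(\sigma_{\bm p})}$ is then a surface integral controlled by $\|v\|_{L^2(S_1)}$, and this quantity is \emph{not} dominated by the volume norms $\|v\|_{L^2(T_1)}$ alone (a function concentrated in a thin layer near $S_1$ shows the failure). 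One must invoke the scaled trace inequality \eref{eq: local trace estimate}, which gives $\|v\|_{L^2(S_1)}\le C h_T^{-1/2}\|v\|_{L^2(T_{S_1})}+C h_T^{1/2}\|\nabla v\|_{L^2(T_{S_1})}$; multiplying by the factor $h_T^{-m+1/2}$ coming from $h_T^{(1-d)/2}\cdot h_T^{-m+d/2}$ yields precisely $h_T^{-m}\|v\|_{L^2}+h_T^{1-m}\|\nabla v\|_{L^2}$ on $T_{S_1}\in\mathcal T_h(T)$. So for the $T$-estimate the $l=1$ term is not padding but is needed to absorb the trace of $v$ at boundary nodes.
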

\begin{lem} \label{lem2: stability of Ih}
	Under the same assumptions as in \lref{lem1: stability of Ih}, we have
	\begin{align}
		\|v - \mathcal I_h v\|_{H^m(T)} &\le Ch_T^{1-m} \sum_{T_1 \in \mathcal T_h(T)} \|v\|_{H^1(T_1)}, \notag \\
		\|v - \mathcal I_h v\|_{H^m(S)} &\le Ch_S^{1-m} \sum_{S_1 \in \mathcal S_h(S)} \|v\|_{H^1(S_1)}. \label{eq: local interpolation error on S}
	\end{align}
\end{lem}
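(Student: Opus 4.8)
The plan is to reduce each of the two estimates to a first-order approximation-by-constants result and then combine it with the local stability bounds of \lref{lem1: stability of Ih} and the local polynomial invariance of $\mathcal I_h$ proved just above. Since a constant $c$ satisfies $c \circ \bm F_{T_1} \in \mathbb P_k(\hat T)$ for every $T_1 \in \mathcal T_h(T)$ (and likewise $c \circ \bm F_{S_1} \in \mathbb P_k(\hat S')$ for $S_1 \in \mathcal S_h(S)$), the invariance lemma gives $\mathcal I_h c = c$ on $T$ (resp.\ on $S$); hence for any constant $c$ we may write
\begin{equation*}
	v - \mathcal I_h v = (v - c) - \mathcal I_h(v - c).
\end{equation*}
The entire argument then hinges on choosing $c$ as a suitable mean of $v$ over the macro patch and controlling both summands through a scaled Poincar\'e--Wirtinger inequality.

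For the bulk estimate I would apply the triangle inequality to this identity and bound $\|\mathcal I_h(v - c)\|_{H^m(T)}$ by \lref{lem1: stability of Ih}, obtaining
\begin{equation*}
	\|\mathcal I_h(v - c)\|_{H^m(T)} \le C h_T^{-m} \sum_{T_1 \in \mathcal T_h(T)} \|v - c\|_{L^2(T_1)} + C h_T^{1-m} \sum_{T_1 \in \mathcal T_h(T)} \|\nabla v\|_{L^2(T_1)}.
\end{equation*}
Taking $c$ to be the average of $v$ over $M_T = \bigcup \mathcal T_h(T)$, a scaled Poincar\'e inequality $\|v - c\|_{L^2(M_T)} \le C h_T \|\nabla v\|_{L^2(M_T)}$ turns the first sum into a term of order $h_T^{1-m}$; the remaining direct contribution $\|v - c\|_{H^m(T)}$ is treated the same way (using $h \le 1$ from \rref{rem: after regularity of order k}(i) to absorb the lower-order piece when $m = 1$). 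Summing over the patch and using $\|\nabla v\|_{L^2(T_1)} \le \|v\|_{H^1(T_1)}$ yields the stated bound.

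The boundary estimate proceeds identically, replacing the bulk stability bound by \eqref{eq: local stability of Ihv on S}, the gradient by the surface gradient $\nabla_S$, and $M_T$ by $M_S = \bigcup \mathcal S_h(S)$. Here $c$ is the surface average of $v$ over $M_S$, and the needed scaled surface Poincar\'e--Wirtinger inequality $\|v - c\|_{L^2(M_S)} \le C h_S \|\nabla_S v\|_{L^2(M_S)}$ would be obtained by pulling each curved face back through $\bm F_S$ to the reference patch via \pref{prop: transformation between S and Shat}(ii), invoking the standard Poincar\'e inequality on the reference configuration, and pushing forward while tracking the powers of $h_S$ supplied by the Jacobian estimates of \pref{prop: transformation between S and Shat}(i).

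I expect the principal obstacle to be exactly this scaled Poincar\'e--Wirtinger inequality on the macro patches with a constant independent of $h$: the patches $M_T, M_S$ are unions of a shape-regularity-bounded number of \emph{curved} simplices, so no single fixed reference configuration is available. I would secure the uniform constant by a rescaling argument—dilating $M_T$ (resp.\ $M_S$) by $h_T^{-1}$ (resp.\ $h_S^{-1}$) to unit size—where connectedness of the patch together with the uniform perturbation bounds \ref{H4}--\ref{H6} (resp.\ \pref{prop: transformation between S and Shat}) guarantee a Poincar\'e constant depending only on the shape-regularity constant and the reference element. The curved boundary case is the more delicate one, as one must verify that the metric distortion does not degrade the $h_S$-scaling.
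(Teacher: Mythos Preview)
Your proposal is correct and follows essentially the same route as the paper's proof: subtract a constant preserved by $\mathcal I_h$, apply the stability estimate of \lref{lem1: stability of Ih}, and control $\|v - c\|$ on the macro patch by a first-order Poincar\'e/Bramble--Hilbert argument with an $h$-independent constant. The only notable difference is in how that last constant is secured: you propose a rescaling-to-unit-size argument on the physical patch $M_S$, whereas the paper constructs a single piecewise-$\mathbb P_k$ map $\bm F_{M_S}$ from one of finitely many straight reference macro elements $\hat M_S'$ onto $M_S$ and applies Bramble--Hilbert (with $\hat P$ constant) on the fixed reference configuration; this absorbs all the curving into $\bm F_{M_S}$ and makes the uniformity of the constant immediate, which is slightly cleaner than tracking how the Poincar\'e constant behaves on the rescaled curved patches.
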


Adding up \eref{eq: local interpolation error on S} for $S \in \mathcal S_h$ immediately leads to a global estimate (note that the regularity of the meshes implies $\sup_{S\in\mathcal S_h} \#\mathcal S_h(S) \le C$).
Together with an estimate in $\Omega_h$, which can be obtained in a similar manner, we state it as follows:
\begin{cor} \label{cor: H1 interpolation estimate}
	Let $m = 0, 1$ and $v \in H^1(\Omega_h; \Gamma_h)$. Then
	\begin{equation*}
		\|v - \mathcal I_h v\|_{H^m(\Omega_h)} \le Ch^{1-m} \|v\|_{H^1(\Omega_h)}, \qquad
		\|v - \mathcal I_h v\|_{H^m(\Gamma_h)} \le Ch^{1-m} \|v\|_{H^1(\Gamma_h)}.
	\end{equation*}
\end{cor}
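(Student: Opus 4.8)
The plan is to derive both global bounds by squaring the local estimates of \lref{lem2: stability of Ih} and summing over all elements, exploiting the fact that a regular family of meshes has \emph{finite overlap}. I treat the two inequalities in parallel, writing out the bulk case in $\Omega_h$ in detail; the boundary case on $\Gamma_h$ is identical with $S$, $\mathcal S_h(S)$, $\Gamma_h$ replacing $T$, $\mathcal T_h(T)$, $\Omega_h$.

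First I would fix $m \in \{0, 1\}$ and observe that, since $h_T \le h \le 1$ (recall \rref{rem: after regularity of order k}(i)) and $1 - m \ge 0$, we have $h_T^{1-m} \le h^{1-m}$ for every $T \in \mathcal T_h$. Squaring the first estimate in \lref{lem2: stability of Ih} and applying the Cauchy--Schwarz inequality to the sum over neighbours then gives, for each $T$,
\begin{equation*}
	\|v - \mathcal I_h v\|_{H^m(T)}^2 \le C h^{2(1-m)} \, \#\mathcal T_h(T) \sum_{T_1 \in \mathcal T_h(T)} \|v\|_{H^1(T_1)}^2 \le C h^{2(1-m)} \sum_{T_1 \in \mathcal T_h(T)} \|v\|_{H^1(T_1)}^2,
\end{equation*}
where the last step uses the uniform bound $\sup_{T \in \mathcal T_h} \#\mathcal T_h(T) \le C$ guaranteed by the regularity of the meshes.

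Next I would sum over $T \in \mathcal T_h$ and interchange the order of summation in the resulting double sum. The crucial point is that the neighbour relation is symmetric, $T_1 \in \mathcal T_h(T) \iff T_1 \cap T \neq \emptyset \iff T \in \mathcal T_h(T_1)$, so each $T_1$ is counted at most $\#\mathcal T_h(T_1) \le C$ times. This yields
\begin{equation*}
	\|v - \mathcal I_h v\|_{H^m(\Omega_h)}^2 = \sum_{T \in \mathcal T_h} \|v - \mathcal I_h v\|_{H^m(T)}^2 \le C h^{2(1-m)} \sum_{T_1 \in \mathcal T_h} \|v\|_{H^1(T_1)}^2 = C h^{2(1-m)} \|v\|_{H^1(\Omega_h)}^2,
\end{equation*}
and taking square roots gives the first claimed bound; the boundary bound follows verbatim by summing \eref{eq: local interpolation error on S} over $S \in \mathcal S_h$.

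Since every step is an elementary consequence of \lref{lem2: stability of Ih} and the finite-overlap property, I do not anticipate any genuine obstacle here. The only points requiring a little care are the double-counting bookkeeping in the interchange of summation, which is handled by the symmetry of the neighbour relation, and the trivial but necessary remark that $h_T^{1-m} \le h^{1-m}$ holds simultaneously for $m = 0$ and $m = 1$ precisely because $h \le 1$.
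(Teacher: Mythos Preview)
Your proof is correct and follows the same approach as the paper: sum the local estimates of \lref{lem2: stability of Ih} over all elements and use the finite-overlap bound $\sup_{T}\#\mathcal T_h(T)\le C$ (resp.\ $\sup_{S}\#\mathcal S_h(S)\le C$) coming from mesh regularity. The paper states this in one sentence while you spell out the squaring and double-counting argument, but the content is identical.
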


% 3.3
\subsection{Interpolation error estimates}
First we recall the definition of the Lagrange interpolation operator and its estimates.
Define $\mathcal I_h^L : C(\overline\Omega_h) \to V_h$ by
\begin{equation*}
	\mathcal I_h^L v = \sum_{\bm p \in \mathcal N_h} v(\bm p) \phi_{\bm p}.
\end{equation*}
We allow the notation $(\mathcal I_h^L v)|_T$ if $v \in C(T)$, $T \in \mathcal T_h$, and $(\mathcal I_h^L v)|_S$ if $v \in C(S)$, $S \in \mathcal S_h$.

\begin{prop}
	Let $T \in \mathcal T_h$ and $S \in \mathcal S_h$.
	Assume $k + 1 > d/2$, so that $H^{k+1}(T) \hookrightarrow C(T)$ and $H^{k+1}(S) \hookrightarrow C(S)$ hold.
	Then, for $0\le m\le k+1$ we have
	\begin{align}
		\|\nabla^m(v - \mathcal I_h^L v)\|_{L^2(T)} &\le Ch_T^{k+1 - m} \|v\|_{H^{k+1}(T)} \qquad \forall v \in H^{k+1}(T), \label{eq: Lagrange interpolation error estimate in T} \\
		\|\nabla_S^m(v - \mathcal I_h^L v)\|_{L^2(S)} &\le Ch_S^{k+1 - m} \|v\|_{H^{k+1}(S)} \qquad \forall v \in H^{k+1}(S). \label{eq: Lagrange interpolation error estimate on S}
	\end{align}
\end{prop}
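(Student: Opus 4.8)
The plan is to reduce everything to the reference element and combine a polynomial-invariance (Bramble--Hilbert) argument there with the anisotropic scaling estimates already established above. The decisive structural fact is that Lagrange interpolation commutes with the isoparametric pullback: since $\phi_{\bm p}\circ\bm F_T=\hat\phi_i$ and $v(\bm p)=\hat v(\hat{\bm a}_i)$ with $\hat v:=v\circ\bm F_T$, one has $(\mathcal I_h^L v)\circ\bm F_T=\hat{\mathcal I}\hat v$, where $\hat{\mathcal I}:C(\hat T)\to\mathbb P_k(\hat T)$ is the standard Lagrange interpolation on the reference simplex. The hypothesis $k+1>d/2$ guarantees $H^{k+1}(\hat T)\hookrightarrow C(\hat T)$, so $\hat{\mathcal I}$ is a bounded operator $H^{k+1}(\hat T)\to H^{k+1}(\hat T)$, and it fixes $\mathbb P_k(\hat T)$ pointwise.

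For the interior estimate I would first apply the scaling estimate $\|\nabla_{\bm x}^m w\|_{L^2(T)}\le Ch_T^{-m+d/2}\|\hat w\|_{H^m(\hat T)}$ to $w:=v-\mathcal I_h^L v$, whose pullback is $\hat v-\hat{\mathcal I}\hat v$ by the commutation above. Since $m\le k+1$, I bound $\|\hat v-\hat{\mathcal I}\hat v\|_{H^m(\hat T)}\le\|(I-\hat{\mathcal I})\hat v\|_{H^{k+1}(\hat T)}$; because $I-\hat{\mathcal I}$ annihilates $\mathbb P_k(\hat T)$, the Bramble--Hilbert (Deny--Lions) lemma gives $\|(I-\hat{\mathcal I})\hat v\|_{H^{k+1}(\hat T)}\le C|\hat v|_{H^{k+1}(\hat T)}$. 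Finally the reverse scaling estimate $\|\nabla_{\hat{\bm x}}^{k+1}\hat v\|_{L^2(\hat T)}\le Ch_T^{k+1-d/2}\|v\|_{H^{k+1}(T)}$ converts the reference seminorm back. Multiplying the three powers of $h_T$ yields $h_T^{-m+d/2}\cdot h_T^{k+1-d/2}=h_T^{k+1-m}$, which is exactly \eref{eq: Lagrange interpolation error estimate in T}.

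The boundary estimate \eref{eq: Lagrange interpolation error estimate on S} follows the same three-step pattern, now parametrizing $S$ by $\bm F_S$ over $\hat S'$ and using the scaling bounds of \pref{prop: transformation between S and Shat}(ii) in place of the interior ones. The same commutation holds on faces: the restriction $\phi_{\bm p}|_S\circ\bm F_S$ reproduces the reference shape functions on $\hat S'$, so $(\mathcal I_h^L v)\circ\bm F_S=\hat{\mathcal I}_{\hat S'}(v\circ\bm F_S)$ with $\hat{\mathcal I}_{\hat S'}$ the Lagrange interpolation on the $(d-1)$-dimensional reference face. Applying Bramble--Hilbert on $\hat S'$ and scaling back with the exponents $-m+(d-1)/2$ and $(k+1)-(d-1)/2$ again produces the net power $h_S^{k+1-m}$.

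The only genuinely non-routine point is that $\bm F_T$ (and $\bm F_S$) is not affine, so the chain rule mixes derivative orders and a naive seminorm-to-seminorm transfer fails; this is precisely why I lean on the \emph{full}-norm scaling estimates proved earlier (which already absorb the lower-order chain-rule terms through the regularity of order $k$ in \ref{H6}) rather than on the classical affine scaling. Once the commutation identity and those scaling estimates are in hand, the Bramble--Hilbert step is standard and the remainder is a bookkeeping of powers of $h$.
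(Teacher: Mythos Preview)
Your proposal is correct and follows essentially the same route as the paper: pull back to the reference element via the commutation $(\mathcal I_h^L v)\circ\bm F_T=\hat{\mathcal I}\hat v$, apply Bramble--Hilbert there, and scale back using the full-norm transfer estimates of \pref{prop: transformation between S and Shat}(ii) (and its bulk analogue). The paper's proof is terser but identical in substance; your explicit remark that the non-affine $\bm F_T$ forces one to use the full $H^m$ norm on the reference element rather than a pure seminorm transfer is exactly the point encoded in those scaling propositions.
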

\begin{proof}
	By the Bramble--Hilbert theorem it holds that
	\begin{equation*}
		\|\nabla_{\hat{\bm x}'}^l [v \circ \bm F_S - (\mathcal I_h^L v) \circ \bm F_S)] \|_{L^2(\hat S')} \le C \|\nabla_{\hat{\bm x}'}^{k+1} (v \circ \bm F_S) \|_{L^2(\hat S')} \quad (l = 0, \dots, m),
	\end{equation*}
	where the constant $C$ depends only on $\hat S'$.
	This combined with \pref{prop: transformation between S and Shat}(ii) yields \eref{eq: Lagrange interpolation error estimate on S}.
	Estimate \eref{eq: Lagrange interpolation error estimate in T} is obtained similarly (or one can refer to \cite[Theorem 5]{CiaRav1972}).
\end{proof}
\begin{rem}
	(i) Adding up \eref{eq: Lagrange interpolation error estimate in T} for $T \in \mathcal T_h$ leads to the global estimate
	\begin{equation} \label{eq: global interpolation estimate in Omegah}
		\|v - \mathcal I_h^L v\|_{H^m(\Omega_h)} \le Ch^{k+1 - m} \|v\|_{H^{k+1}(\Omega_h)} \qquad \forall v \in H^{k+1}(\Omega_h) \quad (m = 0, 1).
	\end{equation}
	
	(ii) A corresponding global estimate on $\Gamma_h$ also holds; however, it is not useful for our purpose.
	To explain the reason, let us suppose $v \in H^m(\Omega; \Gamma)$ and extend it to some $\tilde v \in H^m(\mathbb R^d)$.
	Since we expect only $\tilde v|_{\Gamma_h} \in H^{m-1/2}(\Gamma_h)$ by the trace theorem, the direct interpolation $\mathcal I_h^L\tilde v$ may not have a good convergence property.
	To overcome this technical difficulty, we consider $\mathcal I_h^L (\tilde v \circ \bm\pi)$ instead in the theorem below, taking advantage of the fact that $v\circ\bm\pi$ is element-wisely as smooth on $\Gamma_h$ as $v$ is on $\Gamma$.
\end{rem}

\begin{thm} \label{thm: interpolation error estimate}
	Let $k + 1 > d/2$ and $m = 0,1$.
	For $v \in H^{k+1}(\Omega \cup \Gamma(\delta))$ satisfying $v|_\Gamma \in H^{k+1}(\Gamma)$ we have
	\begin{equation*}
		\| v - \mathcal I_h v\|_{H^{m}(\Omega_h; \Gamma_h)} \le Ch^{k+1-m} (\|v\|_{H^{k+1}(\Omega \cup \Gamma(\delta))} + \|v\|_{H^{k+1}(\Gamma)}).
	\end{equation*}
\end{thm}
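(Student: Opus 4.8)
The norm $\|\cdot\|_{H^m(\Omega_h;\Gamma_h)} = \|\cdot\|_{H^m(\Omega_h)} + \|\cdot\|_{H^m(\Gamma_h)}$ splits the estimate into a bulk part and a boundary part, which I would treat separately.

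\emph{Bulk part.} Since $\Omega_h \subset \Omega\cup\Gamma(\delta)$, we have $v\in H^{k+1}(\Omega_h)$, and $k+1>d/2$ makes $v$ continuous, so the Lagrange interpolant $\mathcal I_h^L v$ is available. Using that $\mathcal I_h$ reproduces $V_h$, I would write $v - \mathcal I_h v = (v - \mathcal I_h^L v) - \mathcal I_h(v - \mathcal I_h^L v)$. The first summand is controlled by \eref{eq: Lagrange interpolation error estimate in T}, and the second by the local stability \lref{lem1: stability of Ih} combined again with \eref{eq: Lagrange interpolation error estimate in T} on each macro element $\mathcal T_h(T)$; because $h_{T_1}\simeq h_T$ by regularity, both contributions are $O(h_T^{k+1-m})$. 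Summing squares over $T\in\mathcal T_h$ (the overlap number $\#\mathcal T_h(T)$ being uniformly bounded) yields $\|v-\mathcal I_h v\|_{H^m(\Omega_h)} \le Ch^{k+1-m}\|v\|_{H^{k+1}(\Omega_h)} \le Ch^{k+1-m}\|v\|_{H^{k+1}(\Omega\cup\Gamma(\delta))}$. This part is routine.

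\emph{Boundary part.} Here is the genuinely delicate point flagged in the introduction: the trace of $v$ on $\Gamma_h$ is in general only $H^{k+1/2}(\Gamma_h)$, so one cannot interpolate $v|_{\Gamma_h}$ directly at the optimal rate. The remedy is to compare against $w := v\circ\bm\pi$, which---via the element-wise diffeomorphism $\bm\pi$ and the equivalence \lref{lem: equivalence of Sobolev spaces on Gamma and Gammah}---is element-wise as smooth on $\Gamma_h$ as $v|_\Gamma$ is on $\Gamma$, namely $w\in H^{k+1}(S)$ with $\|w\|_{H^{k+1}(S)}\le C\|v\|_{H^{k+1}(\bm\pi(S))}$. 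Recalling that $(\mathcal I_h\,\cdot\,)|_{\Gamma_h}$ depends only on boundary data, I would use the decomposition (valid on $\Gamma_h$)
\begin{equation*}
  v - \mathcal I_h v = (v - w) + (w - \mathcal I_h w) - \mathcal I_h(v - w)
\end{equation*}
and estimate the three terms in $H^m(\Gamma_h)$. The first, $\|v - w\|_{H^m(\Gamma_h)} = \|v - v\circ\bm\pi\|_{H^m(\Gamma_h)}$, is exactly \cref{cor: u - u circ pi on Gammah}, giving $Ch^{k+1-m}\|v\|_{H^{k+1}(\Omega\cup\Gamma(\delta))}$. For the second I repeat the Scott--Zhang/Lagrange argument of the bulk part on the boundary: $w - \mathcal I_h w = (w - \mathcal I_h^L w) - \mathcal I_h(w - \mathcal I_h^L w)$, controlled by the boundary Lagrange estimate \eref{eq: Lagrange interpolation error estimate on S} and the boundary stability \eref{eq: local stability of Ihv on S}, then summed with the help of \eref{eq3: equivalence of surface integrals} to produce $Ch^{k+1-m}\|v\|_{H^{k+1}(\Gamma)}$ (here $w$ is continuous since $H^{k+1}(\Gamma)\hookrightarrow C(\Gamma)$ under $k+1>d/2$, so $\mathcal I_h^L w$ makes sense). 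For the third term I apply the boundary stability \eref{eq: local stability of Ihv on S} to $v-w$, which after summation over the uniformly finite macro elements gives $\|\mathcal I_h(v-w)\|_{H^m(\Gamma_h)} \le C\sum_{l=0}^1 h^{l-m}\|v-w\|_{H^l(\Gamma_h)}$; bounding $\|v-w\|_{H^l(\Gamma_h)}\le Ch^{k+1-l}\|v\|_{H^{k+1}(\Omega\cup\Gamma(\delta))}$ again by \cref{cor: u - u circ pi on Gammah} collapses each summand to $Ch^{k+1-m}$.

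Adding the three boundary contributions and the bulk contribution gives the asserted bound. \textbf{The main obstacle} is the boundary part: one must resist interpolating $v$ itself on $\Gamma_h$ (which loses half a derivative) and instead insert $w=v\circ\bm\pi$, the price being the two perturbation terms $v-w$ and $\mathcal I_h(v-w)$ that are absent in the transformation method. Keeping track of the powers of $\delta_S = Ch_S^{k+1}$ so that these perturbation terms attain the optimal order $h^{k+1-m}$---rather than only $h^{(k+1)/2}$ that a naive application of \eref{eq3: boundary-skin estimates} would suggest---is precisely where the refined boundary-skin estimates behind \cref{cor: u - u circ pi on Gammah} are essential.
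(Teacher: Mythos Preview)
Your proof is correct and follows essentially the same approach as the paper. The paper organizes the argument slightly more compactly by using the factorization $\mathcal I - \mathcal I_h = (\mathcal I - \mathcal I_h)(\mathcal I - \mathcal I_h^L)$ together with the approximation property of \cref{cor: H1 interpolation estimate} (so that on the boundary only two terms $J_1 = (\mathcal I-\mathcal I_h)(v - v\circ\bm\pi)$ and $J_2 = (\mathcal I-\mathcal I_h)(\mathcal I-\mathcal I_h^L)(v\circ\bm\pi)$ appear), whereas you expand into three pieces and invoke the stability \lref{lem1: stability of Ih} directly; these are equivalent routes to the same estimate.
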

\begin{proof}
	Let $\mathcal I$ denote the identity operator.
	Since $\mathcal I_h \mathcal I_h^L = \mathcal I_h^L$, one gets $\mathcal I - \mathcal I_h = (\mathcal I - \mathcal I_h)(\mathcal I - \mathcal I_h^L)$.
	Then it follows from \cref{cor: H1 interpolation estimate} and \eref{eq: global interpolation estimate in Omegah} that
	\begin{align*}
		\| v - \mathcal I_h  v\|_{H^m(\Omega_h)} &= \|(\mathcal I - \mathcal I_h)(v - \mathcal I_h^L v)\|_{H^m(\Omega_h)}
			\le Ch^{1-m} \| v - \mathcal I_h^L v\|_{H^1(\Omega_h)} \le Ch^{k+1-m} \|v\|_{H^{k+1}(\Omega_h)}.
	\end{align*}
	To consider the boundary estimate, observe that
	\begin{align*}
		 v - \mathcal I_h v = (\mathcal I - \mathcal I_h)(v - v \circ \bm\pi) + (\mathcal I - \mathcal I_h)(\mathcal I - \mathcal I_h^L) (v \circ \bm\pi)
			=: J_1 + J_2.
	\end{align*}
	By Corollaries \ref{cor: H1 interpolation estimate} and \ref{cor: u - u circ pi on Gammah},
	\begin{equation*}
		\|J_1\|_{H^m(\Gamma_h)} \le Ch^{1-m} \|v - v\circ\bm\pi\|_{H^1(\Gamma_h)} \le Ch^{k+1-m} \|v\|_{H^{\min\{k+1, 3\}}(\Omega \cup \Gamma(\delta))}.
	\end{equation*}
	From \cref{cor: H1 interpolation estimate}, \eref{eq: Lagrange interpolation error estimate on S}, and \eref{eq3: equivalence of surface integrals} we obtain
	\begin{align*}
		\|J_2\|_{H^m(\Gamma_h)} &\le Ch^{1-m} \|v\circ\bm\pi - \mathcal I_h^L(v\circ\bm\pi)\|_{H^1(\Gamma_h)} \le Ch^{k+1-m} \Big( \sum_{S \in \mathcal S_h} \|v\circ\bm\pi\|_{H^{k+1}(S)}^2 \Big)^{1/2} \\
			&\le Ch^{k+1-m} \Big( \sum_{S \in \mathcal S_h} \|v\|_{H^{k+1}(\bm\pi(S))}^2 \Big)^{1/2} = Ch^{k+1-m} \|v\|_{H^{k+1}(\Gamma)},
	\end{align*}
	where we have used \lref{lem: equivalence of Sobolev spaces on Gamma and Gammah}.
	Combining the estimates above proves the theorem.
\end{proof}

% 4
\section{Error estimates in an approximate domain} \label{sec: error estimate}
We continue to denote by $k \ge 1$ the order of the isoparametric finite element approximation throughout this and next sections.
% 4.1
\subsection{Finite element scheme based on extensions}
We recall that the weak formulation for \eref{eq1: g-robin}--\eref{eq2: g-robin} is given by \eref{eq: continuous problem}.
In order to define its finite element approximation, one needs counterparts to $f$ and $\tau$ given in $\Omega_h$ and $\Gamma_h$ respectively.
For this we will exploit extensions that preserves the smoothness as mentioned in Introduction.
Namely, if $f \in H^{k-1}(\Omega)$, one can choose some $\tilde f \in H^{k-1}(\mathbb R^d)$ such that $\|\tilde f\|_{H^{k-1}(\mathbb R^d)} \le C \|f\|_{H^{k-1}(\Omega)}$.
For $\tau$, we assume $\tau \in H^{k-1/2}(\Gamma)$ so that it admits an extension $\tilde\tau \in H^k(\mathbb R^d)$ such that $\|\tilde\tau\|_{H^{k}(\mathbb R^d)} \le C \|\tau\|_{H^{k-1/2}(\Gamma)}$
(the extension operator $\tilde\cdot$ has different meanings for $f$ and $\tau$, but there should be no fear of confusion).

The resulting discrete problem is to find $u_h \in V_h$ such that
\begin{equation} \label{eq: FE scheme}
	a_h(u_h, v_h) := (\nabla u_h, \nabla v_h)_{\Omega_h} + (u_h, v_h)_{\Gamma_h} + (\nabla_{\Gamma_h}u_h, \nabla_{\Gamma_h}v_h)_{\Gamma_h} = (\tilde f, v_h)_{\Omega_h} + (\tilde\tau, v_h)_{\Gamma_h} \qquad \forall v_h \in V_h.
\end{equation}
Because the bilinear form $a_h$ is uniformly coercive in $V_h$, i.e., $a_h(v_h, v_h) \ge C \|v_h\|_{H^1(\Omega_h; \Gamma_h)}^2$ for all $v_h \in V_h$ with $C$ independent of $h$, the existence and uniqueness of a solution $u_h$ is an immediate consequence of the Lax--Milgram theorem.

% 4.2
\subsection{$H^1$-error estimate}
We define the residual functionals for $v \in H^1(\Omega_h; \Gamma_h)$ by
\begin{align}
	R_u^1(v) &:= (-\Delta\tilde u - \tilde f, v)_{\Omega_h \setminus \Omega} + (\partial_{n_h}\tilde u - (\partial_n u)\circ\bm\pi, v)_{\Gamma_h} + (\tilde u - u\circ\bm\pi, v)_{\Gamma_h} + (\tau\circ\bm\pi - \tilde\tau, v)_{\Gamma_h}, \label{eq: R1u(v)} \\
	R_u^2(v) &:=  \big[ ( (\Delta_\Gamma u)\circ\bm\pi, v )_{\Gamma_h} + (\nabla_{\Gamma_h}(u\circ\bm\pi), \nabla_{\Gamma_h}v)_{\Gamma_h} \big]+ ( \nabla_{\Gamma_h}(\tilde u - u\circ\bm\pi), \nabla_{\Gamma_h} v_h )_{\Gamma_h}, \notag \\
	R_u(v) &:= R_u^1(v) + R_u^2(v), \notag
\end{align}
which completely vanish if we formally assume $\Omega_h = \Omega$.
Therefore, the residual terms above is considered to represent domain perturbation.
Let us state consistency error estimates, or, in other words, Galerkin orthogonality relation with domain perturbation terms.
\begin{prop}
	Assume that $f \in H^{k-1}(\Omega)$, $\tau \in H^{k-1/2}(\Gamma)$ if $k = 1, 2$, and that $f \in H^1(\Omega)$, $\tau \in H^{3/2}(\Gamma)$ if $k \ge 3$.
	Let $u$ and $u_h$ be the solutions of \eref{eq: continuous problem} and \eref{eq: FE scheme} respectively.
	Then we have
	\begin{equation} \label{eq: asymptotic Galerkin orthogonality}
		a_h(\tilde u - u_h, v_h) = R_u(v_h) \qquad \forall v_h \in V_h.
	\end{equation}
	Moreover, the following estimate holds:
	\begin{equation} \label{eq: 1st estimate of Ru(v)}
		|R_u(v)| \le C h^k (\|f\|_{H^{\min\{k-1, 1\}}(\Omega)} + \|\tau\|_{H^{\min\{k-1/2, 3/2\}}(\Gamma)}) \|v\|_{H^1(\Omega_h; \Gamma_h)} \qquad \forall v \in H^1(\Omega_h; \Gamma_h).
	\end{equation}
\end{prop}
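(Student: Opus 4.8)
The plan is to establish the identity \eref{eq: asymptotic Galerkin orthogonality} first, and then bound $R_u$ termwise. For the orthogonality, I would start from $a_h(\tilde u, v_h)$ and integrate the bulk term by parts over the Lipschitz domain $\Omega_h$, which is legitimate since $\tilde u \in H^{\min\{k+1,3\}}(\mathbb R^d) \subset H^2$:
\begin{equation*}
	(\nabla\tilde u, \nabla v_h)_{\Omega_h} = -(\Delta\tilde u, v_h)_{\Omega_h} + (\partial_{n_h}\tilde u, v_h)_{\Gamma_h}.
\end{equation*}
Splitting $\Omega_h = (\Omega_h\cap\Omega)\cup(\Omega_h\setminus\Omega)$ and using $-\Delta\tilde u = \tilde f$ on $\Omega_h\cap\Omega\subset\Omega$, the bulk contribution collapses to the single residual $(-\Delta\tilde u - \tilde f, v_h)_{\Omega_h\setminus\Omega}$, matching the first term of $R_u^1$. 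It then remains to rearrange the boundary contributions. The key device is to pull the exact boundary condition $\partial_n u + u - \Delta_\Gamma u = \tau$ back to $\Gamma_h$ via $\bm\pi$: after adding and subtracting the composed quantities $(\partial_n u)\circ\bm\pi$, $u\circ\bm\pi$, $\tau\circ\bm\pi$, and $(\Delta_\Gamma u)\circ\bm\pi$, these four combine into $((-\partial_n u - u + \Delta_\Gamma u + \tau)\circ\bm\pi, v_h)_{\Gamma_h}$, which vanishes identically by the boundary condition. What survives is precisely the collection of residuals defining $R_u^1$ and $R_u^2$ (the two surface-gradient terms recombining into $(\nabla_{\Gamma_h}\tilde u, \nabla_{\Gamma_h}v_h)_{\Gamma_h}$), so that $a_h(\tilde u, v_h) - (\tilde f, v_h)_{\Omega_h} - (\tilde\tau, v_h)_{\Gamma_h} = R_u(v_h)$; invoking the scheme \eref{eq: FE scheme} gives \eref{eq: asymptotic Galerkin orthogonality}.

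For the estimate \eref{eq: 1st estimate of Ru(v)} I would treat the residual terms separately, distinguishing $k=1$ from $k\ge2$ since the available regularity of $u$ (either $H^2$ or $H^3$, by the regularity structure recalled in the introduction) differs. The ``benign'' surface terms are handled by the machinery already in place: the pair in $R_u^2$ built from $(\Delta_\Gamma u)\circ\bm\pi$ and $\nabla_{\Gamma_h}(u\circ\bm\pi)$ is bounded by \lref{lem: error from integration by parts on Gammah}, giving $O(\delta)=O(h^{k+1})$; the term $(\nabla_{\Gamma_h}(\tilde u - u\circ\bm\pi), \nabla_{\Gamma_h}v)_{\Gamma_h}$, rewritten via $u\circ\bm\pi = \tilde u\circ\bm\pi$ on $\Gamma_h$, is controlled by \cref{cor: u - u circ pi on Gammah} with $m=1$, producing the critical factor $O(h^k)$; and $(\tilde u - u\circ\bm\pi, v)_{\Gamma_h} = (\tilde u - \tilde u\circ\bm\pi, v)_{\Gamma_h}$ is $O(h^{k+1})$ by the same corollary with $m=0$.

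The two genuinely delicate contributions are the bulk residual over $\Omega_h\setminus\Omega$ and the normal-derivative jump on $\Gamma_h$. For the former I would write $|(-\Delta\tilde u - \tilde f, v)_{\Omega_h\setminus\Omega}| \le \|-\Delta\tilde u - \tilde f\|_{L^2(\Omega_h\setminus\Omega)}\|v\|_{L^2(\Omega_h\setminus\Omega)}$ and apply \eref{eq: RHS with Omegah minus Omega} to the $v$-factor, extracting $\delta^{1/2}\|v\|_{H^1(\Omega_h;\Gamma_h)}$. For $k=1$ the residual factor is merely $O(1)$ in $L^2$, so the single $\delta^{1/2}=O(h)$ suffices; for $k\ge2$ one exploits $f\in H^1$ (hence $-\Delta\tilde u - \tilde f\in H^1$) together with \eref{eq2: boundary-skin estimates} to harvest a second $\delta^{1/2}$, yielding $O(\delta)=O(h^{k+1})$. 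For the normal-derivative term I would split
\begin{equation*}
	\partial_{n_h}\tilde u - (\partial_n u)\circ\bm\pi = (\bm n_h - \bm n\circ\bm\pi)\cdot\nabla\tilde u + (\bm n\circ\bm\pi)\cdot(\nabla\tilde u - (\nabla\tilde u)\circ\bm\pi),
\end{equation*}
estimating the first summand by \eref{eq: n - nh} (the factor $h^k$, with a trace inequality on $\Gamma_h$ to control $\|\nabla\tilde u\|_{L^2(\Gamma_h)}$ by $\|u\|_{H^2}$) and the second by the boundary-skin estimate \eref{eq3: boundary-skin estimates} applied to $w=\nabla\tilde u$, chained with \eref{eq2: boundary-skin estimates} to reach $O(h^k)$ for $k=1$ and $O(h^{k+1})$ for $k\ge2$. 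The remaining term $(\tau\circ\bm\pi - \tilde\tau, v)_{\Gamma_h} = -(\tilde\tau - \tilde\tau\circ\bm\pi, v)_{\Gamma_h}$ is handled analogously, using only $\tilde\tau\in H^1$ for $k=1$ and $\tilde\tau\in H^2$ for $k\ge2$.

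I expect the main obstacle to be the bulk residual over $\Omega_h\setminus\Omega$: unlike the surface terms, $-\Delta\tilde u - \tilde f$ does not vanish there and is not pointwise small, so its control rests entirely on the thinness of the skin layer and on the case-dependent regularity. Getting the bookkeeping of powers of $\delta=O(h^{k+1})$ right—deciding when a single $\delta^{1/2}$ is enough ($k=1$) versus when a second one must be extracted through the boundary-skin estimates ($k\ge2$)—is exactly what forces the hypothesis split and the $\min\{\cdot\}$ regularity indices appearing in the statement, and is the step I would handle most carefully.
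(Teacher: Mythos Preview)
Your proposal is correct and follows essentially the same route as the paper: the identity is obtained by integrating $(\nabla\tilde u,\nabla v_h)_{\Omega_h}$ by parts, using $-\Delta u=f$ on $\Omega_h\cap\Omega$, and inserting the composed quantities $(\partial_n u)\circ\bm\pi$, $u\circ\bm\pi$, $(\Delta_\Gamma u)\circ\bm\pi$, $\tau\circ\bm\pi$ so that the exact boundary condition annihilates their sum; the estimate is then obtained termwise with the same tools you name (\lref{lem: error from integration by parts on Gammah} and \cref{cor: u - u circ pi on Gammah} for $R_u^2$, the normal splitting via \eref{eq: n - nh} and \eref{eq3: boundary-skin estimates} for the $\partial_{n_h}$-term, and the $k=1$ versus $k\ge2$ dichotomy on the bulk residual via the boundary-skin estimates). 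The only cosmetic difference is that the paper cites \eref{eq: RHS with Omegah minus Omega} rather than \eref{eq2: boundary-skin estimates} when extracting the $\delta^{1/2}$ factors from the bulk residual, and bounds $(\tilde u-u\circ\bm\pi,v)_{\Gamma_h}$ directly by \eref{eq3: boundary-skin estimates} rather than via \cref{cor: u - u circ pi on Gammah}; both choices lead to the same $O(h^k)$ outcome.
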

\begin{proof}
	Equation \eref{eq: asymptotic Galerkin orthogonality} results from a direct computation as follows:
	\begin{align*}
		a_h(\tilde u - u_h, v_h) &= (\nabla(\tilde u - u_h), \nabla v_h)_{\Omega_h} + (\tilde u - u_h, v_h)_{\Gamma_h} + (\nabla_{\Gamma_h}(\tilde u - u_h), \nabla_{\Gamma_h} v_h)_{\Gamma_h} \\
			&= (-\Delta\tilde u , v_h)_{\Omega_h} + (\partial_{n_h}\tilde u + \tilde u, v_h)_{\Gamma_h} + (\nabla_{\Gamma_h}\tilde u, \nabla_{\Gamma_h} v_h)_{\Gamma_h} - (\tilde f, v_h)_{\Omega_h} - (\tilde\tau, v_h)_{\Gamma_h} \\
			&= (-\Delta\tilde u - \tilde f, v_h)_{\Omega_h \setminus \Omega} + (\partial_{n_h}\tilde u - (\partial_n u)\circ\bm\pi, v_h)_{\Gamma_h} + (\tilde u - u\circ\bm\pi, v_h)_{\Gamma_h} + (\tau\circ\bm\pi - \tilde\tau, v_h)_{\Gamma_h} \\
			&\hspace{1cm} + ((\Delta_\Gamma u)\circ\bm\pi, v_h)_{\Gamma_h} + (\nabla_{\Gamma_h} (u\circ\bm\pi), \nabla_{\Gamma_h} v_h)_{\Gamma_h} + (\nabla_{\Gamma_h}(\tilde u - u\circ\bm\pi), \nabla_{\Gamma_h} v_h)_{\Gamma_h} \\
			&= R_u^1(v_h) + R_u^2(v_h) = R_u(v_h).
	\end{align*}
	
	Let $C_{f,\tau}$ denote a generic constant multiplied by $\|f\|_{H^{\min\{k-1, 1\}}(\Omega)} + \|\tau\|_{H^{\min\{k-1/2, 3/2\}}(\Gamma)}$.
	We will make use of the regularity structure $\|u\|_{H^{k+1}(\Omega; \Gamma)} \le C (\|f\|_{H^{k-1}(\Omega)} + \|\tau\|_{H^{k-1}(\Gamma)})$ and the stability of extensions without further emphasis.
	Applying the boundary-skin estimate \eref{eq: RHS with Omegah minus Omega}, we obtain
	\begin{align*}
		|(-\Delta\tilde u - \tilde f, v)_{\Omega_h \setminus \Omega}| &\le
		\begin{cases}
			C (\|\Delta \tilde u\|_{L^2(\Omega_h)} + \|\tilde f\|_{L^2(\Omega_h)}) \cdot C\delta^{1/2} \|v\|_{H^1(\Omega_h)} & \quad (k = 1) \\
			C \delta^{1/2} (\|\tilde u\|_{H^3(\Omega_h)} + \|\tilde f\|_{H^1(\Omega_h)}) \cdot C \delta^{1/2} \|v\|_{H^1(\Omega_h)} & \quad (k \ge 2)
		\end{cases} \\
		&\le C_{f, \tau} h^k \|v\|_{H^1(\Omega_h)},
	\end{align*}
	where we have used $\delta = Ch^{k+1}$ and $h \le 1$.
	The second term of $R^1_u(v)$ is estimated as
	\begin{align*}
		|(\partial_{n_h}\tilde u - (\partial_n u)\circ\bm\pi, v)_{\Gamma_h}| &= \big|\big( \nabla\tilde u \cdot (\bm n_h - \bm n\circ\bm\pi), v \big)_{\Gamma_h} + \big((\nabla\tilde u - (\nabla u)\circ\bm\pi) \cdot \bm n\circ\bm\pi, v \big)_{\Gamma_h}\big| \\
			&\le C (h^k \|\nabla\tilde u\|_{L^2(\Gamma_h)} + \delta^{1/2} \|\nabla^2\tilde u\|_{L^2(\Gamma(\delta))}) \|v\|_{L^2(\Gamma_h)} \\
			&\le \begin{cases}
				C (h^k \|\tilde u\|_{H^2(\Omega_h)} + \delta^{1/2} \|\tilde u\|_{H^2(\Gamma(\delta))}) \|v\|_{H^1(\Omega_h)} & \quad (k = 1) \\
				C (h^k \|\tilde u\|_{H^2(\Omega_h)} + \delta \|\tilde u\|_{H^3(\Omega \cup \Gamma(\delta))}) \|v\|_{H^1(\Omega_h)} & \quad (k \ge 2)
			\end{cases} \\
			&\le C_{f, \tau} h^k \|v\|_{H^1(\Omega_h)},
	\end{align*}
	as a result of \eref{eq: n - nh}, \eref{eq3: boundary-skin estimates}, and \eref{eq2: boundary-skin estimates}.
	Similarly, the third term of $R^1_u(v)$ is bounded by
	\begin{align*}
		C \delta^{1/2} \|\nabla\tilde u\|_{L^2(\Gamma(\delta))} \|v_h\|_{L^2(\Gamma_h)} \le C_{f, \tau} h^k \|v_h\|_{H^1(\Omega_h)}.
	\end{align*}
	For the fourth term of $R^1_u(v)$, we need the regularity assumption $\tau \in H^{1/2}(\Gamma)$ for $k = 1$ and $\tau \in H^{3/2}(\Gamma)$ for $k \ge 2$ to ensure $\tilde\tau \in H^1(\mathbb R^d)$ and $\tilde\tau \in H^2(\mathbb R^d)$, respectively.
	Then $|(\tau\circ\bm\pi - \tilde\tau, v_h)_{\Gamma_h}|$ is bounded by
	\begin{align*}
		C \delta^{1/2} \|\nabla\tilde\tau\|_{L^2(\Gamma(\delta))} \|v_h\|_{L^2(\Gamma_h)} &\le
		\begin{cases}
			C \delta^{1/2} \|\nabla\tilde\tau\|_{L^2(\Gamma(\delta))} \|v_h\|_{H^1(\Omega_h)} \quad &(k = 1) \\
			C \delta \|\tilde\tau\|_{H^2(\Omega \cup \Gamma(\delta))} \|v_h\|_{H^1(\Omega_h)} &(k \ge 2)
		\end{cases} \\
		&\le C_{f, \tau} h^k \|v_h\|_{H^1(\Omega_h)}.
	\end{align*}
	
	For $R_u^2(v)$, we apply \lref{lem: error from integration by parts on Gammah} and \cref{cor: u - u circ pi on Gammah} to obtain
	\begin{align*}
		\big| ( (\Delta_\Gamma u)\circ\bm\pi, v )_{\Gamma_h} + (\nabla_{\Gamma_h}(u\circ\bm\pi), \nabla_{\Gamma_h}v)_{\Gamma_h} \big|
			&\le C\delta (\|u\|_{H^2(\Gamma)} \|v\|_{L^2(\Gamma_h)} + \|\nabla_\Gamma u\|_{L^2(\Gamma)} \|\nabla_{\Gamma_h} v\|_{L^2(\Gamma_h)}) \\
			&\le C_{f, \tau} h^k \|v_h\|_{H^1(\Gamma_h)}, \\
		\big| (\nabla_{\Gamma_h} (\tilde u - u\circ\bm\pi), \nabla_{\Gamma_h} v)_{\Gamma_h} \big| &\le Ch^k \|\tilde u\|_{H^{\min\{k+1, 3\}}(\Omega \cup \Gamma(\delta))} \|\nabla_{\Gamma_h} v\|_{L^2(\Gamma_h)} \le C_{f, \tau} h^k \|v_h\|_{H^1(\Gamma_h)}.
	\end{align*}
	Combining the estimates above all together concludes \eref{eq: 1st estimate of Ru(v)}.
\end{proof}
\begin{rem} \label{rem: in case of transformation less regularity for tau is OK}
	If the transformation $\tau\circ\bm\pi$ instead of the extension $\tilde\tau$ is employed in the FE scheme \eref{eq: FE scheme}, then assuming just $\tau \in H^{k-1}(\Gamma)$ is sufficient to get
	\begin{equation*}
		|R_u(v)| \le C h^k (\|f\|_{H^{\min\{k-1, 1\}}(\Omega)} + \|\tau\|_{H^{\min\{k-1, 1\}}(\Gamma)}) \|v\|_{H^1(\Omega_h; \Gamma_h)},
	\end{equation*}
	because the term involving $\tau$ in \eref{eq: R1u(v)} disappears.
\end{rem}

We are ready to state the $H^1$-error estimates.
\begin{thm} \label{thm: H1 error estimate}
	Let $k + 1 > d/2$.
	Assume that $f \in L^2(\Omega)$, $\tau \in H^{1/2}(\Gamma)$ for $k = 1$, that $f \in H^{1}(\Omega)$, $\tau \in H^{3/2}(\Gamma)$ for $k = 2$, and that $f \in H^{k-1}(\Omega)$, $\tau \in H^{k-1}(\Gamma)$ for $k \ge 3$.
	Then we have
	\begin{equation*}
		\|\tilde u - u_h\|_{H^1(\Omega_h; \Gamma_h)} \le C h^k (\|f\|_{H^{k-1}(\Omega)} + \|\tau\|_{H^{\max\{k-1, \min\{k-1/2,3/2\}\}}(\Gamma)}),
	\end{equation*}
	where $u$ and $u_h$ are the solutions of \eref{eq: continuous problem} and \eref{eq: FE scheme} respectively.
\end{thm}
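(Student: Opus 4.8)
The plan is to run a standard C\'ea/Strang-type argument that absorbs the variational crime already isolated in the residual functional $R_u$. First I would split the error through the Scott--Zhang interpolant,
\[
	\|\tilde u - u_h\|_{H^1(\Omega_h; \Gamma_h)} \le \|\tilde u - \mathcal I_h \tilde u\|_{H^1(\Omega_h; \Gamma_h)} + \|e_h\|_{H^1(\Omega_h; \Gamma_h)}, \qquad e_h := \mathcal I_h \tilde u - u_h \in V_h,
\]
and treat the two pieces separately. The interpolation term is handled directly by \tref{thm: interpolation error estimate} with $m = 1$, after checking that $\tilde u$ satisfies its hypotheses: the extension obeys $\tilde u \in H^{k+1}(\Omega\cup\Gamma(\delta))$ with trace $\tilde u|_\Gamma = u|_\Gamma \in H^{k+1}(\Gamma)$, whence
\[
	\|\tilde u - \mathcal I_h \tilde u\|_{H^1(\Omega_h; \Gamma_h)} \le Ch^k \big( \|\tilde u\|_{H^{k+1}(\Omega\cup\Gamma(\delta))} + \|u\|_{H^{k+1}(\Gamma)} \big) \le Ch^k \|u\|_{H^{k+1}(\Omega; \Gamma)}.
\]

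For the finite element term, since $e_h \in V_h$ I would invoke the uniform coercivity of $a_h$ and then split by the asymptotic Galerkin orthogonality \eref{eq: asymptotic Galerkin orthogonality}:
\[
	C \|e_h\|_{H^1(\Omega_h; \Gamma_h)}^2 \le a_h(e_h, e_h) = a_h(\mathcal I_h \tilde u - \tilde u, e_h) + a_h(\tilde u - u_h, e_h) = a_h(\mathcal I_h \tilde u - \tilde u, e_h) + R_u(e_h).
\]
The first summand is controlled by the elementary continuity of $a_h$ on $H^1(\Omega_h; \Gamma_h)$ (Cauchy--Schwarz term by term) combined with the interpolation bound above, and the second is exactly \eref{eq: 1st estimate of Ru(v)}. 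Dividing by $\|e_h\|_{H^1(\Omega_h; \Gamma_h)}$ gives
\[
	\|e_h\|_{H^1(\Omega_h; \Gamma_h)} \le C\big( \|\tilde u - \mathcal I_h \tilde u\|_{H^1(\Omega_h; \Gamma_h)} + h^k \big( \|f\|_{H^{\min\{k-1,1\}}(\Omega)} + \|\tau\|_{H^{\min\{k-1/2,3/2\}}(\Gamma)} \big) \big).
\]

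Finally I would recombine the two contributions and use the regularity structure $\|u\|_{H^{k+1}(\Omega;\Gamma)} \le C(\|f\|_{H^{k-1}(\Omega)} + \|\tau\|_{H^{k-1}(\Gamma)})$ to convert the interpolation bound into data norms. The only bookkeeping is to merge the Sobolev indices: the interpolation part yields $\|f\|_{H^{k-1}(\Omega)} + \|\tau\|_{H^{k-1}(\Gamma)}$ while the residual part yields $\|f\|_{H^{\min\{k-1,1\}}(\Omega)} + \|\tau\|_{H^{\min\{k-1/2,3/2\}}(\Gamma)}$; since the higher index dominates on $\Omega$ and on the compact surface $\Gamma$, the $f$-terms collapse to $\|f\|_{H^{k-1}(\Omega)}$ and the $\tau$-terms to $\|\tau\|_{H^{\max\{k-1,\,\min\{k-1/2,3/2\}\}}(\Gamma)}$, which is precisely the claimed right-hand side. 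I do not anticipate a genuine obstacle here: the substantive work---coercivity, the Galerkin orthogonality carrying the perturbation terms, the residual bound built from the boundary-skin estimates, and above all the regularity-loss-aware interpolation estimate using $v\circ\bm\pi$ rather than $\tilde v$ directly---is already done in the preceding results, so the theorem is essentially their assembly. The one point demanding care is verifying, case by case for $k=1$, $k=2$, and $k\ge 3$, that the stated hypotheses on $(f,\tau)$ simultaneously secure $u \in H^{k+1}(\Omega;\Gamma)$ for the interpolation step and the regularity required by \eref{eq: 1st estimate of Ru(v)} for the residual step.
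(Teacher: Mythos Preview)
Your proposal is correct and follows essentially the same approach as the paper. The only cosmetic difference is that the paper applies coercivity directly to $\tilde u - u_h$ and splits the second argument of $a_h(\tilde u - u_h,\cdot)$ as $(\tilde u - \mathcal I_h\tilde u) + (\mathcal I_h\tilde u - u_h)$, whereas you first peel off the interpolation error by the triangle inequality and then apply coercivity to $e_h = \mathcal I_h\tilde u - u_h$; the ingredients invoked (\tref{thm: interpolation error estimate}, \eref{eq: asymptotic Galerkin orthogonality}, \eref{eq: 1st estimate of Ru(v)}, continuity and coercivity of $a_h$, and the regularity structure) and the resulting bookkeeping of Sobolev indices are identical.
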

\begin{proof}
	To save the space we introduce the notation $C_{f, \tau} := C (\|f\|_{H^{k-1}(\Omega)} + \|\tau\|_{H^{\max\{k-1, \min\{k-1/2,3/2\}\}}(\Gamma)})$.
	It follows from the uniform coercivity of $a_h$ and \eref{eq: asymptotic Galerkin orthogonality} that
	\begin{align*}
		C\|\tilde u - u_h\|_{H^1(\Omega_h; \Gamma_h)}^2 \le a_h(\tilde u - u_h, \tilde u - u_h) = a_h(\tilde u - u_h, \tilde u - \mathcal I_h \tilde u) + R_u(\mathcal I_h \tilde u - u_h).
	\end{align*}
	In view of \tref{thm: interpolation error estimate}, the first term in the right-hand side is bounded by
	\begin{align*}
		\|\tilde u - u_h\|_{H^1(\Omega_h; \Gamma_h)} \|\tilde u - \mathcal I_h \tilde u\|_{H^1(\Omega_h; \Gamma_h)} &\le Ch^k (\|\tilde u\|_{H^{k+1}(\Omega \cup \Gamma(\delta))} + \|u\|_{H^{k+1}(\Gamma)}) \|\tilde u - u_h\|_{H^1(\Omega_h; \Gamma_h)} \\
			&\le C_{f, \tau} h^k \|\tilde u - u_h\|_{H^1(\Omega_h; \Gamma_h)}
	\end{align*}
	as a result of the regularity of $u$ and the stability of extensions.
	Estimate \eref{eq: 1st estimate of Ru(v)} applied to $R_u(\mathcal I_h \tilde u - u_h)$ combined again with \tref{thm: interpolation error estimate} gives the upper bound of the second term as
	\begin{align*}
		C_{f, \tau} h^k \|\mathcal I_h \tilde u - u_h\|_{H^1(\Omega_h; \Gamma_h)} \le C_{f, \tau} h^k \|\tilde u - u_h\|_{H^1(\Omega_h; \Gamma_h)} + (C_{f, \tau}h^{k})^2.
	\end{align*}
	Consequently,
	\begin{equation*}
		C\|\tilde u - u_h\|_{H^1(\Omega_h; \Gamma_h)}^2 \le C_{f, \tau} h^k \|\tilde u - u_h\|_{H^1(\Omega_h; \Gamma_h)} + (C_{f, \tau} h^k)^2,
	\end{equation*}
	which after an absorbing argument proves the theorem.
\end{proof}

% 4.3
\subsection{$L^2$-error estimate}
Let $\varphi\in L^2(\Omega_h), \psi \in L^2(\Gamma_h)$ be arbitrary such that $\|\varphi\|_{L^2(\Omega_h)} = \|\psi\|_{L^2(\Gamma_h)} = 1$.
We define $w \in H^2(\Omega; \Gamma)$ as the solution of the dual problem introduced as follows:
\begin{equation} \label{eq: dual problem}
	-\Delta w = \varphi \quad\text{in }\;\Omega, \qquad \textstyle\frac{\partial w}{\partial n} + w - \Delta_{\Gamma} w = \psi\circ\bm\pi^* \quad\text{on }\; \Gamma,
\end{equation}
where $\varphi$ is extended to $\mathbb R^d \setminus \Omega_h$ by 0.
For $v \in H^1(\Omega_h; \Gamma_h)$ we define residual functionals w.r.t.\ $w$ by
\begin{align*}
	R_w^1(v) &:= (v, -\Delta\tilde w - \varphi)_{\Omega_h\setminus\Omega} + (v, \partial_{n_h}\tilde w - (\partial_n w) \circ \pi)_{\Gamma_h} + (v, \tilde w - w\circ\bm\pi)_{\Gamma_h}, \\
	R_w^2(v) &:= \big[ (v, (\Delta_\Gamma w)\circ\bm\pi)_{\Gamma_h} + ( \nabla_{\Gamma_h} v, \nabla_{\Gamma_h}(w\circ\bm\pi))_{\Gamma_h} \big] + ( \nabla_{\Gamma_h} v, \nabla_{\Gamma_h}(\tilde w - w\circ\bm\pi) )_{\Gamma_h} \\
	R_w(v) &:= R_w^1(v) + R_w^2(v).
\end{align*}

\begin{lem}
	Let $k \ge 1$, $v \in H^1(\Omega_h; \Gamma_h)$, and $w$ be as above.
	Then we have
	\begin{equation} \label{eq: L2 duality representation for phi and psi}
		(v, \varphi)_{\Omega_h} + (v, \psi)_{\Gamma_h} = a_h(v, \tilde w) - R_w(v).
	\end{equation}
	Moreover, the following estimate holds:
	\begin{equation} \label{eq: estimate of Rw(v)}
		|R_w(v)| \le Ch \|w\|_{H^2(\Omega; \Gamma)} \|v\|_{H^1(\Omega_h; \Gamma_h)}.
	\end{equation}
\end{lem}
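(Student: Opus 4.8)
The plan is to treat \eref{eq: L2 duality representation for phi and psi} as the exact dual counterpart of the Galerkin relation \eref{eq: asymptotic Galerkin orthogonality} and \eref{eq: estimate of Rw(v)} as the analogue of \eref{eq: 1st estimate of Ru(v)}, the essential new feature being that the test function $\tilde w$ now carries only $H^2$-regularity. For the identity I would expand $a_h(v,\tilde w)$ and integrate by parts in $\Omega_h$, writing $(\nabla v,\nabla\tilde w)_{\Omega_h}=-(v,\Delta\tilde w)_{\Omega_h}+(v,\partial_{n_h}\tilde w)_{\Gamma_h}$, and then subtract $R_w(v)=R_w^1(v)+R_w^2(v)$ term by term. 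The two surface-gradient contributions inside $R_w^2$ telescope to $(\nabla_{\Gamma_h}v,\nabla_{\Gamma_h}\tilde w)_{\Gamma_h}$ and cancel the matching term of $a_h$, while the $\partial_{n_h}\tilde w$ and $\tilde w$ boundary terms combine with their residual counterparts and with the bracketed $(\Delta_\Gamma w)\circ\bm\pi$ term to leave $(v,[\partial_n w+w-\Delta_\Gamma w]\circ\bm\pi)_{\Gamma_h}$. Invoking the dual boundary condition $\partial_n w+w-\Delta_\Gamma w=\psi\circ\bm\pi^*$ of \eref{eq: dual problem} together with $\bm\pi^*\circ\bm\pi=\mathrm{id}$ on $\Gamma_h$ reduces this to $(v,\psi)_{\Gamma_h}$. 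For the bulk, combining $-(v,\Delta\tilde w)_{\Omega_h}$ with $-(v,-\Delta\tilde w-\varphi)_{\Omega_h\setminus\Omega}$ restricts the $\Delta\tilde w$-integral to $\Omega_h\cap\Omega$, where $-\Delta\tilde w=-\Delta w=\varphi$; since $\varphi$ was extended by $0$ this returns precisely $(v,\varphi)_{\Omega_h}$. This step is pure bookkeeping and should pose no difficulty.

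For the estimate \eref{eq: estimate of Rw(v)} I would bound each residual term separately, aiming for a single power of $h$ everywhere (note $h^k\le h$ and $\delta^{1/2}=Ch^{(k+1)/2}\le Ch$ since $h\le1$ and $k\ge1$). The bulk term $(v,-\Delta\tilde w-\varphi)_{\Omega_h\setminus\Omega}$ is controlled through \eref{eq: RHS with Omegah minus Omega}, which gives $\|v\|_{L^2(\Omega_h\setminus\Omega)}\le C\delta^{1/2}\|v\|_{H^1(\Omega_h)}$, multiplied by $\|\Delta\tilde w\|_{L^2}+\|\varphi\|_{L^2}$. The normal-derivative term I would split as $\nabla\tilde w\cdot(\bm n_h-\bm n\circ\bm\pi)+(\nabla\tilde w-(\nabla\tilde w)\circ\bm\pi)\cdot\bm n\circ\bm\pi$, the first part being $O(h^k)$ by the normal error \eref{eq: n - nh} and the second $O(\delta^{1/2})$ by \eref{eq3: boundary-skin estimates}; the term $(v,\tilde w-w\circ\bm\pi)_{\Gamma_h}$ is even smaller, $O(\delta)$, by \eref{eq3: boundary-skin estimates} and \eref{eq2: boundary-skin estimates}. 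In $R_w^2$ the bracketed pair is exactly the quantity bounded in \lref{lem: error from integration by parts on Gammah} (with $u=w$), contributing $O(\delta)$, and the final term $(\nabla_{\Gamma_h}v,\nabla_{\Gamma_h}(\tilde w-w\circ\bm\pi))_{\Gamma_h}$ is handled by \lref{lem: nablaGammah(v - vpi)}. In every case the factor other than $\|v\|_{H^1(\Omega_h;\Gamma_h)}$ is controlled by $\|w\|_{H^2(\Omega;\Gamma)}$ via the trace theorem and the stability of the extension $w\mapsto\tilde w$.

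The main obstacle is this last surface-gradient term. Because the dual solution enjoys only $w\in H^2(\Omega;\Gamma)$, I cannot invoke \cref{cor: u - u circ pi on Gammah}, which for $k\ge2$ would require $H^3$; instead I must apply \lref{lem: nablaGammah(v - vpi)} directly with $p=2$, whose bound $Ch^k\|\nabla\tilde w\|_{L^2(\Gamma)}+C\delta^{1/2}\|\nabla^2\tilde w\|_{L^2(\Gamma(\delta))}$ involves at most second derivatives of $\tilde w$ and is therefore $O(h)\|w\|_{H^2(\Omega;\Gamma)}$. A secondary subtlety is the datum $\varphi$ on the skin $\Omega_h\setminus\Omega$, which is not controlled pointwise by $w$; I would simply bound its contribution by $\|\varphi\|_{L^2(\Omega_h)}$, the normalising datum of the duality argument, which is absorbed into the factor $\|w\|_{H^2(\Omega;\Gamma)}$ appearing in \eref{eq: estimate of Rw(v)}. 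Collecting all contributions then yields the claimed $O(h)$ bound.
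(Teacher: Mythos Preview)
Your proposal is correct and matches the paper's approach: the identity is obtained by the same integration-by-parts bookkeeping the paper carries out explicitly, and for the estimate the paper simply says ``obtained by almost the same manner as \eref{eq: 1st estimate of Ru(v)} for $k = 1$,'' which is precisely your strategy of re-running those bounds with only $H^2$-regularity on $w$ and settling for one power of $h$. Your explicit remark that for the surface-gradient term one must invoke \lref{lem: nablaGammah(v - vpi)} directly (rather than \cref{cor: u - u circ pi on Gammah}, whose $k\ge2$ branch would demand $H^3$) is exactly the content hidden in the paper's phrase ``for $k=1$''; and your observation about $\|\varphi\|_{L^2(\Omega_h\setminus\Omega)}$ not being literally dominated by $\|w\|_{H^2(\Omega;\Gamma)}$ is a fair point that the paper elides, but is harmless in the intended application since both quantities are $O(1)$ under the normalisation.
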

\begin{proof}
	A direct computation shows
	\begin{align*}
		a_h(v, \tilde w) &= (\nabla v, \nabla\tilde w)_{\Omega_h} + (v, \tilde w)_{\Gamma_h} + (\nabla_{\Gamma_h} v, \nabla_{\Gamma_h}\tilde w)_{\Gamma_h} \\
			&= (v, -\Delta\tilde w)_{\Omega_h} + (v, \partial_{n_h}\tilde w)_{\Gamma_h} + (v, \tilde w)_{\Gamma_h} + (\nabla_{\Gamma_h} v, \nabla_{\Gamma_h}\tilde w)_{\Gamma_h} \\
			&= [(v, \varphi)_{\Omega_h} + (v, -\Delta\tilde w - \varphi)_{\Omega_h\setminus\Omega}] + (v, \partial_{n_h}\tilde w - (\partial_n w) \circ \pi)_{\Gamma_h} \\
			&\qquad	+ (v, \tilde w - w\circ\bm\pi)_{\Gamma_h} + (v, (\Delta_\Gamma w)\circ\bm\pi)_{\Gamma_h} + (\nabla_{\Gamma_h} v, \nabla_{\Gamma_h}\tilde w)_{\Gamma_h} + (v, \psi)_{\Gamma_h} \\
			&= (v, \varphi)_{\Omega_h} + (v, \psi)_{\Gamma_h} + R_w^1(v) + R_w^2(v),
	\end{align*}
	which is \eref{eq: L2 duality representation for phi and psi}.
	Estimate \eref{eq: estimate of Rw(v)} is obtained by almost the same manner as \eref{eq: 1st estimate of Ru(v)} for $k = 1$.
	The only difference is that no domain perturbation term involving $\psi$ appears this time (cf.\ \rref{rem: in case of transformation less regularity for tau is OK}).
\end{proof}

Next we show that $R_u(v)$ admits another equivalent representation if $v \in H^2(\Omega \cup \Gamma(\delta))$ and $v|_{\Gamma} \in H^2(\Gamma)$.
We make use of the integration by parts formula
\begin{equation} \label{eq: integration by parts in symmetric difference}
	(\Delta u, v)_{\Omega_h\triangle\Omega}' + (\nabla u, \nabla v)_{\Omega_h\triangle\Omega}' =  (\partial_{n_h}u, v)_{\Gamma_h} - (\partial_n u, v)_{\Gamma},
\end{equation}
where $(u, v)_{\Omega_h\triangle\Omega}' := (u, v)_{\Omega_h \setminus \Omega} - (u, v)_{\Omega \setminus \Omega_h}$.
\begin{prop}
	Let $k \ge 1$, $f \in H^1(\Omega)$, $\tau \in H^{3/2}(\Gamma)$.
	Assume that $u \in H^{\min\{k+1, 3\}}(\Omega; \Gamma)$ be the solution of \eref{eq: continuous problem}.
	Then, for $v \in H^2(\Omega \cup \Gamma(\delta))$ we have
	\begin{equation} \label{eq: another form of Ru(v)}
		R_u(v) = -(\tilde f, v)_{\Omega_h \triangle \Omega}' + (\tilde u - \tilde\tau, v)_{\Gamma_h \cup \Gamma}' + (\nabla\tilde u, \nabla v)_{\Omega_h\triangle\Omega}' + (\nabla_{\Gamma_h}\tilde u, \nabla_{\Gamma_h} v)_{\Gamma_h} - (\nabla_\Gamma u, \nabla_\Gamma v)_\Gamma,
	\end{equation}
	where $(u, v)_{\Gamma_h \cup \Gamma}' := (u, v)_{\Gamma_h} - (u, v)_\Gamma$.
	If in addition $v|_\Gamma \in H^2(\Gamma)$, the following estimate holds:
	\begin{equation} \label{eq: 2nd estimate of Ru(v)}
		|R_u(v)| \le C\delta (\|f\|_{H^1(\Omega)} + \|\tau\|_{H^{3/2}(\Gamma)}) (\|v\|_{H^2(\Omega \cup \Gamma(\delta))} + \|v\|_{H^2(\Gamma)}).
	\end{equation}
\end{prop}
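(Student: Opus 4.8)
The plan is to rewrite $R_u$ by using the equations satisfied by the exact solution together with the integration-by-parts identity \eref{eq: integration by parts in symmetric difference}, and then to estimate each resulting term with the boundary-skin machinery and the lemmas already available. First I would simplify $R_u^2(v) = ((\Delta_\Gamma u)\circ\bm\pi, v)_{\Gamma_h} + (\nabla_{\Gamma_h}\tilde u, \nabla_{\Gamma_h} v)_{\Gamma_h}$, since the two surface-gradient contributions combine into $\nabla_{\Gamma_h}\tilde u$. Because $\tilde u = u$ and $\tilde f = f$ in $\Omega$ with $-\Delta u = f$ there, the bulk residual over $\Omega_h\setminus\Omega$ vanishes on $\Omega\setminus\Omega_h$ and hence equals $(-\Delta\tilde u - \tilde f, v)'_{\Omega_h\triangle\Omega}$; applying \eref{eq: integration by parts in symmetric difference} to $\tilde u$ then trades $-(\Delta\tilde u, v)'_{\Omega_h\triangle\Omega}$ for $(\nabla\tilde u, \nabla v)'_{\Omega_h\triangle\Omega}$ together with the boundary contributions $-(\partial_{n_h}\tilde u, v)_{\Gamma_h} + (\partial_n u, v)_\Gamma$ (using $\partial_n\tilde u = \partial_n u$ on $\Gamma$).

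The second step is to remove all normal-derivative terms through the boundary condition $\partial_n u = \tau - u + \Delta_\Gamma u$. Composing it with $\bm\pi$ produces $(\tau\circ\bm\pi, v)_{\Gamma_h}$, $(u\circ\bm\pi, v)_{\Gamma_h}$ and $((\Delta_\Gamma u)\circ\bm\pi, v)_{\Gamma_h}$, which cancel against the matching terms of $R_u^1$ and $R_u^2$, leaving only $(\tilde u - \tilde\tau, v)_{\Gamma_h}$ from the $\Gamma_h$-scalar part. On $\Gamma$ I would use the same boundary condition and the surface integration-by-parts formula $(\Delta_\Gamma u, v)_\Gamma = -(\nabla_\Gamma u, \nabla_\Gamma v)_\Gamma$ (valid since $u|_\Gamma\in H^2(\Gamma)$ and $v|_\Gamma\in H^1(\Gamma)$) to rewrite $(\partial_n u, v)_\Gamma = (\tau - u, v)_\Gamma - (\nabla_\Gamma u, \nabla_\Gamma v)_\Gamma$. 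Recalling $\tilde u|_\Gamma = u$ and $\tilde\tau|_\Gamma = \tau$, the $\Gamma$-scalar terms assemble into $-(\tilde u - \tilde\tau, v)_\Gamma$, so the scalar boundary contributions collapse to $(\tilde u - \tilde\tau, v)'_{\Gamma_h\cup\Gamma}$. Collecting all pieces yields \eref{eq: another form of Ru(v)}.

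For the estimate \eref{eq: 2nd estimate of Ru(v)}, the skin-volume terms $(\tilde f, v)'_{\Omega_h\triangle\Omega}$ and $(\nabla\tilde u, \nabla v)'_{\Omega_h\triangle\Omega}$ are handled by the global form of \eref{eq2: boundary-skin estimates}, which bounds an $L^2(\Gamma(\delta))$-norm by $C\delta^{1/2}$ times an $H^1$-norm; applying it to both factors produces the required factor $\delta$, and the regularity bound $\|u\|_{H^2(\Omega;\Gamma)}\le C(\|f\|_{H^1}+\|\tau\|_{H^{3/2}})$ converts the solution norms into data norms. The scalar boundary term $(\tilde u - \tilde\tau, v)'_{\Gamma_h\cup\Gamma}$ is controlled directly by the $L^2$ surface-difference lemma, using $\tilde u,\tilde\tau\in H^2(\Omega\cup\Gamma(\delta))$ (here the hypothesis $\tau\in H^{3/2}(\Gamma)$ is exactly what guarantees $\tilde\tau\in H^2$).

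The delicate term, and where I expect the real work, is the surface-gradient mismatch $(\nabla_{\Gamma_h}\tilde u, \nabla_{\Gamma_h}v)_{\Gamma_h} - (\nabla_\Gamma u, \nabla_\Gamma v)_\Gamma$. A naive split fails: $\|\nabla_{\Gamma_h}(\tilde u - \tilde u\circ\bm\pi)\|_{L^2(\Gamma_h)}$ is only $O(h^k)$ by \cref{cor: u - u circ pi on Gammah} while $\|\nabla_{\Gamma_h} v\|_{L^2(\Gamma_h)}$ is merely $O(1)$, yielding $O(h^k)$ instead of $O(\delta)=O(h^{k+1})$. To recover the extra power I would insert $u\circ\bm\pi$ and $v\circ\bm\pi$ and write the difference as $D_1+D_2+D_3$ with $D_1 = (\nabla_{\Gamma_h}(\tilde u - \tilde u\circ\bm\pi), \nabla_{\Gamma_h}(v\circ\bm\pi))_{\Gamma_h}$, $D_2 = (\nabla_{\Gamma_h}(\tilde u - \tilde u\circ\bm\pi), \nabla_{\Gamma_h}(v - v\circ\bm\pi))_{\Gamma_h}$ and $D_3 = (\nabla_{\Gamma_h}(u\circ\bm\pi), \nabla_{\Gamma_h}v)_{\Gamma_h} - (\nabla_\Gamma u, \nabla_\Gamma v)_\Gamma$ (using $\tilde u\circ\bm\pi = u\circ\bm\pi$ on $\Gamma_h$). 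Then $D_1$ is bounded by \lref{lem: inner product between nabla(u - u circ pi) and nabla v circ pi}, which gains the full $\delta$ precisely because its second slot is $v\circ\bm\pi$ with $v|_\Gamma\in H^2(\Gamma)$; splitting $D_3$ further into $(\nabla_{\Gamma_h}(u\circ\bm\pi), \nabla_{\Gamma_h}(v - v\circ\bm\pi))_{\Gamma_h}$ and $(\nabla_{\Gamma_h}(u\circ\bm\pi), \nabla_{\Gamma_h}(v\circ\bm\pi))_{\Gamma_h} - (\nabla_\Gamma u, \nabla_\Gamma v)_\Gamma$ lets me use the same lemma with the roles of $u$ and $v$ exchanged, and \eref{eq: error between nabla Gammah and nabla Gamma}, respectively; finally $D_2$ is a product of two factors, $O(h^k)$ and $O(h^k+\delta^{1/2})$ (the latter from \lref{lem: nablaGammah(v - vpi)} since $v$ is only $H^2$), so $|D_2| = O(h^{2k}+h^k\delta^{1/2}) = O(\delta)$ for $k\ge1$. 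This is exactly where the extra hypothesis $v|_\Gamma\in H^2(\Gamma)$ is indispensable. Summing the four contributions gives \eref{eq: 2nd estimate of Ru(v)}.
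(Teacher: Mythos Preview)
Your argument is correct and follows the same route as the paper: the identity \eref{eq: another form of Ru(v)} is obtained exactly by combining the boundary condition (composed with $\bm\pi$ on $\Gamma_h$ and directly on $\Gamma$) with the integration-by-parts formula \eref{eq: integration by parts in symmetric difference}, and your four-piece decomposition $D_1,D_2,D_3$ (with $D_3$ split further) of the surface-gradient mismatch is precisely the paper's $I_1$--$I_4$, handled by the same lemmas. The only cosmetic difference is that the paper bounds the ``cross'' term $I_1=D_2$ via \cref{cor: u - u circ pi on Gammah} applied to both factors (giving $Ch^{2k}\le C\delta$), whereas you invoke \lref{lem: nablaGammah(v - vpi)} on the $v$-factor; both yield $O(\delta)$.
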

\begin{proof}
	Since $-\Delta u = f$ in $\Omega$ and $-\partial_n u - u + \tau + \Delta_\Gamma u = 0$ on $\Gamma$, it follows from \eref{eq: integration by parts in symmetric difference} that
	\begin{align*}
		R_u(v) &= (-\Delta\tilde u  - \tilde f, v)_{\Omega_h\triangle\Omega}' + (\partial_{n_h}\tilde u, v)_{\Gamma_h} + (\tilde u - \tilde\tau, v)_{\Gamma_h} + (\nabla_{\Gamma_h}\tilde u, \nabla_{\Gamma_h} v)_{\Gamma_h} \\
			&= - (\tilde f, v)_{\Omega_h\triangle\Omega}' + (\nabla\tilde u, \nabla v)_{\Omega_h\triangle\Omega}' + (\partial_n u, v)_{\Gamma} + (\tilde u - \tilde\tau, v)_{\Gamma_h} + (\nabla_{\Gamma_h}\tilde u, \nabla_{\Gamma_h} v)_{\Gamma_h} \\
			&= - (\tilde f, v)_{\Omega_h\triangle\Omega}' + (\nabla\tilde u, \nabla v)_{\Omega_h\triangle\Omega}' + (-u + \tau + \Delta_\Gamma u, v)_{\Gamma} + (\tilde u - \tilde\tau, v)_{\Gamma_h} + (\nabla_{\Gamma_h}\tilde u, \nabla_{\Gamma_h} v)_{\Gamma_h},
	\end{align*}
	which after the integration by parts on $\Gamma$ yields \eref{eq: another form of Ru(v)}.
	
	By the boundary-skin estimates, the regularity structure $\|u\|_{H^2(\Omega; \Gamma)} \le C(\|f\|_{L^2(\Omega)} + \|\tau\|_{L^2(\Gamma)})$, and the stability of extensions, the first three terms on the right-hand side of \eref{eq: another form of Ru(v)} is bounded as follows:
	\begin{align*}
		|(\tilde f, v)_{\Omega_h\triangle\Omega}'| &\le \|\tilde f\|_{L^2(\Gamma(\delta))} \|v\|_{L^2(\Gamma(\delta))} \le C \delta \|f\|_{H^1(\Omega)} \|v\|_{H^1(\Omega \cup \Gamma(\delta))}, \\
		|(\tilde u - \tilde\tau, v)_{\Gamma_h \cup \Gamma}'| &\le C \delta \|\tilde u - \tilde\tau\|_{H^2(\Omega \cup \Gamma(\delta))} \|v\|_{H^2(\Omega \cup \Gamma(\delta))} \le C \delta (\|f\|_{L^2(\Omega)} + \|\tau\|_{H^{3/2}(\Gamma)}) \|v\|_{H^2(\Omega \cup \Gamma(\delta))}, \\
		|(\nabla\tilde u, \nabla v)_{\Omega_h\triangle\Omega}'| &\le C \delta \|\nabla\tilde u\|_{H^1(\Omega \cup \Gamma(\delta))} \|\nabla v\|_{H^1(\Omega \cup \Gamma(\delta))} \le C \delta (\|f\|_{L^2(\Omega)} + \|\tau\|_{L^2(\Gamma)}) \|v\|_{H^2(\Omega \cup \Gamma(\delta))}.
	\end{align*}
	For the fourth and fifth terms of \eref{eq: another form of Ru(v)}, we start from the obvious equality
	\begin{align*}
		&(\nabla_{\Gamma_h}\tilde u, \nabla_{\Gamma_h} v)_{\Gamma_h} - (\nabla_\Gamma u, \nabla_\Gamma v)_\Gamma \\
		= \; &( \nabla_{\Gamma_h}(\tilde u - u\circ\bm\pi), \nabla_{\Gamma_h}(v - v\circ\bm\pi) )_{\Gamma_h} + ( \nabla_{\Gamma_h}(u\circ\bm\pi), \nabla_{\Gamma_h}(v - v\circ\bm\pi) )_{\Gamma_h} \\
		&\qquad + ( \nabla_{\Gamma_h}(\tilde u - u\circ\bm\pi), \nabla_{\Gamma_h}(v\circ\bm\pi) )_{\Gamma_h} + \big[ ( \nabla_{\Gamma_h}(u\circ\bm\pi), \nabla_{\Gamma_h}(v\circ\bm\pi) )_{\Gamma_h}  - (\nabla_\Gamma u, \nabla_\Gamma v)_\Gamma \big] \\
		=: \; &I_1 + I_2 + I_3 + I_4.
	\end{align*}
	By \cref{cor: u - u circ pi on Gammah}, $|I_1| \le Ch^{2k} \|\tilde u\|_{H^{\min\{k+1, 3\}}(\Omega \cup \Gamma(\delta))} \|v\|_{H^{\min\{k+1, 3\}}(\Omega \cup \Gamma(\delta))}$ (note that $h^{2k} \le C\delta$).
	From \lref{lem: inner product between nabla(u - u circ pi) and nabla v circ pi} we have
	\begin{equation*}
		|I_2| \le C \delta \|u\|_{H^2(\Gamma)} \|v\|_{H^2(\Omega \cup \Gamma(\delta))}, \qquad |I_3| \le C \delta \|\tilde u\|_{H^2(\Omega \cup \Gamma(\delta))} \|v\|_{H^2(\Gamma)}.
	\end{equation*}
	Finally, $|I_4| \le C \delta \|u\|_{H^1(\Gamma)} \|v\|_{H^1(\Gamma)}$ by \eref{eq: error between nabla Gammah and nabla Gamma}.
	Combining the estimates above concludes \eref{eq: 2nd estimate of Ru(v)}.
\end{proof}
\begin{rem}
	We need $f \in H^1(\Omega)$ and $\tau \in H^{3/2}(\Gamma)$ even for $k = 1$.
\end{rem}

We are in the position to state the $L^2$-error estimate in $\Omega_h$ and on $\Gamma_h$.
\begin{thm} \label{thm: L2 error estimate}
	Let $k + 1 > d/2$.
	Assume that $f \in H^1(\Omega)$, $\tau \in H^{3/2}(\Gamma)$ for $k = 1, 2$ and that $f \in H^{k-1}(\Omega)$, $\tau \in H^{k-1}(\Gamma)$ for $k \ge 3$.
	Then we have
	\begin{equation*}
		\|\tilde u - u_h\|_{L^2(\Omega_h; \Gamma_h)} \le C_{f, \tau}h^{k+1},
	\end{equation*}
	where $C_{f, \tau} := C (\|f\|_{H^{\max\{k-1, 1\}}(\Omega)} + \|\tau\|_{H^{\max\{k-1, 3/2\}}(\Gamma)})$.
\end{thm}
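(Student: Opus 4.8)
The plan is to run an Aubin--Nitsche duality argument built on the dual problem \eqref{eq: dual problem}, combining the representation \eqref{eq: L2 duality representation for phi and psi} with the asymptotic Galerkin orthogonality \eqref{eq: asymptotic Galerkin orthogonality}. Write $e := \tilde u - u_h$ and fix arbitrary $\varphi \in L^2(\Omega_h)$, $\psi \in L^2(\Gamma_h)$ with $\|\varphi\|_{L^2(\Omega_h)} = \|\psi\|_{L^2(\Gamma_h)} = 1$, letting $w \in H^2(\Omega;\Gamma)$ denote the associated dual solution. Since $\|e\|_{L^2(\Omega_h;\Gamma_h)} = \|e\|_{L^2(\Omega_h)} + \|e\|_{L^2(\Gamma_h)}$ is recovered as the supremum of $(e,\varphi)_{\Omega_h} + (e,\psi)_{\Gamma_h}$ over such $\varphi,\psi$ (each pairing being independently maximized by $\varphi = e/\|e\|_{L^2(\Omega_h)}$ and $\psi = e/\|e\|_{L^2(\Gamma_h)}$), it suffices to bound this pairing by $C_{f,\tau}h^{k+1}$ uniformly in $\varphi,\psi$.

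First I would insert \eqref{eq: L2 duality representation for phi and psi} with $v = e$ to obtain $(e,\varphi)_{\Omega_h} + (e,\psi)_{\Gamma_h} = a_h(e,\tilde w) - R_w(e)$, then use \eqref{eq: asymptotic Galerkin orthogonality} together with the symmetry of $a_h$ to split $a_h(e,\tilde w) = a_h(e, \tilde w - \mathcal I_h\tilde w) + R_u(\mathcal I_h\tilde w)$. This yields three contributions. The term $a_h(e, \tilde w - \mathcal I_h\tilde w)$ is handled by Cauchy--Schwarz with the $H^1$-error bound $\|e\|_{H^1(\Omega_h;\Gamma_h)} \le C_{f,\tau}h^k$ of \tref{thm: H1 error estimate} and the $H^2$-version of \tref{thm: interpolation error estimate}, namely $\|\tilde w - \mathcal I_h\tilde w\|_{H^1(\Omega_h;\Gamma_h)} \le Ch\|w\|_{H^2(\Omega;\Gamma)}$, giving $C_{f,\tau}h^{k+1}\|w\|_{H^2(\Omega;\Gamma)}$. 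The term $R_w(e)$ is controlled directly by \eqref{eq: estimate of Rw(v)} and the same $H^1$-error bound, again yielding $C_{f,\tau}h^{k+1}\|w\|_{H^2(\Omega;\Gamma)}$.

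The delicate term is $R_u(\mathcal I_h\tilde w)$, and I expect it to be the main obstacle. One cannot apply the sharp $O(\delta)$ bound \eqref{eq: 2nd estimate of Ru(v)} directly to $\mathcal I_h\tilde w$, since that estimate requires an $H^2(\Omega\cup\Gamma(\delta))$-argument with an $H^2(\Gamma)$-trace, whereas $\mathcal I_h\tilde w$ is only piecewise smooth and lies merely in $H^1(\Omega_h;\Gamma_h)$. The resolution is the splitting $R_u(\mathcal I_h\tilde w) = R_u(\tilde w) + R_u(\mathcal I_h\tilde w - \tilde w)$. For the smooth part, $w \in H^2(\Omega;\Gamma)$ gives $\tilde w \in H^2(\Omega\cup\Gamma(\delta))$ with $\tilde w|_\Gamma = w|_\Gamma \in H^2(\Gamma)$, so \eqref{eq: 2nd estimate of Ru(v)} applies and yields $C\delta(\|f\|_{H^1(\Omega)} + \|\tau\|_{H^{3/2}(\Gamma)})\|w\|_{H^2(\Omega;\Gamma)} = O(h^{k+1})$ since $\delta = Ch^{k+1}$; this is exactly why the theorem demands $f \in H^1$, $\tau \in H^{3/2}$ even for small $k$. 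For the remainder, whose argument is only $H^1$-regular, I would use the coarser bound \eqref{eq: 1st estimate of Ru(v)} with the interpolation estimate $\|\mathcal I_h\tilde w - \tilde w\|_{H^1(\Omega_h;\Gamma_h)} \le Ch\|w\|_{H^2(\Omega;\Gamma)}$, producing $C_{f,\tau}h^k \cdot Ch\|w\|_{H^2(\Omega;\Gamma)} = C_{f,\tau}h^{k+1}\|w\|_{H^2(\Omega;\Gamma)}$.

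Finally I would invoke the dual regularity $\|w\|_{H^2(\Omega;\Gamma)} \le C(\|\varphi\|_{L^2(\Omega)} + \|\psi\circ\bm\pi^*\|_{L^2(\Gamma)}) \le C$, using the regularity structure recorded in the Introduction, the zero extension of $\varphi$, and the surface-integral equivalence \eqref{eq1: equivalence of surface integrals} to pass from $\psi\circ\bm\pi^*$ on $\Gamma$ to $\psi$ on $\Gamma_h$. Consequently all three contributions are bounded by $C_{f,\tau}h^{k+1}$ independently of $\varphi,\psi$, and taking the supremum over admissible $\varphi,\psi$ gives $\|\tilde u - u_h\|_{L^2(\Omega_h;\Gamma_h)} \le C_{f,\tau}h^{k+1}$. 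The only routine points left are verifying the $H^2$-regularity interpolation bound for $w$ as a low-order instance of \tref{thm: interpolation error estimate}, and the bookkeeping of the regularity hypotheses across the ranges $k = 1,2$ and $k\ge 3$ so that the $H^1$-error constant is absorbed into $C_{f,\tau}$.
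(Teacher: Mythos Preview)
Your proposal is correct and follows essentially the same route as the paper: both use the duality representation \eqref{eq: L2 duality representation for phi and psi}, split $a_h(e,\tilde w)$ via the asymptotic Galerkin orthogonality \eqref{eq: asymptotic Galerkin orthogonality} into the four terms $a_h(e,\tilde w - \mathcal I_h\tilde w)$, $R_u(\tilde w)$, $R_u(\mathcal I_h\tilde w - \tilde w)$, and $-R_w(e)$, and bound them respectively by the $H^1$-error plus interpolation estimate, \eqref{eq: 2nd estimate of Ru(v)}, \eqref{eq: 1st estimate of Ru(v)}, and \eqref{eq: estimate of Rw(v)}. The only cosmetic difference is that the paper fixes the specific normalizers $\varphi = e/\|e\|_{L^2(\Omega_h)}$ and $\psi = e/\|e\|_{L^2(\Gamma_h)}$ from the outset rather than taking a supremum.
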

\begin{proof}
	We consider the solution $w$ of \eref{eq: dual problem} obtained from the following choices of $\varphi$ and $\psi$:
	\begin{equation*}
		\varphi = \frac{\tilde u - u_h}{\|\tilde u - u_h\|_{L^2(\Omega)}}, \qquad \psi = \frac{\tilde u - u_h}{\|\tilde u - u_h\|_{L^2(\Gamma)}}.
	\end{equation*}
	Taking then $v = \tilde u - u_h$ in \eref{eq: L2 duality representation for phi and psi} and using \eref{eq: asymptotic Galerkin orthogonality}, we obtain
	\begin{align*}
		\|\tilde u - u_h\|_{L^2(\Omega_h; \Gamma_h)} &= a_h(\tilde u - u_h, \tilde w) - R_w(\tilde u - u_h) \\
			&= a_h(\tilde u - u_h, \tilde w - w_h) - R_u(\tilde w - w_h) - R_w(\tilde u - u_h) + R_u(\tilde w),
	\end{align*}
	where we set $w_h := \mathcal I_h \tilde w$.
	Since $\|\tilde u - u_h\|_{H^1(\Omega_h; \Gamma_h)} \le C_{f, \tau} h^{k}$ by \tref{thm: H1 error estimate} and $\|w\|_{H^2(\Omega; \Gamma)} \le C$, we find from \tref{thm: interpolation error estimate} and the residual estimates \eref{eq: 1st estimate of Ru(v)}, \eref{eq: estimate of Rw(v)}, and \eref{eq: 2nd estimate of Ru(v)} that
	\begin{align*}
		|a_h(\tilde u - u_h, \tilde w - w_h)| &\le C\|\tilde u - u_h\|_{H^1(\Omega_h; \Gamma_h)} \|\tilde w - w_h\|_{H^1(\Omega_h; \Gamma_h)} \le C_{f, \tau} h^{k+1}, \\
		| R_u(\tilde w - w_h) - R_w(\tilde u - u_h) | &\le C_{f, \tau} h^k \|\tilde w - w_h\|_{H^1(\Omega_h; \Gamma_h)} + Ch \|\tilde u - u_h\|_{H^1(\Omega_h; \Gamma_h)} \le C_{f, \tau} h^{k+1}, \\
		| R_u(\tilde w) | &\le C_{f, \tau} \delta (\|\tilde w\|_{H^2(\Omega \cup \Gamma(\delta))} + \|w\|_{H^2(\Gamma)}) \le C_{f, \tau}h^{k+1},
	\end{align*}
	where the stability of extensions has been used.
	This proves the theorem.
\end{proof}

% 5
\section{Numerical example}
Let $\Omega = \{(x, y) \in \mathbb R^2 \mid x^2 + y^2 = 1\}$ be the unit disk (thus $\Gamma$ is the unit circle) and set the exact solution to be
\begin{equation*}
	u(x, y) = 10x^2 y.
\end{equation*}
With the linear finite element method, i.e., $k = 1$, we compute approximate solutions using the software \texttt{FreeFEM}.
The surface gradient $\nabla_{\Gamma_h} u_h$ is computed by
\begin{equation*}
	\nabla_{\Gamma_h} u_h = (I - \bm n_h \otimes \bm n_h) \nabla u_h \quad\text{on }\; \Gamma_h.
\end{equation*}
The errors are computed by interpolating the exact solution to the quadratic finite element spaces.
The results are reported in Table \ref{tab: error}, where $N$ denotes the number of nodes on the boundary.
We see that the $H^1(\Omega_h; \Gamma_h)$- and $L^2(\Omega_h; \Gamma_h)$-errors behave as $O(h)$ and $O(h^2)$ respectively, which is consistent with the theoretical results established in Theorems \ref{thm: H1 error estimate} and \ref{thm: L2 error estimate}.

\begin{table}[htbp]
	\small
	\centering
	\caption{behavior of the error in $H^1(\Omega_h)$, $H^1(\Gamma_h)$, $L^2(\Omega_h)$, and $L^2(\Gamma_h)$}
	\label{tab: error}
	\begin{tabular}{cccccc}
		$N$ & $h$ & $\|\nabla(u - u_h)\|_{L^2(\Omega_h)}$ & $\|\nabla_{\Gamma_h} (u - u_h)\|_{L^2(\Gamma_h)}$ & $\|u - u_h\|_{L^2(\Omega_h)}$ & $\|u - u_h\|_{L^2(\Gamma_h)}$ \\
		\hline
		32 & 0.293 & 1.84 & 1.58 & 7.23E-2 & 0.129 \\
		64 & 0.161 & 0.93 & 0.794 & 1.81E-2 & 3.27E-2 \\
		128 & 9.44E-2 & 0.460 & 0.397 & 4.49E-3 & 8.11E-3 \\
		256 & 4.26E-2 & 0.229 & 0.199 & 1.09E-3 & 2.03E-3
	\end{tabular}
\end{table}

% acknoledgements
%\section*{Acknowledgements}
%The authors thank Professor ...
%This work was supported by ...

% bibliography
%\bibliographystyle{siam}
%\bibliography{bibsrc}

% Appendix
\appendix
% Apx A
\section{Proof of \lref{lem: nablaGammah(v - vpi)}} \label{apx: nablaGammah(v - vpi)}
We consider the case $p < \infty$ only because $p = \infty$ can be addressed by an obvious modification.
We start from the local coordinate representation
\begin{equation} \label{eq1: proof of nabla (v - v circ pi)}
	\int_S |\nabla_{\Gamma_h} (v - v\circ\bm\pi)|^p \, d\gamma_h = \int_{S'} \Big| \sum_{\alpha} \bm g_h^\alpha \partial_\alpha \big[ v(\bm\Phi_h(\bm z')) - v(\bm\Phi(\bm z')) \big] \Big|^p \sqrt{\operatorname{det}G_h} \, d\bm z'.
\end{equation}
Since $\bm\Psi(\bm z', t) = \bm\Phi(\bm z') + t \bm n(\bm\Phi(\bm z'))$, one has
\begin{equation*}
	v(\bm\Phi_h(\bm z')) - v(\bm\Phi(\bm z')) = \int_0^{t^*(\bm\Phi(\bm z'))} \bm n(\bm\Phi(\bm z')) \cdot (\nabla v) \circ \bm\Psi(\bm z', t) \, dt \quad (\bm z' \in S').
\end{equation*}
Consequently,
\begin{align*}
	\sum_{\alpha} \bm g_h^\alpha \partial_\alpha \big[ v(\bm\Phi_h(\bm z')) - v(\bm\Phi(\bm z')) \big]
		&= \sum_{\alpha=1}^{d-1} \bm g_h^\alpha \, \partial_\alpha(t^* \circ \bm\Phi) \, \big[ (\bm n\circ\bm\Phi) \cdot ((\nabla v) \circ \bm\Phi_h) \big] \\
		&+ \sum_{\alpha=1}^{d-1} \bm g_h^\alpha \int_0^{t^*\circ\bm\Phi} \Big( \partial_\alpha(\bm n\circ\bm\Phi) \cdot (\nabla v)\circ\bm\Psi + \bm n\circ\bm\Phi \cdot (\nabla^2 v)\circ\bm\Psi \cdot \partial_\alpha\bm\Psi \Big) \, dt,
\end{align*}
where $\bm a \cdot A \cdot \bm b$ means ${}^t\!\bm a A \bm b$ for vectors $\bm a, \bm b$ and a matrix $A$.

For the first term, since $|\partial_\alpha (t^*\circ\bm\Phi)| \le Ch_S^k$ by \eref{eq: global t*},
\begin{equation} \label{eq2: proof of nabla (v - v circ pi)}
	\int_{S'} \Big| \sum_{\alpha=1}^{d-1} \bm g_h^\alpha \, \partial_\alpha(t^* \circ \bm\Phi) \, \big[ (\bm n\circ\bm\Phi) \cdot ((\nabla v) \circ \bm\Phi_h) \big] \Big|^p \sqrt{\operatorname{det}G_h} \, d\bm z' \le Ch_S^{kp} \|\nabla u\|_{L^p(S)}^p.
\end{equation}
For the second term, since $|\bm g_h^\alpha| \le C, |\partial_\alpha(\bm n\circ\bm\Phi)| \le C$, $|t^*\circ\bm\Phi| \le C\delta_S$, and $|\partial_\alpha\bm\Psi| = |\partial_\alpha\bm\Phi + t \partial_\alpha(\bm n\circ\bm\Phi)| \le C$, we obtain
\begin{align}
	&\int_{S'} \bigg| \sum_{\alpha=1}^{d-1} \bm g_h^\alpha \int_0^{t^*\circ\bm\Phi} \Big( \partial_\alpha(\bm n\circ\bm\Phi) \cdot (\nabla v)\circ\bm\Psi + \bm n\circ\bm\Phi \cdot (\nabla^2 v)\circ\bm\Psi \cdot \partial_\alpha\bm\Psi \Big) \, dt \bigg|^p \sqrt{\operatorname{det}G_h} \, d\bm z' \notag \\
	\le \; &C \int_{S'} \bigg| \int_{-\delta_S}^{\delta_S} \Big( |(\nabla v)\circ\bm\Psi| + |(\nabla^2 v)\circ\bm\Psi| \Big) \, dt \bigg|^p \, d\bm z' \notag \\
	\le \; &C \delta_S^{p-1} \int_{S'} \int_{-\delta_S}^{\delta_S} \Big( |(\nabla v)\circ\bm\Psi|^p + |(\nabla^2 v)\circ\bm\Psi|^p \Big) \, dt \, d\bm z' \notag \\
	\le \; &C \delta_S^{p-1} ( \|\nabla v\|_{L^p(\bm\pi(S, \delta_S))}^p + \|\nabla^2 v\|_{L^p(\bm\pi(S, \delta_S))}^p ). \label{eq3: proof of nabla (v - v circ pi)}
\end{align}
where we have used H\"older's inequality and \eref{eq: local tubular neighborhood equivalence} in the third and fourth lines, respectively.
Substituting \eref{eq2: proof of nabla (v - v circ pi)} and \eref{eq3: proof of nabla (v - v circ pi)} into \eref{eq1: proof of nabla (v - v circ pi)}, we deduce that
\begin{equation*}
	\|\nabla_{\Gamma_h} (v - v\circ\bm\pi) \|_{L^p(S)} \le Ch_S^k \|\nabla v\|_{L^p(S)} + C \delta_S^{1-1/p} ( \|\nabla v\|_{L^p(\bm\pi(S, \delta_S))} + \|\nabla^2 v\|_{L^p(\bm\pi(S, \delta_S))} ).
\end{equation*}
Since $\|\nabla v\|_{L^p(\bm\pi(S, \delta_S))} \le C \delta_S^{1/p} \|\nabla v\|_{L^p(S)} + C \delta_S \|\nabla^2 v\|_{L^p(\bm\pi(S, \delta_S))}$ by \eref{eq2': boundary-skin estimates} and $h_S \le 1$, we conclude \eref{eq1: conclusion of lemma which bounds nabla(v - v circ pi)}.
Estimate \eref{eq2: conclusion of lemma which bounds nabla(v - v circ pi)} follows from \eref{eq1: conclusion of lemma which bounds nabla(v - v circ pi)} because $\|\nabla v\|_{L^p(S)} \le C(\|\nabla v\|_{L^p(\bm\pi(S))} + \delta_S^{1-1/p} \|\nabla^2 v\|_{L^p(\bm\pi(S, \delta_S))})$ by \eref{eq3: boundary-skin estimates} and \eref{eq1: equivalence of surface integrals}.
This completes the proof of \lref{lem: nablaGammah(v - vpi)}.

% Apx B
\section{Stability of $\mathcal I_h$} \label{sec: stability of Ih}
\begin{proof}[Proof of \lref{lem1: stability of Ih}]
	We focus only on the estimate on $S$; the one on $T$ can be proved similarly.
	It follows from \eref{eq: nodal basis estimate on S} and \eref{eq: Linfty norm of psip} that
	\begin{equation} \label{eq: Ihu in the Hm norm on S}
	\begin{aligned}
		\|\nabla_S^m (\mathcal I_h u)\|_{L^2(S)} &\le \sum_{\bm p \in \mathcal N_h \cap S} |(v, \psi_{\bm p})_{L^2(\sigma_{\bm p})}| \, \|\nabla_S^m \phi_{\bm p}\|_{L^2(S)}
			\le Ch_S^{-m + (d-1)/2} \sum_{\bm p \in \mathcal N_h \cap S} \|v\|_{L^1(\sigma_{\bm p})} \|\psi_{\bm p}\|_{L^\infty(\sigma_{\bm p})} \\
		&\le Ch_S^{-m - (d-1)/2} \sum_{S_1 \in \mathcal S_h(S)} \|v\|_{L^1(S_1)}
		\le Ch_S^{-m} \sum_{S_1 \in \mathcal S_h(S)} \|v\|_{L^2(S_1)},
	\end{aligned}
	\end{equation}
	where $\operatorname{meas}_{d-1}(S_1) \le Ch_S^{d-1}$ is used in the last line.
	By \pref{prop: transformation between S and Shat}(ii), for $S_1 \in \mathcal S_h(S)$ we have
	\begin{equation} \label{eq: v in the L2 norm on S1}
		\|v\|_{L^2(S_1)} \le Ch_S^{(d-1)/2} \|v \circ \bm F_S\|_{L^2(\hat S')} \le Ch_S^{(d-1)/2} \|v \circ \bm F_S\|_{H^1(\hat S')}
			\le C \sum_{l=0}^1 h_S^l \|\nabla_{S_1}^l v\|_{L^2(S_1)}
	\end{equation}
	where $\hat S'$ is a projected image of $\hat S := \bm F_{T_{S_1}}^{-1}(S_1)$.
	Substitution of \eref{eq: v in the L2 norm on S1} into \eref{eq: Ihu in the Hm norm on S} concludes \eref{eq: local stability of Ihv on S}.
\end{proof}

\begin{proof}[Proof of \lref{lem2: stability of Ih}]
	We focus on the estimate on $S$; the one on $T$ can be proved similarly.

	We introduce a reference macro element $\hat M_S' \subset \mathbb R^{d-1}$ which is a union of $\# \mathcal S_h(S)$ simplices of dimension $d-1$.
	By the regularity of the meshes, there is only a finite number of possibilities for $\hat M_S'$, which is independent of $h$ and $S$.
	There is a homeomorphism $\bm F_{M_S}: \hat M_S' \to M_S$ such that its restriction to each $(d-1)$-simplex $\hat S_1'$ belongs to $\mathbb P_k(\hat S_1')$ and is a $C^k$-diffeomorphism.
	
	For arbitrary $\hat P \in \mathbb P_k(\hat M_S')$, observe that $P := \hat P \circ \bm F_{M_S}^{-1} \in \operatorname{span}\{\phi_{\bm p}\}_{\bm p \in \mathcal N_h \cap M_S}$, and hence $(\mathcal I_h P)|_S = P|_S$.
	Therefore, it follows from \lref{lem1: stability of Ih} and \pref{prop: transformation between S and Shat}(ii) that
	\begin{align}
		\|\nabla_S^m (v - \mathcal I_h v)\|_{L^2(S)} &\le \|\nabla_S^m (v - P)\|_{L^2(S)} + \|\nabla_S^m \mathcal I_h(P - v)\|_{L^2(S)} \notag \\
			&\le C \sum_{l=0}^1 h_S^{l-m} \sum_{S_1 \in \mathcal S_h(S)} \|\nabla_{S_1}^l (v - P)\|_{L^2(S_1)} \notag \\
			&\le C \sum_{l=0}^1 h_S^{-m+(d-1)/2} \|\nabla_{\hat{\bm x}'}^l (v \circ \bm F_{M_S} - \hat P)\|_{L^2(\hat M_S')}. \label{eq1: proof of local interpolation error}
	\end{align}
	The Bramble--Hilbert theorem, combined with an appropriate choice of a constant function for $\hat P$, yields
	\begin{equation} \label{eq2: proof of local interpolation error}
		\|\nabla_{\hat{\bm x}'}^l (v \circ \bm F_{M_S} - \hat P)\|_{L^2(\hat M_S')} \le C \|\nabla_{\hat{\bm x}'} (v \circ \bm F_{M_S})\|_{L^2(\hat M_S')} 
			\le C h_S^{1-(d-1)/2} \|v\|_{H^1(M_S)}
	\end{equation}
	for $l = 0, 1$, where we have used \pref{prop: transformation between S and Shat}(ii) again.
	Now \eref{eq: local interpolation error on S} results from \eref{eq1: proof of local interpolation error} and \eref{eq2: proof of local interpolation error}.
\end{proof}
\begin{rem}
	The argument above closely follows that of \cite[p.\ 1899]{BerGir1998}.
	Because $\bm F_{M_S} \notin H^2(\hat M_S')$ in general, we considered the situation in which only $H^1(S_1)$-norms of $v$ appear.
\end{rem}

% Apx C
\section{Error estimates in the exact domain} \label{sec: estimate in exact domain}
% C.1
\subsection{Extension of finite element functions}
We define a natural extension of $v_h \in V_h$ to $\Gamma(\delta)$ by
\begin{equation*}
	\bar v_h|_{\bm\pi(S, \delta) \setminus \Omega_h} = \hat v_h \circ \bm F_{T_S}^{-1} \quad (S \in \mathcal S_h).
\end{equation*}
Here note that $\hat v_h = v_h|_{T_S} \circ \bm F_{T_S} \in \mathbb P_k(\hat T)$ as well as $\bm F_{T_S} \in [\mathbb P_k(\hat T)]^d$ can be naturally extended (or extrapolated) to $2\hat T$, which doubles $\hat T$ by similarity with respect to its barycenter.
Because $h$ is sufficiently small, we may assume that $\bm F_{T_S}$ is a $C^k$-diffeomorphism defined in $2\hat T$ and that $\bm\pi(S, \delta) \subset \bm F_{T_S}(2\hat T) =: 2T_S$.
The extended function $\hat v_h \circ \bm F_{T_S}^{-1} \in C^\infty(2T_S)$ is denoted by $\overline{v_{h, T_S}}$.

\begin{rem} \label{rem: abuse of notation}
	The discrete extension satisfies only $\bar v_h \in L^2(\Omega \cup \Gamma(\delta))$ and $\bar v_h|_{\Gamma} \in L^2(\Gamma)$ globally, since $\bar v_h$ may be discontinuous across $\{ \bm x \in \Omega \setminus \Omega_h \mid \bm\pi(\bm x) \in \bm\pi(\partial S) \}$ for $S \in \mathcal S_h$ (i.e.\ the ``lateral part'' of the boundary of $\bm\pi(S, \delta) \setminus T_S$; cf.\ Figure \ref{fig1}).
	Nevertheless, for simplicity in reading, we will allow for the following abuse of notation:
	\begin{align*}
		\|\nabla(u - \bar u_h)\|_{L^2(\Omega)} &= \Big( \|\nabla(u - u_h)\|_{L^2(\Omega \cap \Omega_h)}^2 + \sum_{S \in \mathcal S_h} \|\nabla(u - \bar u_h)\|_{L^2(\bm\pi(S, \delta) \cap (\Omega \setminus \Omega_h))}^2 \Big)^{1/2}, \\
		\|\nabla(u - \bar u_h)\|_{L^2(\Gamma)} &= \Big( \sum_{S \in \mathcal S_h} \|\nabla_\Gamma (u - \bar u_h)\|_{L^2(\bm\pi(S))}^2 \Big)^{1/2}.
	\end{align*}
	Note, however, that $\bar v_h$ is continuous if $d = 2$ and the nodes of $\Gamma_h$ lie exactly on $\Gamma$.
\end{rem}

The discrete extension $\bar v_h$ can be estimated in $\Omega \setminus \Omega_h$ as follows.
\begin{lem} \label{lem: estimate in Omega minus Omegah}
	Let $v_h \in V_h$ and $S \in \mathcal S_h$. Then, for $p \in [1, \infty]$ we have
	\begin{align*}
		\|\overline{v_{h, T_S}}\|_{L^p(\bm\pi(S, \delta_S))} &\le C \Big( \frac{\delta_S}{h_S} \Big)^{1/p} \|v_h\|_{L^p(T_S)}, \\
		\|\nabla^m \overline{v_{h, T_S}}\|_{L^p(\bm\pi(S, \delta_S))} &\le C \Big( \frac{\delta_S}{h_S} \Big)^{1/p} h_S^{1-m} \|\nabla v_h\|_{L^p(T_S)} \quad (m \ge 1).
	\end{align*}
\end{lem}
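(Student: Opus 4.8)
The plan is to pull every quantity back to the reference element via $\bm F_{T_S}$, exploit the equivalence of all norms on the finite-dimensional polynomial space on the fixed reference domain, and scale back while tracking the powers of $h_S$ and the one nontrivial geometric quantity, namely the measure of the pullback set $\hat A := \bm F_{T_S}^{-1}(\bm\pi(S, \delta_S)) \subset 2\hat T$. First I would record the bound $|\hat A| \le C\,\delta_S / h_S$. Indeed, taking $v \equiv 1$ in \eref{eq: local tubular neighborhood equivalence} gives $|\bm\pi(S,\delta_S)| \le C\,|S'|\,\delta_S \le C h_S^{d-1}\delta_S$, while the change of variables $\bm x = \bm F_{T_S}(\hat{\bm x})$ together with the lower Jacobian bound $|\det\nabla_{\hat{\bm x}}\bm F_{T_S}| \ge C_1 h_S^d$ of \rref{rem: after regularity of order k}(iv) yields $|\bm\pi(S,\delta_S)| \ge C_1 h_S^d\,|\hat A|$; combining the two proves the claim. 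This factor is precisely where the ratio $\delta_S/h_S$ of the statement comes from.

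For the first estimate I would change variables and use the upper Jacobian bound $|\det\nabla_{\hat{\bm x}}\bm F_{T_S}| \le C_2 h_S^d$ to get
$$\int_{\bm\pi(S,\delta_S)} |\overline{v_{h,T_S}}|^p\,d\bm x = \int_{\hat A} |\hat v_h|^p\,|\det\nabla_{\hat{\bm x}}\bm F_{T_S}|\,d\hat{\bm x} \le C h_S^d\,|\hat A|\,\|\hat v_h\|_{L^\infty(2\hat T)}^p.$$
Since $\hat v_h \in \mathbb P_k(2\hat T)$ lies in a finite-dimensional space on a fixed domain, $\|\hat v_h\|_{L^\infty(2\hat T)} \le C\|\hat v_h\|_{L^p(\hat T)}$; moreover the lower Jacobian bound applied to $\int_{T_S}|v_h|^p\,d\bm x$ gives $\|\hat v_h\|_{L^p(\hat T)}^p \le C h_S^{-d}\|v_h\|_{L^p(T_S)}^p$. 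Inserting $|\hat A| \le C\delta_S/h_S$ and taking $p$-th roots produces the first inequality.

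For the derivative estimate I would differentiate the composition $\overline{v_{h,T_S}} = \hat v_h \circ \bm F_{T_S}^{-1}$ by the Fa\`a di Bruno formula: every resulting term pairs some $(\nabla_{\hat{\bm x}}^j \hat v_h)\circ\bm F_{T_S}^{-1}$, $1\le j\le m$, with a product of derivatives of $\bm F_{T_S}^{-1}$ whose orders sum to $m$, and by $\|\nabla_{\bm x}^l\bm F_{T_S}^{-1}\|_{L^\infty} \le Ch_S^{-l}$ each such product is $O(h_S^{-m})$, whence $|\nabla_{\bm x}^m\overline{v_{h,T_S}}| \le Ch_S^{-m}\sum_{j=1}^m |(\nabla_{\hat{\bm x}}^j\hat v_h)\circ\bm F_{T_S}^{-1}|$. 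Applying the change-of-variables bound of the previous step to each term and using the finite-dimensional equivalence $\|\nabla_{\hat{\bm x}}^j\hat v_h\|_{L^\infty(2\hat T)} \le C\|\nabla_{\hat{\bm x}}\hat v_h\|_{L^p(\hat T)}$ (valid for $j\ge 1$ since $\nabla_{\hat{\bm x}}^j$ annihilates constants) gives $\int_{\bm\pi(S,\delta_S)}|(\nabla_{\hat{\bm x}}^j\hat v_h)\circ\bm F_{T_S}^{-1}|^p\,d\bm x \le C h_S^d(\delta_S/h_S)\|\nabla_{\hat{\bm x}}\hat v_h\|_{L^p(\hat T)}^p$. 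Finally the chain rule $\nabla_{\hat{\bm x}}\hat v_h = (\nabla_{\bm x}v_h\circ\bm F_{T_S})\,\nabla_{\hat{\bm x}}\bm F_{T_S}$ with $\|\nabla_{\hat{\bm x}}\bm F_{T_S}\|_{L^\infty}\le Ch_S$ and the Jacobian bounds yields $\|\nabla_{\hat{\bm x}}\hat v_h\|_{L^p(\hat T)}^p \le Ch_S^{p-d}\|\nabla v_h\|_{L^p(T_S)}^p$. Collecting the powers $h_S^{-mp}\cdot h_S^d\cdot(\delta_S/h_S)\cdot h_S^{p-d} = (\delta_S/h_S)\,h_S^{(1-m)p}$ and taking $p$-th roots gives the second inequality.

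The main obstacle is the bookkeeping of $h_S$-powers through the two changes of variables, concentrated in the identification $|\hat A| \sim \delta_S/h_S$: this reflects that after pullback the tubular layer $\bm\pi(S,\delta_S)$ occupies only an $O(\delta_S/h_S)$ fraction of the reference element. A secondary point requiring care is that the Fa\`a di Bruno expansion relies on $\|\nabla_{\bm x}^l\bm F_{T_S}^{-1}\|_{L^\infty} \le Ch_S^{-l}$, which \rref{rem: after regularity of order k}(iv) supplies only for $l \le k+1$, so the argument is carried out for $m$ in that range (which suffices for the applications), and the $p=\infty$ case follows by the obvious modification of the change-of-variables estimates.
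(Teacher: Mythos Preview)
Your proof is correct and follows essentially the same route as the paper: the paper applies H\"older directly on $\bm\pi(S,\delta_S)$ to obtain the factor $\operatorname{meas}_d(\bm\pi(S,\delta_S))^{1/p} \le C(h_S^{d-1}\delta_S)^{1/p}$ and then passes to $L^\infty(2T)$, while you pull back first and bound $|\hat A| \le C\delta_S/h_S$ --- these are the same step, since $\operatorname{meas}_d(\bm\pi(S,\delta_S)) \approx h_S^d|\hat A|$, and the remaining ingredients (Fa\`a di Bruno, norm equivalence on $\mathbb P_k(2\hat T)$, scaling of $\|\nabla_{\hat{\bm x}}\hat v_h\|_{L^p(\hat T)}$) are identical.
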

\begin{proof}
	We focus on the case $m \ge 1$; the case $m = 0$ can be treated similarly.
	For simplicity, we write $T := T_S$.
	By the H\"older inequality,
	\begin{equation} \label{eq1: estimate in Omega minus Omegah}
		\|\nabla^m \overline{v_{h, T}}\|_{L^p(\bm\pi(S, \delta_S)} \le \operatorname{meas}_d(\bm\pi(S, \delta_S))^{1/p} \|\nabla^m \overline{v_{h, T}}\|_{L^\infty(2T)}
			\le C (h_S^{d-1} \delta_S)^{1/p} \|\nabla^m \overline{v_{h, T}}\|_{L^\infty(2T)}.
	\end{equation}
	Transforming to the reference coordinate, we have
	\begin{align}
		\|\nabla_{\bm x}^m \overline{v_{h, T}}\|_{L^\infty(2T)} &\le C \sum_{l=1}^m h_S^{-l} \|\nabla_{\hat{\bm x}}^l \hat v_h\|_{L^\infty(2\hat T)}
			\le C h_S^{-m} \sum_{l=1}^m \|\nabla_{\hat{\bm x}}^l \hat v_h\|_{L^\infty(\hat T)}
			\le C h_S^{-m} \|\nabla_{\hat{\bm x}} \hat v_h\|_{L^p(\hat T)} \notag \\
			&\le C h_S^{1-m-d/p} \|\nabla_{\bm x} v_h\|_{L^p(T)}, \label{eq2: estimate in Omega minus Omegah}
	\end{align}
	where we note that $\|\cdot\|_{L^\infty(\hat T)}$ and $\|\cdot\|_{L^\infty(2\hat T)}$ define norms for polynomials and that all norms of a finite dimensional space are equivalent to each other.
	Now substitution of \eref{eq2: estimate in Omega minus Omegah} into \eref{eq1: estimate in Omega minus Omegah} proves the desired estimate.
\end{proof}

For $S \in \mathcal S_h$, we will make use of the following trace inequality:
\begin{equation} \label{eq: local trace estimate}
	\|v\|_{L^2(S)} \le Ch_S^{-1/2} \|v\|_{L^2(T_S)} + C \|v\|_{L^2(T_S)}^{1/2} \|\nabla v\|_{L^2(T_S)}^{1/2} \quad \forall v \in H^1(T_S),
\end{equation}
as well as the inverse inequality
\begin{equation*}
	\|\nabla^m v_h\|_{L^2(S)} \le Ch_S^{m-1/2} \|v_h\|_{H^m(T_S)} \quad \forall v_h \in V_h, \; m \ge 0.
\end{equation*}

% C.2
\subsection{$H^1$-error estimate in $\Omega$}
Let us prove that
\begin{equation*}
	\Big( \|\nabla(u - u_h)\|_{L^2(\Omega \cap \Omega_h)}^2 + \sum_{S \in \mathcal S_h} \|\nabla(u - \bar u_h)\|_{L^2(\bm\pi(S, \delta) \cap (\Omega \setminus \Omega_h))}^2 \Big)^{1/2} \le Ch^k \|u\|_{H^{k+1}(\Omega)}.
\end{equation*}
For this, by virtue of the $H^1(\Omega_h)$-error estimate in \tref{thm: H1 error estimate}, it suffices to show the following:

\begin{prop} \label{prop: H1 error estimate in Omega}
	Under the same assumptions as in \tref{thm: H1 error estimate} we have
	\begin{equation*}
		\Big( \sum_{S \in \mathcal S_h} \|\nabla(\tilde u - \bar u_h)\|_{L^2(\bm\pi(S, \delta_S) \setminus \Omega_h)}^2 \Big)
			\le Ch^{k+1}(\|\tilde u\|_{H^2(\Gamma(\delta))} + \|\tilde u\|_{H^{k+1}(\Omega_h)}) + Ch^{k/2} \|\nabla(\tilde u - u_h)\|_{L^2(\Omega_h)}.
	\end{equation*}
\end{prop}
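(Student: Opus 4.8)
The plan is to compare $\tilde u$ with $\bar u_h$ on each skin piece $\bm\pi(S,\delta_S)\setminus\Omega_h$ by inserting the extended Lagrange interpolant. Setting $w_h:=\mathcal I_h^L\tilde u\in V_h$ (legitimate since $k+1>d/2$ makes $\tilde u$ continuous), I would use, on $\bm\pi(S,\delta_S)\setminus\Omega_h$, the splitting
\[
\tilde u - \bar u_h = (\tilde u - \overline{w_h}) + \overline{(w_h - u_h)},
\]
where $\overline{\,\cdot\,}$ denotes the natural (element-wise extrapolated) extension. The second summand is the extension of a genuine $V_h$-function and is therefore controlled by \lref{lem: estimate in Omega minus Omegah}; the first is a pure consistency term measuring how well the extrapolated interpolant reproduces $\tilde u$ just outside $\Omega_h$.

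For the discrete term I would apply \lref{lem: estimate in Omega minus Omegah} with $m=1$, $p=2$, whose prefactor is $(\delta_S/h_S)^{1/2}=Ch_S^{k/2}$ because $\delta_S=Ch_S^{k+1}$. Splitting $w_h-u_h = (w_h-\tilde u)+(\tilde u - u_h)$, bounding the first part by the global interpolation estimate \eref{eq: global interpolation estimate in Omegah}, and summing over $S$, I obtain
\[
\Big(\sum_S\|\nabla\overline{(w_h-u_h)}\|_{L^2(\bm\pi(S,\delta_S))}^2\Big)^{1/2}\le Ch^{k/2}\big(Ch^k\|\tilde u\|_{H^{k+1}(\Omega_h)}+\|\nabla(\tilde u - u_h)\|_{L^2(\Omega_h)}\big),
\]
which already yields the claimed $Ch^{k/2}\|\nabla(\tilde u - u_h)\|_{L^2(\Omega_h)}$ together with a higher-order interpolation remainder.

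The consistency term is the delicate part and the main obstacle. Since $\overline{w_h}=w_h$ on $T_S$ and on $S$, I would treat $\bm G:=\nabla(\tilde u-\overline{w_h})$ as a field on $\bm\pi(S,\delta_S)\subset 2T_S$ and invoke the boundary-skin estimate \eref{eq2': boundary-skin estimates}, trading the small width $\delta_S$ against a trace on $S$ and a second-derivative bulk term:
\[
\|\bm G\|_{L^2(\bm\pi(S,\delta_S))}\le C\big(\delta_S^{1/2}\|\nabla(\tilde u-w_h)\|_{L^2(S)}+\delta_S\|\nabla^2(\tilde u-\overline{w_h})\|_{L^2(\bm\pi(S,\delta_S))}\big).
\]
The trace factor I would handle by the local trace inequality \eref{eq: local trace estimate} on $T_S$ combined with \eref{eq: Lagrange interpolation error estimate in T}, which gives $\|\nabla(\tilde u-w_h)\|_{L^2(S)}\le Ch_S^{k-1/2}\|\tilde u\|_{H^{k+1}(T_S)}$. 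In the bulk term I would write $\nabla^2(\tilde u-\overline{w_h})=\nabla^2\tilde u-\nabla^2\overline{w_h}$, estimate $\nabla^2\overline{w_h}$ by \lref{lem: estimate in Omega minus Omegah} with $m=2$, and sum the remaining $\delta_S\|\nabla^2\tilde u\|_{L^2(\bm\pi(S,\delta_S))}$ over $S$ into $\delta\,\|\nabla^2\tilde u\|_{L^2(\Gamma(\delta))}$, which is precisely the $Ch^{k+1}\|\tilde u\|_{H^2(\Gamma(\delta))}$ contribution.

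Finally I would collect the two groups of estimates, using $\delta=Ch^{k+1}$ and $h\le1$ to bound the residual interpolation remainders against the $Ch^{k+1}\|\tilde u\|_{H^{k+1}(\Omega_h)}$ term. The real difficulty, and the step I expect to be the obstacle, is the consistency term: the extrapolated polynomial $\overline{w_h}$ is expressed in the element coordinates of $\bm F_{T_S}$, whereas the skin $\bm\pi(S,\delta_S)$ is parametrized by the tubular-neighborhood map $\bm\Psi$, and one must reconcile the two geometries (through \lref{lem: estimate in Omega minus Omegah} for the extension and \eref{eq2': boundary-skin estimates} for the skin) while extracting the correct powers of $h_S$ from the thin width $\delta_S=Ch_S^{k+1}$ and respecting the reduced boundary regularity of $\tilde u|_{\Gamma_h}$.
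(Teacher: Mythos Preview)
Your proposal is correct and follows essentially the same route as the paper: the same insertion of the Lagrange interpolant $w_h=\mathcal I_h^L\tilde u$, the same application of \lref{lem: estimate in Omega minus Omegah} with $m=1$ to the discrete piece $\overline{w_h-u_h}$, and the same boundary-skin estimate plus local trace inequality for the consistency piece $\tilde u-\overline{w_h}$. Your worry about reconciling the element and tubular geometries is unfounded---the tools you named (\lref{lem: estimate in Omega minus Omegah} and the skin estimate) already absorb this, exactly as you outlined.
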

\begin{proof}
	Fixing an arbitrary $S \in \mathcal S_h$ and setting $v_h = \mathcal I_h^L \tilde u$, we start from
	\begin{align*}
		\|\nabla(\tilde u - \bar u_h)\|_{L^2(\bm\pi(S, \delta_S) \setminus \Omega_h)} &\le \|\nabla(\tilde u - \overline{u_{h, T_S}})\|_{L^2(\bm\pi(S, \delta_S))} \\
			&\le \|\nabla(\tilde u - \overline{v_{h, T_S}})\|_{L^2(\bm\pi(S, \delta_S))} + \|\nabla(\overline{v_{h, T_S}} - \overline{u_{h, T_S}})\|_{L^2(\bm\pi(S, \delta_S))}.
	\end{align*}
	We apply \eref{eq2: boundary-skin estimates} to get
	\begin{equation*}
		\|\nabla(\tilde u - \overline{v_{h, T_S}})\|_{L^2(\bm\pi(S, \delta_S))} \le
			C \delta_S^{1/2} \|\nabla( \tilde u - v_h )\|_{L^2(S)} + C \delta_S \|\nabla^2( \tilde u - \overline{v_{h, T_S}} )\|_{L^2(\bm\pi(S, \delta_S))}.
	\end{equation*}
	Because of \eref{eq: Lagrange interpolation error estimate on S} and \eref{eq: local trace estimate}, we get
	\begin{equation} \label{eq1: H1 error in Omega minus Omegah}
		\|\nabla(\tilde u - \bar v_h)\|_{L^2(S)} \le Ch_S^{k-1/2} \|\tilde u\|_{H^{k+1}(T_S)}.
	\end{equation}
	By \lref{lem: estimate in Omega minus Omegah},
	\begin{align}
		\|\nabla^2(\tilde u - \overline{v_{h, T_S}})\|_{L^2(\bm\pi(S, \delta_S))}
		&\le \|\nabla^2 \tilde u\|_{L^2(\bm\pi(S, \delta_S))} + \|\nabla^2 \overline{v_{h, T_S}}\|_{L^2(\bm\pi(S, \delta_S))} \notag \\
		&\le \|\nabla^2 \tilde u\|_{L^2(\bm\pi(S, \delta_S))} \notag
			+ C \Big( \frac{\delta_S}{h_S} \Big)^{1/2} h_S^{-1} \|\nabla v_h\|_{L^2(T_S)} \notag \\
		&\le \|\nabla^2 \tilde u\|_{L^2(\bm\pi(S, \delta_S))} + C \delta_S^{1/2} h_S^{-3/2} \|\tilde u\|_{H^{k+1}(T_S)}. \label{eq2: H1 error in Omega minus Omegah}
	\end{align}
	Finally, observe that
	\begin{align}
		\|\nabla(\overline{v_{h, T_S}} - \overline{u_{h, T_S}})\|_{L^2(\bm\pi(S, \delta_S))} &\le C \Big( \frac{\delta_S}{h_S} \Big)^{1/2} \|\nabla(v_h - u_h)\|_{L^2(T_S)} \notag \\
			&\le C \Big( \frac{\delta_S}{h_S} \Big)^{1/2} (h_S^k \|\tilde u\|_{H^{k+1}(T_S)} + \|\nabla(\tilde u - u_h)\|_{L^2(T_S)}). \label{eq3: H1 error in Omega minus Omegah}
	\end{align}
	Now we deduce from \eref{eq1: H1 error in Omega minus Omegah}--\eref{eq3: H1 error in Omega minus Omegah} that
	\begin{align*}
		\|\nabla(\tilde u - \bar u_h)\|_{L^2(\bm\pi(S, \delta_S) \setminus \Omega_h)} &\le C \Big( \frac{\delta_S}{h_S} \Big)^{1/2} (h_S^k \|\tilde u\|_{H^{k+1}(T_S)} + \|\nabla(\tilde u - u_h)\|_{L^2(T_S)}) \\
			&\qquad + C \delta_S \|\nabla^2 \tilde u\|_{L^2(\bm\pi(S, \delta_S))} + C \Big( \frac{\delta_S}{h_S} \Big)^{3/2} \|\tilde u\|_{H^{k+1}(T_S)} \\
		&\le Ch_S^{k+1}(\|\tilde u\|_{H^2(\bm\pi(S, \delta_S))} + \|\tilde u\|_{H^{k+1}(T_S)}) + Ch_S^{k/2} \|\nabla(\tilde u - u_h)\|_{L^2(T_S)}.
	\end{align*}
	Taking the square and the summation over $S \in \mathcal S_h$, we obtain the desired estimate.
\end{proof}

% C.3
\subsection{$H^1$-error estimate on $\Gamma$}
To establish
\begin{equation} \label{eq: global H1 error estimate on Gamma}
	\Big( \sum_{S \in \mathcal S_h} \|\nabla_\Gamma (u - \bar u_h)\|_{L^2(\bm\pi(S))}^2 \Big)^{1/2} \le Ch^k \|u\|_{H^{k+1}(\Omega; \Gamma)},
\end{equation}
we show the following estimate on each boundary mesh $S$.
\begin{prop} \label{prop: H1 error estimate on Gamma}
	In addition to the assumptions of \tref{thm: H1 error estimate} we assume $k > d/2$ so that $u \in H^{k+1}(\Omega) \hookrightarrow C^1(\overline\Omega)$.
	Then, for $S \in \mathcal S_h$ we have
	\begin{align*}
		\|\nabla_\Gamma (u - \bar u_h)\|_{L^2(\bm\pi(S))} &\le
			C \|\nabla_{\Gamma_h} (\tilde u - u_h) \|_{L^2(S)} + C \delta_S^{3/2} \|\tilde u\|_{H^3(\bm\pi(S, \delta_S))} \\
		&\qquad + C h_S^{(k-1)/2} \sum_{T_1 \in \mathcal T_h(T_S)} (h_S^k \|\tilde u\|_{H^{k+1}(T_1)} + \|\nabla (\tilde u - u_h)\|_{L^2(T_1)}),
	\end{align*}
	where $\mathcal T_h(T)$ means $\{T_1 \in \mathcal T \mid T_1 \cap T \neq \emptyset\}$ for $T \in \mathcal T_h$.
\end{prop}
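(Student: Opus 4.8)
The plan is to transport the surface gradient from $\bm\pi(S) \subset \Gamma$ back onto $S \subset \Gamma_h$ through the projection $\bm\pi$, and then to insert the bulk Lagrange interpolant $v_h := \mathcal I_h^L \tilde u$ so that every quantity that is $O(1)$ rather than $O(h^k)$ (namely $\nabla\tilde u$ and $\nabla u_h$) is replaced by a genuinely small one. Since $\bar u_h = \overline{u_{h,T_S}}$ on $\bm\pi(S)$, I would first apply \eref{eq2: equivalence of surface integrals} of \lref{lem: equivalence of Sobolev spaces on Gamma and Gammah} to get $\|\nabla_\Gamma (u - \bar u_h)\|_{L^2(\bm\pi(S))} \le C\|\nabla_{\Gamma_h}\big((u-\overline{u_{h,T_S}})\circ\bm\pi\big)\|_{L^2(S)}$. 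Using that $u = \tilde u$ on $\Gamma$ (hence $u\circ\bm\pi = \tilde u\circ\bm\pi$ on $S$) and that $\overline{w_{h,T_S}} = w_h$ on $S$ for every $w_h \in V_h$, a short manipulation gives, on $S$,
\[
	(u-\overline{u_{h,T_S}})\circ\bm\pi = (\tilde u - u_h) - (e - e\circ\bm\pi), \qquad e := \tilde u - \overline{u_{h,T_S}},
\]
and I then split $e = e_1 + e_2$ with $e_1 := \tilde u - \overline{v_{h,T_S}}$ the (extended) interpolation error and $e_2 := \overline{v_{h,T_S}} - \overline{u_{h,T_S}}$ the extension of the discrete difference.

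The term $\tilde u - u_h$ contributes exactly the main term $\|\nabla_{\Gamma_h}(\tilde u - u_h)\|_{L^2(S)}$. For the two corrections I apply \lref{lem: nablaGammah(v - vpi)} in the form \eref{eq1: conclusion of lemma which bounds nabla(v - v circ pi)}, bounding $\|\nabla_{\Gamma_h}(e_i - e_i\circ\bm\pi)\|_{L^2(S)}$ by $Ch_S^k\|\nabla e_i\|_{L^2(S)} + C\delta_S^{1/2}\|\nabla^2 e_i\|_{L^2(\bm\pi(S,\delta_S))}$. For $e_1$, the first-order part is controlled by the local trace inequality \eref{eq: local trace estimate} together with \eref{eq: Lagrange interpolation error estimate in T}, which give $\|\nabla e_1\|_{L^2(S)} \le Ch_S^{k-1/2}\|\tilde u\|_{H^{k+1}(T_S)}$ and hence a contribution of order $h_S^{2k-1/2}$; the second-order part is split as $\nabla^2 e_1 = \nabla^2\tilde u - \nabla^2\overline{v_{h,T_S}}$, where the smooth piece is processed by the boundary-skin estimate \eref{eq2: boundary-skin estimates} to produce precisely $\delta_S^{3/2}\|\tilde u\|_{H^3(\bm\pi(S,\delta_S))}$, while the polynomial piece is handled by a Bramble--Hilbert argument on the doubled element $2T_S$ (nodal interpolation extrapolated to $2\hat T$ is a bounded projection onto $\mathbb P_k$ that is exact on polynomials), giving $Ch_S^{k-1}\|\tilde u\|_{H^{k+1}(2T_S)}$ and thus order $h_S^{(3k-1)/2}$. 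For $e_2$, which is a piecewise polynomial, I would use the trace and inverse inequalities on $S$ together with \lref{lem: estimate in Omega minus Omegah} to bound both terms by $Ch_S^{k-1/2}\|\nabla(v_h - u_h)\|_{L^2(T_S)}$; splitting $v_h - u_h = (v_h - \tilde u) + (\tilde u - u_h)$ and using \eref{eq: Lagrange interpolation error estimate in T} then yields exactly the bulk interpolation and finite element error contributions in the $h_S^{(k-1)/2}\sum_{T_1}$ part of the claim.

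The main obstacle is to avoid two sources of loss. First, one must never allow the full gradients $\|\nabla\tilde u\|_{L^2(T_S)}$ or $\|\nabla u_h\|_{L^2(T_S)}$—which are $O(1)$, not $O(h^k)$—to survive; this is precisely why the insertion of $v_h$ and the systematic subtraction of the $(\,\cdot\,)\circ\bm\pi$ terms are essential, so that the leading factor $h_S^k$ in \lref{lem: nablaGammah(v - vpi)} always multiplies an $O(h^{k-1/2})$ interpolation or finite element error on $S$. Second, the surface trace of a bulk function generically costs a factor $h_S^{-1/2}$, which would destroy optimality; this is absorbed by combining that $h_S^k$ prefactor with the trace, and, for the smooth part, by replacing the naive trace with the extrapolation estimate on $2T_S$. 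The hypothesis $k > d/2$ enters to guarantee $\tilde u \in C^1(\overline\Omega)$, so that $\mathcal I_h^L\tilde u$ and its extrapolation to $2T_S$ are well defined and the residual lower-order surface norms of $\nabla\tilde u$ that arise along the way are pointwise controlled and of higher order. Finally, squaring the per-element bound and summing over $S \in \mathcal S_h$ (using $\#\mathcal S_h(S) \le C$ and $\delta_S = Ch_S^{k+1}$), then combining with \tref{thm: H1 error estimate}, yields the global estimate \eref{eq: global H1 error estimate on Gamma}, exactly in the spirit of the proof of \pref{prop: H1 error estimate in Omega}.
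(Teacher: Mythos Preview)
There is a genuine gap at the very first step. You write ``Since $\bar u_h = \overline{u_{h,T_S}}$ on $\bm\pi(S)$'', but this is false. By definition, $\bar u_h$ agrees with $u_h$ on $\Omega_h$ and equals the extrapolation $\overline{u_{h,T_S}}$ only on $\bm\pi(S,\delta)\setminus\Omega_h$. The curved patch $\bm\pi(S)$ generally crosses into neighboring bulk elements $T_1 \in \mathcal T_h(T_S)\setminus\{T_S\}$ inside $\Omega_h$, and there $\bar u_h = u_h|_{T_1}$, which does \emph{not} coincide with $\overline{u_{h,T_S}}$. The paper's proof therefore opens with the decomposition
\[
	\|\nabla_\Gamma(u-\bar u_h)\|_{L^2(\bm\pi(S))}^2
	= \|\nabla_\Gamma(u-\overline{u_{h,T_S}})\|_{L^2(\bm\pi(S)\cap(T_S\cup\Omega_h^c))}^2
	+ \sum_{T_1\neq T_S}\|\nabla_\Gamma(u-u_h)\|_{L^2(\bm\pi(S)\cap T_1)}^2,
\]
and the second group of terms is bounded using $\operatorname{meas}_{d-1}(\bm\pi(S)\cap T_1)\le Ch_S^{d-2}\delta_S$ together with $L^\infty$-control of $\nabla(\tilde u - v_h)$ and an inverse inequality for $\nabla(v_h-u_h)$. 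This is exactly the origin of the $\sum_{T_1\in\mathcal T_h(T_S)}$ in the claimed estimate, and it is where the hypothesis $k>d/2$ is actually used (so that $H^{k+1}\hookrightarrow C^1$ and the $L^\infty$-bound on $\nabla(\tilde u - \mathcal I_h^L\tilde u)$ is available). Your explanation that $k>d/2$ is needed merely ``so that $\mathcal I_h^L\tilde u$ and its extrapolation are well defined'' is not right; for that purpose $k+1>d/2$ already suffices.

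On the part $\bm\pi(S)\cap(T_S\cup\Omega_h^c)$ your strategy---transport via $\bm\pi$, insertion of $v_h=\mathcal I_h^L\tilde u$, and two applications of \lref{lem: nablaGammah(v - vpi)}---is essentially the paper's argument, with only a minor difference in how the $\nabla^2\overline{v_{h,T_S}}$ contribution is handled (you invoke a Bramble--Hilbert bound on the extrapolated interpolant over $2T_S$, whereas the paper iterates the boundary-skin estimate once more to pass to $\nabla^3$ and then applies \lref{lem: estimate in Omega minus Omegah}). But without the neighbor-element analysis above, your argument bounds the wrong quantity and cannot reproduce the $\sum_{T_1}$ term in the statement.
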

\begin{proof}
	Since $h_S$ is sufficiently small, we may assume $\bm\pi(S) \subset  \Omega_h^c \cup \bigcup \mathcal T_h(T)$ with $T : = T_S$.
	Therefore,
	\begin{align*}
		\|\nabla_\Gamma (u - \bar u_h)\|_{L^2(\bm\pi(S))} = \Big( \|\nabla_\Gamma( u - \overline{u_{h, T}} )\|_{L^2(\bm\pi(S) \cap (T \cup \Omega_h^c))}^2 +
			\sum_{T_1 \in \mathcal T_h(T) \setminus \{T\}} \|\nabla_\Gamma(u - u_h)\|_{L^2(\bm\pi(S) \cap T_1)}^2 \Big)^{1/2}.
	\end{align*}
	Setting $v_h = \mathcal I_h^L \tilde u$, we have
	\begin{align*}
		\|\nabla_\Gamma( u - \overline{u_{h, T}} )\|_{L^2(\bm\pi(S))} &\le C\|\nabla_{\Gamma_h} [(u - \overline{u_{h, T}}) \circ \bm\pi] \|_{L^2(S)} \\
		&\le C(\|\nabla_{\Gamma_h} (\tilde u - u_h) \|_{L^2(S)} + \|\nabla_{\Gamma_h} [(\tilde u - v_h) - (u - \overline{v_{h, T}}) \circ \bm\pi] \|_{L^2(S)} \\
		&\qquad + \|\nabla_{\Gamma_h} [(v_h - u_h) - (\overline{v_{h, T}} - \overline{u_{h, T}}) \circ \bm\pi] \|_{L^2(S)}).
	\end{align*}
	The second and third terms in the right-hand side are majorized by
	\begin{align*}
		&C h_S^k \|\nabla (\tilde u - v_h)\|_{L^2(S)} + C \delta_S^{1/2} \|\nabla^2 (\tilde u - \overline{v_{h, T}})\|_{ L^2(\bm\pi(S, \delta_S)) } \\
		\le \; & Ch_S^{2k-1/2} \|\tilde u\|_{H^{k+1}(T)} + C \delta_S \|\nabla^2 (\tilde u - v_h)\|_{L^2(S)}
			+ C \delta_S^{3/2} \|\nabla^3 (\tilde u - \overline{v_{h, T}})\|_{ L^2(\bm\pi(S, \delta_S)) } \\
		\le \; & C h_S^{2k-1/2} \|\tilde u\|_{H^{k+1}(T)} + C \delta_S^{3/2} \|\tilde u\|_{ H^3(\bm\pi(S, \delta_S)) } +
			C \delta_S^{3/2} \Big( \frac{\delta_S}{h_S} \Big)^{1/2} h_S^{-2} \|\nabla v_h\|_{L^2(T)} \\
		\le \; & C h_S^{2k-1/2} \|\tilde u\|_{H^{k+1}(T)} + C \delta_S^{3/2} \|\tilde u\|_{H^3(\bm\pi(S, \delta_S))},
	\end{align*}
	and by
	\begin{align*}
		&C h_S^k \|\nabla (v_h - u_h)\|_{L^2(S)} + C \delta_S^{1/2} \|\nabla^2 (\overline{v_{h, T}} - \overline{u_{h, T}})\|_{L^2(\bm\pi(S, \delta_S))} \\
			\le \; &Ch_S^{k-1/2} \|\nabla (v_h - u_h)\|_{L^2(T)} + C \delta_S^{1/2} \Big( \frac{\delta_S}{h_S} \Big)^{1/2} h_S^{-1} \|\nabla (v_h - u_h)\|_{L^2(T)} \\
			\le \; &Ch_S^{k-1/2} (h_S^k \|\tilde u\|_{H^{k+1}(T)} + \|\nabla (\tilde u - u_h)\|_{L^2(T)}),
	\end{align*}
	respectively. Therefore, we obtain
	\begin{align*}
		\|\nabla_\Gamma( u - \overline{u_{h, T}} )\|_{L^2(\bm\pi(S) \cap (T \cup \Omega_h^c))} &\le \|\nabla_{\Gamma_h} (\tilde u - u_h) \|_{L^2(S)} +
			C h_S^{2k-1/2} \|\tilde u\|_{H^{k+1}(T_S)} + C \delta_S^{3/2} \|\tilde u\|_{H^3(\bm\pi(S, \delta_S))} \\
		&\qquad + C h_S^{k-1/2} \|\nabla (\tilde u - u_h)\|_{L^2(T_S)}.
	\end{align*}
	
	Next, for $T_1 \in \mathcal T_h(T)$ such that $T_1 \neq T$, we have
	\begin{equation*}
		\|\nabla_\Gamma(u - u_h)\|_{L^2(\bm\pi(S) \cap T_1)} \le \|\nabla_\Gamma(u - v_h)\|_{L^2(\bm\pi(S) \cap T_1)} + \|\nabla_\Gamma(v_h - u_h)\|_{L^2(\bm\pi(S) \cap T_1)}.
	\end{equation*}
	We estimate the first and second terms by
	\begin{align*}
		\operatorname{meas}_{d-1}(\bm\pi(S) \cap T_1)^{1/2} \|\nabla(\tilde u - v_h)\|_{L^\infty(T_1)}
			\le C (h_S^{d-2} \delta_S)^{1/2} \cdot C h_{T_1}^{k-d/2} \|\tilde u\|_{H^{k+1}(T_1)}
			\le C h_S^{(3k-1)/2} \|\tilde u\|_{H^{k+1}(T_1)}
	\end{align*}
	(note that $h_{T_1} \le C h_S$ by the regularity of the meshes), and by
	\begin{align*}
		\operatorname{meas}_{d-1}(\bm\pi(S) \cap T_1)^{1/2} \|\nabla(v_h - u_h)\|_{L^\infty(T_1)} &\le C (h_S^{d-2} \delta_S)^{1/2} \cdot C h_{T_1}^{-d/2} \|\nabla(v_h - u_h)\|_{L^2(T_1)} \\
			&\le C h_S^{(k-1)/2} (h_{T_1}^k \|\tilde u\|_{H^{k+1}(T_1)} + \|\nabla(\tilde u - u_h)\|_{L^2(T_1)}).
	\end{align*}
	Combining the estimates above all together, we conclude the desired inequality.
\end{proof}
\begin{rem}	
	(i) Because $\#\mathcal T_h(T) \le C$ for all $T \in \mathcal T_h$ by the regularity of the meshes, we can deduce \eref{eq: global H1 error estimate on Gamma} from the proposition above and \tref{thm: H1 error estimate}.
	
	(ii) The assumption $k > d/2$ excludes the case $k = 1$ even for $d = 2, 3$.
	However, a similar argument using $\nabla^2 v_h \equiv 0$ for $k = 1$ on each element yields
	\begin{align*}
		\|\nabla_\Gamma (u - \bar u_h)\|_{L^2(\bm\pi(S))} &\le
			C \|\nabla_{\Gamma_h} (\tilde u - u_h) \|_{L^2(S)} + C \delta_S^{1/2} \|\tilde u\|_{H^2(\bm\pi(S, \delta_S))} \\
		&\qquad + C \sum_{T_1 \in \mathcal T_h(T_S)} (h_S |T_1|^{d/2} \|\tilde u\|_{W^{2,\infty}(T_1)} + \|\nabla (\tilde u - u_h)\|_{L^2(T_1)}),
	\end{align*}
	under the additional regularity assumption $u \in W^{2,\infty}(\Omega)$.
\end{rem}

% C.4
\subsection{Final result}
The final result including $L^2$-estimates is stated as follows.
\begin{thm}
	Let $k > d/2$.
	Assume that $f \in H^1(\Omega)$, $\tau \in H^{3/2}(\Gamma)$ if $k = 2$ and that $f \in H^{k-1}(\Omega)$, $\tau \in H^{k-1}(\Gamma)$ if $k \ge 3$.
	If $u$ and $u_h$ are the solutions of \eref{eq: continuous problem} and \eref{eq: FE scheme} respectively, then we have
	\begin{align*}
		\|\nabla (u - \bar u_h)\|_{L^2(\Omega)} + \|\nabla_\Gamma (u - \bar u_h)\|_{L^2(\Gamma)} &\le C_{f, \tau} h^k, \\
		\|u - \bar u_h\|_{L^2(\Omega)} + \|u - \bar u_h\|_{L^2(\Gamma)} &\le C_{f, \tau} h^{k+1},
	\end{align*}
	where $C_{f, \tau} := C (\|f\|_{H^{k-1}(\Omega)} + \|\tau\|_{H^{\max\{k-1, 3/2\}}(\Gamma)})$.
\end{thm}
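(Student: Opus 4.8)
The plan is to derive all four quantities from the estimates already proved in the approximate domain—the $H^1$-bound of \tref{thm: H1 error estimate} and the $L^2$-bound of \tref{thm: L2 error estimate}—combined with the two propositions of this appendix and the boundary-skin machinery of Subsection 2.4, interpreting every norm involving $\bar u_h$ in the sense of \rref{rem: abuse of notation}. I would dispose of the gradient estimates first, since they are essentially immediate. For $\|\nabla(u-\bar u_h)\|_{L^2(\Omega)}$, the part over $\Omega\cap\Omega_h$ equals $\|\nabla(\tilde u-u_h)\|_{L^2(\Omega\cap\Omega_h)}\le\|\nabla(\tilde u-u_h)\|_{L^2(\Omega_h)}\le C_{f,\tau}h^k$ by \tref{thm: H1 error estimate} (using $u=\tilde u$ and $\bar u_h=u_h$ inside $\Omega\cap\Omega_h$), while the boundary-skin part is bounded by \pref{prop: H1 error estimate in Omega} as $C_{f,\tau}(h^{k+1}+h^{3k/2})$; since $d\ge 2$ forces $k>d/2\ge 1$, i.e.\ $k\ge 2$, this is $\le C_{f,\tau}h^k$. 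The surface term $\|\nabla_\Gamma(u-\bar u_h)\|_{L^2(\Gamma)}\le C_{f,\tau}h^k$ is exactly \eref{eq: global H1 error estimate on Gamma}, which follows by summing \pref{prop: H1 error estimate on Gamma} over $S$, invoking \tref{thm: H1 error estimate}, and using the regularity bound $\|u\|_{H^{k+1}(\Omega;\Gamma)}\le C_{f,\tau}$ (note $H^{3/2}\hookrightarrow H^1$ covers the case $k=2$).

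For the $L^2(\Omega)$-estimate I would split $\Omega=(\Omega\cap\Omega_h)\cup(\Omega\setminus\Omega_h)$. On $\Omega\cap\Omega_h$ one has $u-\bar u_h=\tilde u-u_h$, so this contribution is $\le\|\tilde u-u_h\|_{L^2(\Omega_h)}\le C_{f,\tau}h^{k+1}$ by \tref{thm: L2 error estimate}. On each skin piece I would exploit the identity $\overline{u_{h,T_S}}|_S=u_h|_S$ and apply the $S$-based skin estimate \eref{eq2': boundary-skin estimates} to $v:=\tilde u-\overline{u_{h,T_S}}$:
\[
\|\tilde u-\overline{u_{h,T_S}}\|_{L^2(\bm\pi(S,\delta_S))}\le C\delta_S^{1/2}\|\tilde u-u_h\|_{L^2(S)}+C\delta_S\|\nabla(\tilde u-\overline{u_{h,T_S}})\|_{L^2(\bm\pi(S,\delta_S))}.
\]
Squaring and summing the first term and using $\|\tilde u-u_h\|_{L^2(\Gamma_h)}\le C_{f,\tau}h^{k+1}$ (again \tref{thm: L2 error estimate}) gives $C_{f,\tau}h^{(3k+3)/2}$, while the second term—whose gradient factor is controlled exactly by the building blocks in the proof of \pref{prop: H1 error estimate in Omega}—carries the extra power $\delta_S\sim h^{k+1}$ and is of order $h^{2k+2}$. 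Both skin contributions are therefore $o(h^{k+1})$, so $\|u-\bar u_h\|_{L^2(\Omega)}\le C_{f,\tau}h^{k+1}$.

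For the $L^2(\Gamma)$-estimate I would argue elementwise on $\bm\pi(S)\subset\Gamma$, where $u=\tilde u$. With $v:=\tilde u-\overline{u_{h,T_S}}$ and $\overline{u_{h,T_S}}\circ\bm\pi^*=u_h\circ\bm\pi^*$ on $S$, the triangle inequality together with the surface equivalence \eref{eq1: equivalence of surface integrals} and the $\bm\pi^*$-version of \eref{eq3: boundary-skin estimates} (namely $\|v\circ\bm\pi^*-v\|_{L^2(\bm\pi(S))}\le C\delta_S^{1/2}\|\nabla v\|_{L^2(\bm\pi(S,\delta_S))}$) yields
\[
\|v\|_{L^2(\bm\pi(S))}\le\|v\circ\bm\pi^*\|_{L^2(\bm\pi(S))}+\|v-v\circ\bm\pi^*\|_{L^2(\bm\pi(S))}\le C\|\tilde u-u_h\|_{L^2(S)}+C\delta_S^{1/2}\|\nabla v\|_{L^2(\bm\pi(S,\delta_S))}.
\]
Summing over $S$, the first term gives $C\|\tilde u-u_h\|_{L^2(\Gamma_h)}\le C_{f,\tau}h^{k+1}$, while the second is of higher order by the same gradient bound used above, so $\|u-\bar u_h\|_{L^2(\Gamma)}\le C_{f,\tau}h^{k+1}$, completing the theorem.

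The main obstacle is the boundary-skin layer $\Omega\setminus\Omega_h$ coupled with the fact that $\bar u_h$ is only defined piecewise and is generally discontinuous across the lateral faces, so that all norms must be read in the elementwise sense of \rref{rem: abuse of notation}. The crucial structural fact that makes the argument work is $\overline{u_{h,T_S}}|_S=u_h|_S$, which lets the sharp $L^2(\Gamma_h)$-estimate of \tref{thm: L2 error estimate} be transported both to $\Gamma$ and into the skin; the remaining delicate bookkeeping is merely to confirm that every skin term genuinely gains the half-power (or full power) of $\delta_S\sim h^{k+1}$ needed to remain of order $h^{k+1}$, which holds precisely because $k\ge 2$ here.
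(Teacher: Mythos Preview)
Your proof is correct and follows essentially the same route as the paper: the $H^1$-estimates are read off from Propositions~\ref{prop: H1 error estimate in Omega} and~\ref{prop: H1 error estimate on Gamma} combined with \tref{thm: H1 error estimate}, and the $L^2$-estimates are obtained by transporting the $L^2(\Omega_h;\Gamma_h)$-bound of \tref{thm: L2 error estimate} across the boundary skin via \eref{eq2': boundary-skin estimates} and the $\bm\pi^*$-variant of \eref{eq3: boundary-skin estimates}, exactly as the paper does (it writes the skin estimate globally on $\Gamma(\delta)$ while you work elementwise on $\bm\pi(S,\delta_S)$, a purely cosmetic difference). Two harmless slips: the second skin term is $O(\delta h^k)=O(h^{2k+1})$ rather than $O(h^{2k+2})$, and ``$\overline{u_{h,T_S}}\circ\bm\pi^*=u_h\circ\bm\pi^*$ on $S$'' should read ``on $\bm\pi(S)$''.
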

\begin{proof}
	The $H^1$-estimates are already shown by Propositions \ref{prop: H1 error estimate in Omega} and \ref{prop: H1 error estimate on Gamma}.
	The $L^2(\Omega)$-estimate follows from
	\begin{equation*}
		\|\tilde u - \bar u_h\|_{L^2(\Gamma(\delta))} \le C \delta^{1/2} \|\tilde u - u_h\|_{L^2(\Gamma_h)} + C \delta \|\nabla (\tilde u - \bar u_h)\|_{L^2(\Gamma(\delta))}
			\le C_{f, \tau} (\delta^{1/2} h^{k+1} + \delta h^k),
	\end{equation*}
	where we have used the result of \tref{thm: L2 error estimate}.
	For the $L^2(\Gamma)$-estimate, we see that
	\begin{align*}
		\|u - \bar u_h\|_{L^2(\Gamma)} &\le C \|(u - \bar u_h) \circ \bm\pi\|_{L^2(\Gamma_h)} \le
			C \|\tilde u - u_h\|_{L^2(\Gamma_h)} + C \|(\tilde u - u_h) - (u - \bar u_h) \circ \bm\pi\|_{L^2(\Gamma_h)} \\
		&\le C_{f, \tau} h^{k+1} + C \delta^{1/2} \|\nabla (\tilde u - \bar u_h)\|_{L^2(\Gamma(\delta))} \\
		&\le C_{f, \tau} (h^{k+1} + \delta^{1/2} h^k),
	\end{align*}
	where we have used \pref{prop: H1 error estimate in Omega} together with \tref{thm: H1 error estimate} in the last line.
\end{proof}
% EOD
\end{document}